\numberwithin{equation}{section}
\theoremstyle{plain}
\newtheorem{theorem}{Theorem}[section]
\newtheorem{proposition}[theorem]{Proposition}
\newtheorem{lemma}[theorem]{Lemma}
\newtheorem{corollary}[theorem]{Corollary}
\newtheorem{conjecture}[theorem]{Conjecture}
\newtheorem{claim}[theorem]{Claim}
\theoremstyle{remark}
\theoremstyle{definition}
\newtheorem{definition}[theorem]{Definition}
\newcommand{\C}{{\mathcal C}}
\newcommand{\D}{{\mathcal D}}
\newcommand{\E}{{\mathcal E}}
\newcommand{\eps}{\varepsilon}
\begin{document}

\title[On uniqueness and nonuniqueness of ancient ovals]{On uniqueness and nonuniqueness of ancient ovals}

\author{Wenkui Du, Robert Haslhofer}

\begin{abstract}
In this paper, we prove that any nontrivial $\mathrm{SO}(k )\times \mathrm{SO}(n+1-k)$-symmetric ancient compact noncollapsed solution of the mean curvature flow agrees up to scaling and rigid motion with the $\mathrm{O}(k)\times \mathrm{O}(n+1-k)$-symmetric ancient ovals constructed in \cite{HaslhoferHershkovits_ancient}. This confirms 
 a conjecture by Angenent-Daskalopoulos-Sesum. On the other hand, for every $k\geq 2$ we also construct a $(k-1)$-parameter family of uniformly $(k+1)$-convex ancient ovals that are only $\mathbb{Z}^{k}_{2}\times \mathrm{O}(n+1-k)$-symmetric. This gives counterexamples to a conjecture of Daskalopoulos.
 \end{abstract}

\maketitle

\tableofcontents

\section{Introduction}

An ancient oval is an ancient compact noncollapsed mean curvature flow that is not self-similar. We recall that a mean curvature flow $M_t$ is called ancient if it is defined for all $t\ll 0$, and noncollapsed if it is mean-convex and there exists a constant $\alpha>0$ such that at each point $p\in M_t$ admits interior and exterior balls of radius at least $\alpha/H(p)$, c.f. \cite{ShengWang,Andrews_noncollapsing,HaslhoferKleiner_meanconvex}. The existence of ancient ovals has been proved first by White \cite{White_nature}. Later, Hershkovits and the second author carried out White's construction in more detail \cite{HaslhoferHershkovits_ancient}, which in particular yielded $\mathrm{O}(k)\times \mathrm{O}(n+1-k)$-symmetric ancient ovals for every $1\leq k\leq n$. Moreover, Angenent predicted based on formal matched asymptotics \cite{Angenent_formal}, that for $t\to -\infty$ such ovals should look like small perturbations of ellipsoids with $k$ long axes of length $\sqrt{2|t|\log|t|}$ and $n-k$ short axes of length $\sqrt{2(n-k)|t|}$.

Ancient ovals play an important role as potential singularity models in mean-convex flows \cite{White_size,White_nature,HaslhoferKleiner_meanconvex}. Moreover, they appear in the canonical neighborhood theorem, which is crucial for the construction of flows with surgery \cite{HuiskenSinestrari_surgery,BH_surgery,HaslhoferKleiner_surgery}. Related to this, they  also appeared in the recent proof of the mean-convex neighborhood conjecture \cite{CHH,CHHW}. Furthermore, ancient ovals are also tightly related to the fine structure of singularities \cite{CM_arrival,CHH_ovals}, including questions about accumulation of neckpinch singularities and finiteness of singular times.

In a recent breakthrough \cite{ADS1,ADS2}, Angenent-Daskalopoulos-Sesum proved a uniqueness theorem for uniformly two-convex ancient ovals:

\begin{theorem}[{Angenent-Daskalopoulos-Sesum}]\label{thm_two_conv}
Uniformly two-convex ancient ovals are unique up to rigid motion, time-shift and parabolic dilation.
\end{theorem}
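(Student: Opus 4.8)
The plan is to follow a two-step scheme in the spirit of the work of Angenent-Daskalopoulos-Sesum \cite{ADS1,ADS2}: first pin down the precise asymptotics of an \emph{arbitrary} uniformly two-convex ancient oval as $t\to-\infty$, and then prove a rigidity statement to the effect that two such solutions with matching asymptotics coincide after normalization. Throughout one normalizes by a parabolic dilation, a time-shift and a rigid motion, and passes to the rescaled flow $\bar M_{\tau}=e^{\tau/2}\,M_{-e^{-\tau}}$. Combining Huisken's monotonicity formula with the noncollapsing hypothesis and Huisken's classification of mean-convex self-shrinkers, the tangent flow at $\tau=-\infty$ must be a round shrinking cylinder $\mathbb R\times S^{n-1}$: the splittings $\mathbb R^{j}\times S^{n-j}$ with $j\ge2$ violate uniform two-convexity, the shrinking plane is incompatible with compactness, and the shrinking sphere is excluded by a stability analysis near the sphere together with the non-self-similarity built into the definition of an oval. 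Hence on the \emph{neck region} $\bar M_{\tau}$ is, for $\tau\ll0$, a graph $v(y,\omega,\tau)$ over a long cylinder with $v\to\sqrt{2(n-1)}$, consistent with Angenent's predicted ellipsoid \cite{Angenent_formal}.

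\emph{Sharp cylindrical asymptotics.} The linearization of the rescaled flow at the cylinder is the Ornstein-Uhlenbeck-type operator $\mathcal L=\partial_{y}^{2}-\tfrac{y}{2}\,\partial_{y}+1+\tfrac{1}{2(n-1)}\Delta_{S^{n-1}}$ on the Gaussian-weighted $L^{2}$-space, with eigenfunctions $h_{j}(y)\,Y_{\ell}(\omega)$ (Hermite polynomials times spherical harmonics) and eigenvalues $1-\tfrac{j}{2}-\tfrac{\ell(\ell+n-2)}{2(n-1)}$. The nonnegative eigenvalues occur only for $(j,\ell)\in\{(0,0),(1,0),(0,1),(2,0),(1,1)\}$, and every one of these modes except $y^{2}-2$ is a Jacobi field generated by the symmetry group of the problem (Euclidean motions and parabolic rescaling of the oval), hence is eliminated by the normalization. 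The Merle-Zaag ODE lemma then forces the rotationally symmetric neutral mode $y^{2}-2$ to dominate, yielding $v(y,\omega,\tau)=\sqrt{2(n-1)}\,\bigl(1-\tfrac{y^{2}-2}{4|\tau|}\bigr)+o(|\tau|^{-1})$ uniformly on compact $y$-sets, with a \emph{universal} leading coefficient. One then propagates this into the \emph{parabolic region} $|y|\sim\sqrt{|\tau|}$ by constructing matching sub- and supersolutions, and into the two \emph{tip regions} by rescaling around a tip by its curvature and invoking the classification of noncollapsed ancient flows to identify the blow-up limit with the rotationally symmetric bowl translator, together with a quantitative convergence rate.

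\emph{Rigidity.} Given two uniformly two-convex ancient ovals $M^{1}_{t},M^{2}_{t}$ in the above normal form, set $w=v_{1}-v_{2}$ on the common graphical neck; then $w$ solves a linear parabolic equation whose spatial part is, to leading order, $\mathcal L$, and since the shared asymptotics force $w\to0$ the Merle-Zaag alternative applies again: either the projection of $w$ onto the positive-eigenvalue and tilt modes dominates, or its $y^{2}-2$ component does. The first case is impossible, because those modes are exactly the infinitesimal Euclidean motions and rescalings, so a leading-order contribution from them would mean that $M^{1}$ and $M^{2}$ differ at leading order by such a transformation, contradicting that both are already normalized. In the remaining case $w\simeq a(\tau)\,(y^{2}-2)$, but the leading term of each $a_{i}$ is the \emph{same} universal constant from the previous step, so $a=a_{1}-a_{2}=o(|\tau|^{-1})$; feeding this back into the ODE for $a$ improves the rate, and iterating shows that $w$ decays faster than any power of $|\tau|$. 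A weighted energy and backward-uniqueness estimate for the difference of the two flows --- carried out on the neck, the collar and the tips, the tip terms being controlled via the bowl-soliton asymptotics and uniform two-convexity --- then upgrades this to $w\equiv0$; unique continuation together with rigidity of the bowl translator at the tips gives $M^{1}_{t}=M^{2}_{t}$ for all $t$, which is the asserted uniqueness up to rigid motion, time-shift and parabolic dilation.

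\emph{Main difficulty.} The technical heart of the argument is the matching across regions. The linearized analysis and the Merle-Zaag mechanism live on the neck, where the cylinder is the model, whereas near the tips the only available model is the bowl translator; one must produce sharp two-sided barriers interpolating across the intermediate parabolic/collar region and then close the weighted energy estimate for the difference \emph{despite} the non-compactness of the neck in the rescaled picture and the error terms contributed by the collar. This is also exactly where uniform two-convexity enters: it forces the neck to be modeled on $\mathbb R\times S^{n-1}$ rather than on a higher-dimensional splitting, and it pins down the controlled, rotationally symmetric bowl geometry at the tips that the estimates require.
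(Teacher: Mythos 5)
First, be aware that the paper does not prove Theorem \ref{thm_two_conv} at all: it is quoted from Angenent--Daskalopoulos--Sesum, and the introduction only records the three steps of their argument: (i) unique asymptotics for $\mathrm{SO}(n)$-symmetric ancient ovals \cite{ADS1}; (ii) every uniformly two-convex ancient oval is $\mathrm{SO}(n)$-symmetric \cite[Section 2]{ADS2}; (iii) upgrading the unique asymptotics to uniqueness \cite[Sections 3--8]{ADS2}. Your proposal reproduces (i) and (iii) in outline (and in a way broadly consistent with how Sections \ref{sec_sharp_asym}--\ref{sec_uniqueness} of this paper adapt them), but it effectively erases step (ii), and that is a genuine gap.

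You dispose of the non-rotationally-symmetric unstable and neutral modes $h_j(y)Y_\ell(\omega)$, $(j,\ell)\in\{(1,0),(0,1),(1,1)\}$, by declaring them Jacobi fields generated by the symmetry group, ``hence eliminated by the normalization.'' That does not work as stated. A rigid motion, time-shift and parabolic dilation provide only finitely many parameters fixed once and for all; they can be used (as in the degree-theory argument of \cite[Section 4]{ADS2}, or Proposition \ref{+0} here) to annihilate the projections of the \emph{difference of two solutions} onto these modes at a single time $\tau_0$, but they cannot force an individual, possibly non-symmetric solution to have vanishing tilt component for all $\tau\ll 0$. In particular the tilt modes $yY_1(\omega)$ have eigenvalue zero, so the Merle--Zaag alternative only says the neutral eigenspace dominates, which a priori is a combination of $y^2-2$ and the tilts; showing that the rotationally invariant part wins --- equivalently, that the oval is exactly $\mathrm{SO}(n)$-symmetric --- is the content of \cite[Section 2]{ADS2}, proved by a quantitative Brendle--Choi-type neck-improvement iteration, not by normalization. (This is precisely why the present paper must \emph{assume} $\mathrm{SO}(k)\times\mathrm{SO}(n+1-k)$-symmetry, and why its counterexamples show the symmetry step cannot be generalized for $k\geq 2$.) Without step (ii) you have no rotationally symmetric profile function $u(y,\tau)$, so the intermediate-region barriers, the tip-region bowl analysis and the weighted energy estimates for the difference --- all formulated for profiles of symmetric solutions, both in \cite{ADS1,ADS2} and here --- do not literally apply. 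The same over-optimism recurs in your rigidity step, where you argue that the positive and tilt modes of $w=v_1-v_2$ cannot dominate ``because both solutions are already normalized'': the correct mechanism is to choose the free parameters at a fixed time so that the corresponding projections vanish and then run the coercivity/energy argument; being ``already normalized'' by itself rules out nothing. Aside from this, your sketch of the asymptotics and of the energy-based rigidity (cylindrical, collar and tip regions) is in line with the cited proof, though the claimed intermediate step of super-polynomial decay plus backward uniqueness is not how \cite{ADS2} closes the argument --- there the energy estimates and the ODE for the neutral coefficient directly force the difference to vanish identically.
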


We recall that a mean-convex flow is called uniformly two-convex if there exists a constant $\beta>0$ such that $\lambda_1+\lambda_2\geq \beta H$, where $\lambda_1$ and $\lambda_2$ denote the smallest two principal curvatures. Regarding the general case without two-convexity assumption, there are the following two conjectures:

\begin{conjecture}[{Angenent-Daskalopoulos-Sesum}]\label{conj1}
$\mathrm{O}(k)\times \mathrm{O}(n+1-k)$-symmetric ancient ovals are unique up to time-shift and parabolic dilation.
\end{conjecture}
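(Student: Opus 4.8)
The plan is to establish Conjecture~\ref{conj1} by following the strategy that Angenent-Daskalopoulos-Sesum developed for Theorem~\ref{thm_two_conv}: first prove \emph{rigid fine asymptotics} of the solution as $t\to-\infty$, and then run a uniqueness argument comparing any two solutions having these asymptotics. I would start by fixing an $\mathrm{O}(k)\times\mathrm{O}(n+1-k)$-symmetric ancient oval $M_t$ and writing it, using the symmetry, as a rotationally-symmetric graph $M_t=\{(x,y)\in\mathbb{R}^{k}\times\mathbb{R}^{n+1-k}:\abs{y}=u(\abs{x},t)\}$ with profile $u(r,t)$ on a domain $\{r\leq R(t)\}$, $u(R(t),t)=0$. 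Next, invoking the classification of ancient noncollapsed flows together with the symmetry and the fact that $M_t$, being an ancient oval, is not self-similar, I would show that the renormalized flow $\bar M_\tau:=e^{\tau/2}M_{-e^{-\tau}}$, $\tau=-\log(-t)$, converges as $\tau\to-\infty$ to the round shrinking cylinder $\mathbb{R}^{k}\times S^{n-k}(\sqrt{2(n-k)})$.

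The heart of the matter is the analysis in the cylindrical region $\{r=O(\sqrt{\abs{\tau}})\}$. Writing $\bar u(r,\tau)=\sqrt{2(n-k)}+w(r,\tau)$, one finds that $w$ solves, to leading order, $\partial_\tau w=\mathcal{L}w+Q(w)$, where $\mathcal{L}=\partial_r^2+\tfrac{k-1}{r}\partial_r-\tfrac r2\partial_r+1$ is the (radial) Ornstein-Uhlenbeck operator on $\mathbb{R}^{k}$ with Gaussian weight and $Q$ is quadratic. The radial spectrum of $\mathcal{L}$ is $\{1,0,-1,-2,\dots\}$, so there is exactly one unstable eigenvalue (the constant mode) and exactly one neutral eigenvalue $0$, with eigenfunction $\psi(r)=r^{2}-2k$. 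I would then run a Merle-Zaag-type ODE dichotomy to show that as $\tau\to-\infty$ either the unstable or the neutral part of $w$ dominates, and rule out the unstable alternative since it is incompatible with $M_t$ being a compact ancient oval (it would force $M_t$ to be a round shrinking cylinder), leaving $\psi$ as the unique dominant term. A nonlinear analysis of $Q$, in which the geometry of the cross-section $S^{n-k}$ enters, should then pin down the coefficient and yield the universal asymptotics $w(r,\tau)=-\tfrac{c_{n,k}}{\abs{\tau}}(r^{2}-2k)+o(\abs{\tau}^{-1})$ for an explicit $c_{n,k}>0$ independent of the oval; in particular the long axes have length $\sim\sqrt{2\abs{t}\log\abs{t}}$, confirming Angenent's prediction.

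I would then propagate this rigidity to the remaining regions: in the intermediate region $\sqrt{\abs{\tau}}\ll r\leq R$ the cylindrical estimate is converted into sharp control on $\bar u$, and in the tip region one zooms in near the edge $\{\abs{x}=R(t),\,y=0\}$ — which is now a whole $(k-1)$-sphere rather than a pair of points — and uses noncollapsedness to show that the curvature-rescaled flow converges to $\mathbb{R}^{k-1}\times(\text{the rotationally symmetric bowl soliton in }\mathbb{R}^{n+2-k})$, with a uniqueness statement for this soliton giving sharp asymptotics along the whole edge. Matching the cylindrical, intermediate and tip estimates shows that $\bar M_\tau$ converges, at every scale as $\tau\to-\infty$, to a limit profile that is determined up to the parabolic dilation and time-shift implicit in the normalization.

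For the uniqueness statement itself, given two $\mathrm{O}(k)\times\mathrm{O}(n+1-k)$-symmetric ancient ovals $M^{1}_t,M^{2}_t$, I would normalize both so that their asymptotics agree, set $W:=\bar u^{1}-\bar u^{2}$, observe that in the cylindrical region $W$ solves a linear parabolic equation governed by $\mathcal{L}$ while the matching estimates make $W$ small in the intermediate and tip regions, project $W$ onto the unstable, neutral and stable parts of $\mathcal{L}$, and invoke the Merle-Zaag dichotomy once more to see that the neutral projection must dominate. But the universal asymptotics force the neutral projections of $\bar u^{1}$ and $\bar u^{2}$ to agree to subleading order, so the neutral part of $W$ is $o(\abs{\tau}^{-1})$, which is incompatible with the neutral ODE unless $W\equiv 0$ in the cylindrical region; backward uniqueness for the flow then gives $M^{1}_t=M^{2}_t$. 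The hardest part will be twofold: the nonlinear computation pinning down $c_{n,k}$, which for general $k$ involves a subtler coupling between the $\mathbb{R}^{k}$-profile and the $S^{n-k}$ cross-section than in the two-convex case, and the tip analysis, where the tip being a $(k-1)$-sphere requires genuinely new input to establish the bowl-soliton limit, its uniqueness, and estimates uniform along the edge; these then feed the final uniqueness argument, which, as in Angenent-Daskalopoulos-Sesum, is itself a delicate simultaneous weighted estimate across all three regions.
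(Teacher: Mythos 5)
Your first half (sharp asymptotics) follows essentially the paper's route: renormalize, expand $u-\sqrt{2(n-k)}$ with the radial Ornstein--Uhlenbeck operator, run the Merle--Zaag dichotomy, kill the unstable mode by symmetry, solve the ODE for the neutral coefficient to get the universal $-\tfrac{\sqrt{2(n-k)}}{4|\tau|}(\rho^2-2k)$ profile, use barriers for the intermediate region, and obtain the tip asymptotics by noting the blow-up at the edge splits off $k-1$ lines and invoking Brendle--Choi. That part is sound.

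The genuine gap is in your uniqueness step. Your plan — ``normalize both so their asymptotics agree, apply Merle--Zaag to $W=\bar u^1-\bar u^2$, and conclude the neutral part of $W$ is $o(|\tau|^{-1})$, hence $W\equiv 0$'' — fails as stated. The sharp asymptotics are \emph{universal}: a solution and a slight time-translate or parabolic dilation of itself already share them to the stated order, yet their difference is nonzero; so agreement of asymptotics to subleading order cannot by itself force $W\equiv 0$, and no amount of ``matching'' at $\tau\to-\infty$ fixes the two free parameters. What is actually needed (and what the paper does) is: (i) choose the time-shift $\beta$ and dilation $\gamma$ at a \emph{finite} time $\tau_0$, via a degree-theory/winding-number argument, so that the truncated difference is orthogonal to both the unstable eigenfunction $1$ and the neutral eigenfunction $\rho^2-2k$ at $\tau_0$; and (ii) close a coupled system of weighted energy estimates — one in the cylindrical region in Gaussian norms, one in the tip region with a degenerate weight $\mu$ built from $Y_1$, using a Poincar\'e inequality there, an almost-Gaussian collar estimate, and a priori estimates (quadratic concavity $(u^2)_{\rho\rho}\le 0$ and a cylindrical estimate) — linked by equivalence of norms in the transition region $\{\theta/2\le u_1\le\theta\}$. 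Your remark that the tip and intermediate regions are handled because ``matching estimates make $W$ small'' is not enough: the difference equation has error terms supported where $u_1$ degenerates, so a quantitative bound of the form $\|W_{\mathcal T}\|\le \tfrac{C}{|\tau_0|}\|W\chi_{[\theta,2\theta]}\|$ is required to feed back into the cylindrical estimate; merely qualitative smallness does not close the loop, and Merle--Zaag applied to $W$ presupposes exactly this global error control. Finally, the ODE argument for the neutral coefficient of $W$ works because the parameter choice forces it to vanish at $\tau_0$, not because the asymptotics pin it down; and the conclusion is then propagated to all times by forward uniqueness of the flow, not by backward uniqueness.
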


\begin{conjecture}[{Daskalopoulos}]\label{conj2}
Every ancient oval is $\mathrm{O}(k)\times \mathrm{O}(n+1-k)$-symmetric for some $k$.
\end{conjecture}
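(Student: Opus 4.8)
Since the abstract already states that Conjecture \ref{conj2} is \emph{false}, what I would actually attempt is a construction of counterexamples: for each $k\geq 2$, a family of ancient ovals modelled on the shrinking bubble-sheet $\mathbb{R}^k\times S^{n-k}$ whose symmetry group is only $\mathbb{Z}_2^k\times\mathrm{O}(n+1-k)$. One's first instinct might instead be to \emph{prove} the conjecture by a moving-planes or symmetrisation argument, but the asymptotics quoted above suggest this is hopeless: as $t\to-\infty$ the $k$ long directions of the oval degenerate almost independently, and nothing forces their lengths to coincide. The plan is to prescribe the ratios of these lengths freely.

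\textbf{Heuristic picture.} For $t\to-\infty$ and after parabolic rescaling, the ovals of \cite{HaslhoferHershkovits_ancient} are small graphs over the static bubble-sheet $\mathbb{R}^k\times S^{n-k}(\sqrt{2(n-k)})$, and by Angenent's asymptotics the graph is close to an ellipsoid with all $k$ long semi-axes of length $\sqrt{2|t|\log|t|}$. Prescribing instead a fixed ratio between the semi-axes should break the rotational symmetry of the $\mathbb{R}^k$-factor to the reflection group $\mathbb{Z}_2^k$ while leaving the $\mathrm{O}(n+1-k)$-symmetry of the $S^{n-k}$-factor intact; the semi-axis ratios then range over a $(k-1)$-dimensional parameter space, which collapses to a point for $k=1$, consistent with Theorem \ref{thm_two_conv}.

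\textbf{Linearised analysis on the bubble-sheet.} Write the rescaled flow in the bubble-sheet region as a graph $u(y,\omega,\tau)$ over $\mathbb{R}^k\times S^{n-k}$. The $S^{n-k}$-modes and higher $y$-modes are strictly stable, so the slow dynamics are governed by the Ornstein--Uhlenbeck operator $\mathcal{L}=\Delta_y-\tfrac12\,y\cdot\nabla_y+1$ on $\mathbb{R}^k$, whose nonnegative spectrum is spanned by the unstable modes $1,y_1,\dots,y_k$ (corresponding to time- and space-translations, which are normalised away) and the \emph{neutral} modes $y_iy_j-2\delta_{ij}$. The neutral modes assemble into a symmetric matrix $B(\tau)$, and a Merle--Zaag-type dichotomy together with the quadratic correction to the profile equation yields, schematically, $u\approx\sqrt{2(n-k)}\bigl(1-\tfrac{1}{|\tau|}\langle B(\tau)y,y\rangle+\cdots\bigr)$ in the parabolic region $|y|\lesssim\sqrt{|\tau|}$, with $B(\tau)$ obeying a matrix Riccati-type ODE governing the rate and shape of its decay. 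The trace part of $B$ is already nonzero for the $\mathrm{O}(k)$-symmetric oval; a nonvanishing \emph{traceless} part is what breaks the symmetry, and the admissible traceless limiting shapes, modulo ambient rotation, form precisely the $(k-1)$-parameter family.

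\textbf{Construction and the main obstacle.} To realise a prescribed admissible shape I would adapt the barrier-and-compactness scheme of \cite{HaslhoferHershkovits_ancient}: flow forward from ellipsoids with the prescribed semi-axis ratios starting at times $t_i\to-\infty$, trap the flows between inner spheres and outer $(k+1)$-convex bubble-sheet-type barriers adapted to the chosen shape so that the limit is ancient, compact, noncollapsed, uniformly $(k+1)$-convex and non-degenerate (neither a shrinking round sphere nor a splitting cylinder), and pass to a subsequential limit $M^\infty_t$. The decisive and genuinely hard step is then a \emph{quantitative} asymptotic analysis of $M^\infty_t$: one must propagate the spectral decomposition over the entire backward time interval with errors sharp enough to prove that the neutral-mode matrix of $M^\infty_t$ converges to the solution of the shape-ODE with the prescribed traceless data, which simultaneously excludes the ``all modes negligible'' scenario (forcing the round sphere) and distinguishes ovals with different ratios. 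The subtlest part is controlling the coupling between the bubble-sheet region and the two tip regions, which for $k\geq 2$ form a $(k-1)$-parameter family of caps rather than isolated points and must themselves be handled by a degenerate-neck estimate in the spirit of \cite{ADS1,ADS2}.
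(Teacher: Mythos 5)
You correctly read the situation: the statement is to be refuted, and the counterexamples should be $\mathbb{Z}_2^k\times\mathrm{O}(n+1-k)$-symmetric ovals obtained as limits of mean curvature flows starting from anisotropic ellipsoids over the bubble-sheet $\mathbb{R}^k\times S^{n-k}$. That much matches the paper. But the step you yourself flag as decisive — a quantitative backward-in-time spectral analysis showing that the neutral-mode matrix of the limit converges to a shape-ODE solution with \emph{prescribed traceless data} — is a genuine gap, and as framed it is likely to fail at the order you place it. The neutral modes obey a matrix Riccati-type dynamic whose solutions all isotropize at leading order: the trace part decays like $|\tau|^{-1}$ while the traceless part is expected to decay strictly faster, so every one of these ovals should share the same leading parabolic asymptotics (a multiple of $\rho^2-2k$, exactly as in Theorem \ref{thm_shap_asympt}), and the data distinguishing them sits at lower order or, equivalently, in finite-time geometry. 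Prescribing the limiting traceless shape at leading order therefore cannot be the mechanism, and carrying the expansion to the order where the distinction actually lives is a much harder analysis than your sketch acknowledges; nothing in your proposal supplies it, nor the compactness/nontriviality of the limit (your "barriers adapted to the chosen shape" are not constructed).

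The paper avoids all fine asymptotics. It normalizes each ellipsoidal flow by its extinction time and by fixing the Gaussian density at time $-1$ to be $\tfrac{1}{2}(\sigma_{n-k}+\sigma_{n-k+1})$ — this single normalization later forces compactness of the limit and pins the tangent flow at $-\infty$ to $\mathbb{R}^k\times S^{n-k}$, replacing bespoke barriers. The anisotropy is then prescribed through a \emph{finite-time} quantity: the reciprocal width ratio at $t=-1$, viewed as a map $F^\ell_k:\Delta_{k-1}\to\Delta_{k-1}$ on the ellipsoidal parameter simplex. Degenerate parameters split off a line, so $F^\ell_k$ maps the boundary to the boundary and restricts on each face to $F^\ell_{k-1}$; by induction and the no-retraction theorem, $F^\ell_k$ is onto, so any target ratio $\mu_i$ is hit at finite $\ell_i$. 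Passing to the limit with the global convergence theorem preserves the ratio, since it is a continuous functional of the flow at a fixed time. Distinctness of the examples, and failure of $\mathrm{O}(k)$-symmetry when the $\mu_j$ are not all $1/k$, are then immediate from the prescribed ratios — no spectral expansion, no coupling estimates between the bubble-sheet region and the caps. If you want to salvage your route, the lesson is to prescribe an invariant that survives the compactness limit trivially rather than one encoded in the leading ancient asymptotics.
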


The first conjecture has already been stated in  \cite{ADS1}, while the second conjecture has been stated by Professor Daskalopoulos in her Distinguished Lecture Series in 2017 at the Fields Institute in Toronto.\footnote{The videos of her outstanding lecture series are available at the following website: http://www.fields.utoronto.ca/activities/17-18/DLS-Daskalopoulos }

The argument by Angenent-Daskalopoulos-Sesum in the uniformly two-convex case proceeds in the following three steps:
 \begin{itemize}
\item prove that $\mathrm{SO}(n)$-symmetric ancient ovals have unique asymptotics, which is done in \cite{ADS1}.
\item prove that any uniformly two-convex ancient oval is $\mathrm{SO}(n)$-symmetric, which is done in \cite[Section 2]{ADS2}.
\item prove that the unique asymptotics can be upgraded to uniqueness, which is done in \cite[Sections 3--8]{ADS2}.
\end{itemize}
The main way how the uniform two-convexity assumption enters is that it forces the tangent flow at time $-\infty$ to be a neck, namely
\begin{equation}
\lim_{\lambda\to 0}\lambda M_{\lambda^{-2}t} = \mathbb{R}\times S^{n-1}(\sqrt{2(n-1)|t|}).
\end{equation}
In contrast, without uniform two-convexity the tangent flow at time $-\infty$ can be a general cylinder with more than one $\mathbb{R}$-factor, namely
\begin{equation}\label{tangent_bubble_intro}
\lim_{\lambda\to 0}\lambda M_{\lambda^{-2}t} = \mathbb{R}^k\times S^{n-k}(\sqrt{2(n-k)|t|})
\end{equation}
for some $2\leq k\leq n-1$. While necks have been analyzed in great detail over the last 20 years culminating in the recent classification results from \cite{ADS1,ADS2,BC1,BC2,CHH,CHHW}, the current understanding of general cylinders with more than one $\mathbb{R}$-factor is much more limited.

In the present paper, we generalize the first and third step of the argument by Angenent-Daskalopoulos-Sesum without imposing any two-convexity assumption, and prove:

\begin{theorem}[uniqueness]\label{symmetryuniqueness}
$\mathrm{SO}(k)\times \mathrm{SO}(n+1-k)$-symmetric ancient ovals are unique up to time-shift and parabolic dilation.
\end{theorem}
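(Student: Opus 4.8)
The plan is to follow the ADS three-step strategy, but now the second step is free (symmetry is assumed in the hypothesis), so the work splits into (i) establishing unique asymptotics as $t\to-\infty$ and (ii) upgrading these asymptotics to uniqueness. Throughout, I would work with the rescaled flow $\bar M_\tau = e^{\tau/2} M_{-e^{-\tau}}$, whose hypersurfaces are $\mathrm{SO}(k)\times\mathrm{SO}(n+1-k)$-symmetric and hence described by a rotationally symmetric profile function $u(y,\omega,\tau)$ over the $k$ long directions, converging as $\tau\to-\infty$ to the cylinder $\sqrt{2(n-k)}$ by the classification of tangent flows at $-\infty$ and noncollapsedness. The key is that the Jacobi operator $\mathcal{L}=\Delta - \tfrac{1}{2}y\cdot\nabla + 1$ on the cylinder has, after imposing the symmetry, the same structure as in the two-convex neck case: a two-dimensional unstable (plus) eigenspace spanned by $\{y_i^2 - 2\}$ (which by the symmetry collapses to the single radial mode $|y|^2 - 2k$), a neutral (zero) eigenspace, and a negative spectrum. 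First I would run the ADS-type spectral decomposition near the cylinder to show the plus mode dominates, derive the ODE $\tfrac{d}{d\tau}\|P_+ \bar u\| \sim \|P_+\bar u\|$, and rule out the neutral and negative modes from dominating, yielding the precise asymptotic expansion
\begin{equation}
u(y,\tau) = \sqrt{2(n-k)}\left(1 - \frac{|y|^2 - 2k}{4|\tau|} + o(|\tau|^{-1})\right)
\end{equation}
on compact sets in the cylindrical region, matched with a parabolic region of size $\sqrt{|\tau|}$ governed by a Bessel-type ODE, exactly as in \cite{ADS1}. This pins down the asymptotics uniquely up to the scaling and time-shift freedom.

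The second and harder part is the uniqueness step itself: given two $\mathrm{SO}(k)\times\mathrm{SO}(n+1-k)$-symmetric ancient ovals $M_t^1, M_t^2$ with the same asymptotics (after normalizing the two free parameters), show they coincide. Here I would adapt the ADS energy/barrier scheme from \cite[Sections 3--8]{ADS2}. One decomposes space-time into the tip region, the cylindrical region, and the intermediate/parabolic region, constructs sub- and super-solutions in each, and uses a weighted $L^2$ energy estimate for the difference $w = u_1 - u_2$ satisfying a linear parabolic equation $\partial_\tau w = \mathcal{L}w + (\text{error})$. The crucial coercivity estimate $\tfrac{d}{d\tau}\|w\|_{\mathfrak{H}}^2 \leq -c\|w\|_{\mathfrak{H}}^2$ (in an appropriate weighted norm) forces $w\equiv 0$, once one shows the unstable mode of $w$ is controlled — this is arranged by the time-shift normalization, which kills the projection of $w$ onto the plus eigenspace. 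The tip region requires separate treatment via the Angenent-type analysis of the tip profile (a translating-soliton-like asymptotic, here the bowl soliton $\mathrm{Bowl}_{k}$ in the relevant dimensions), and one must carry the estimates from the tip all the way through the collar to the cylindrical region.

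I expect the main obstacle to be precisely the fact that the tangent flow at $-\infty$ is a \emph{higher} cylinder $\mathbb{R}^k \times S^{n-k}$ with $k\geq 2$, rather than a neck. This affects the analysis in two ways: the parabolic region is now $k$-dimensional rather than $1$-dimensional, so the relevant ODE after separation of variables is a Bessel equation in $|y|\in\mathbb{R}_{\geq0}$ with a different index, and one must verify that the same rigidity (uniqueness of the bounded solution with the correct behavior at the origin and at infinity) persists; and the geometry of the "edge" where the $\mathbb{R}^k$ caps off is more complicated than a single tip — although the $\mathrm{SO}(k)$ symmetry reduces it back to an effectively one-dimensional free-boundary problem, so the tip analysis should go through with the bowl soliton in $(n-k+1)$-dimensions replaced appropriately. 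A secondary technical point is checking that noncollapsedness plus the assumed symmetry genuinely forces convergence to the standard round cylinder (and not some other limit), which should follow from the Haslhofer-Kleiner structure theory together with the fact that an $\mathrm{SO}(k)\times\mathrm{SO}(n+1-k)$-symmetric shrinker is either a sphere, a cylinder, or a plane. Once these adaptations are in place, the ADS machinery applies essentially verbatim, giving the stated uniqueness.
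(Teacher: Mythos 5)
Your overall plan (sharp asymptotics first, then an ADS-style energy scheme in cylindrical/collar/tip regions with a normalization killing the low modes) is indeed the architecture of the paper's proof, but the core spectral mechanism in your first step is misidentified, and this is a genuine gap rather than a cosmetic one. For the radial Ornstein--Uhlenbeck operator $\mathcal L=\partial_\rho^2+\tfrac{k-1}{\rho}\partial_\rho-\tfrac{\rho}{2}\partial_\rho+1$ relevant here, the quadratic mode $\rho^2-2k$ (your $|y|^2-2k$) is the \emph{neutral} eigenfunction (eigenvalue $0$); the unstable eigenspace is spanned by the constant $1$ (eigenvalue $1$), with the linear modes already excluded by symmetry. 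You assert that the ``plus'' mode is spanned by $\{y_i^2-2\}$, that it dominates, and that it obeys $\tfrac{d}{d\tau}\|P_+\bar u\|\sim\|P_+\bar u\|$; such a linear ODE would force exponential behavior in $\tau$ and is incompatible with the expansion $u=\sqrt{2(n-k)}\bigl(1-\tfrac{|y|^2-2k}{4|\tau|}\bigr)+o(|\tau|^{-1})$ that you then write down. The actual argument must first invoke a Merle--Zaag alternative and rule out dominance of the genuine unstable mode (in the paper this uses the fine-cylindrical analysis of \cite{DH}: unstable-mode dominance would produce a distinguished direction, contradicting the symmetry), and then the neutral-mode coefficient $\alpha_0(\tau)=\langle \hat v,\rho^2-2k\rangle$ satisfies the \emph{quadratic} ODE $\tfrac{d}{d\tau}\alpha_0=-c\,\alpha_0^2+o(\alpha_0^2)$, whose solution $\alpha_0\sim -1/(c|\tau|)$ is what produces the $1/|\tau|$ rate and the precise coefficient $1/4$. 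Without this, your claimed asymptotics are unsupported.

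The same misidentification propagates into your uniqueness step: since both a positive mode ($1$) and a zero mode ($\rho^2-2k$) are present, the normalization must use \emph{two} parameters (time-shift and parabolic dilation) to impose orthogonality to both, which the paper arranges by a degree-theory argument; and even after that, the neutral projection $a(\tau)=\langle w_{\mathcal C},\rho^2-2k\rangle$ is not controlled by any coercivity inequality $\tfrac{d}{d\tau}\|w\|^2\le -c\|w\|^2$ — it requires a separate ODE/integral argument combined with the cylindrical and tip energy estimates and the equivalence of norms in the transition region. Two further ingredients you gloss over but which are genuinely new for $k\ge 2$: the lower bound in the intermediate region needs new inner barriers built by shifting and rotating the ADS shrinkers (the naive adaptation only gives the upper bound), and the a priori quadratic concavity $(u^2)_{\rho\rho}\le 0$ and cylindrical estimates (here obtained by a splitting/limit argument using that blow-up limits split off $k$ lines, plus Brendle--Choi for the tip) are prerequisites for the weighted energy estimates, not consequences of them.
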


This confirms the conjecture by Angenent-Daskalopoulos-Sesum (Conjecture \ref{conj1}). Together with the existence result from \cite{HaslhoferHershkovits_ancient}, we also obtain:

\begin{corollary}[existence with sharp asymptotics]
For every $1\leq k\leq n$, there exists an $\mathrm{O}(k)\times \mathrm{O}(n+1-k)$-symmetric ancient oval, which for $t\to -\infty$ looks like a small perturbation of an ellipsoid with $k$ long axes of length $\sqrt{2|t|\log|t|}$ and $n-k$ short axes of length $\sqrt{2(n-k)|t|}$.
\end{corollary}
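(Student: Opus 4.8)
The plan is to obtain the corollary by combining the existence result of \cite{HaslhoferHershkovits_ancient} with Theorem \ref{symmetryuniqueness} — more precisely, with the sharp asymptotic description of $\mathrm{SO}(k)\times\mathrm{SO}(n+1-k)$-symmetric ancient ovals on which the proof of that theorem is built (the generalization of the first step of \cite{ADS1} from $k=1$ to arbitrary $k$). First I would record that, for each fixed $1\le k\le n$, the flow $M_t$ produced in \cite{HaslhoferHershkovits_ancient} falls under Theorem \ref{symmetryuniqueness}: it is defined for all $t\ll 0$, compact, noncollapsed, and not self-similar, hence an ancient oval; it is $\mathrm{O}(k)\times\mathrm{O}(n+1-k)$-symmetric, hence in particular $\mathrm{SO}(k)\times\mathrm{SO}(n+1-k)$-symmetric and nontrivial; and, by construction, its tangent flow at $-\infty$ is the round cylinder $\mathbb{R}^k\times S^{n-k}(\sqrt{2(n-k)\abs{t}})$ in the splitting $\mathbb{R}^{n+1}=\mathbb{R}^k\times\mathbb{R}^{n+1-k}$ adapted to the symmetry (for $k=n$ the spherical factor, and with it the short axes, is to be read as degenerate).

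Next, I would invoke the unique-asymptotics statement underlying Theorem \ref{symmetryuniqueness}: every nontrivial $\mathrm{SO}(k)\times\mathrm{SO}(n+1-k)$-symmetric ancient oval has, as $t\to-\infty$, the profile predicted by Angenent. Concretely one describes $M_t$ in three regions. In the parabolic region, after rescaling, $M_t$ is an arbitrarily small normal graph over $\mathbb{R}^k\times S^{n-k}(\sqrt{2(n-k)\abs{t}})$ on every fixed compact set. In the intermediate region one writes the radius $u$ of the $S^{n-k}$-factor as a function of the position $y\in\mathbb{R}^k$ along the axis (it depends only on $\abs{y}$ by the $\mathrm{O}(k)$-symmetry); after the parabolic change of variables $z=y/\sqrt{2\abs{t}\log\abs{t}}$ one gets $u=\sqrt{2(n-k)\abs{t}}\,(1-\abs{z}^2)^{1/2}(1+o(1))$ for $\abs{z}\le 1$, so that each of the $k$ long semi-axes has length $(1+o(1))\sqrt{2\abs{t}\log\abs{t}}$ and each of the $n-k$ short semi-axes has length $(1+o(1))\sqrt{2(n-k)\abs{t}}$. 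Finally, near each tip of a long axis, after rescaling by the tip curvature, $M_t$ converges to a rotationally symmetric translating (bowl) soliton, whose spatial scale is $o(\sqrt{\abs{t}\log\abs{t}})$ and hence does not affect the leading-order length of the axes. Patching the three pieces together shows that for all $t\ll 0$ the hypersurface $M_t$ is a small perturbation — in Hausdorff distance after rescaling to unit size, and smoothly on compact subsets after parabolic rescaling — of the ellipsoid with $k$ semi-axes of length $\sqrt{2\abs{t}\log\abs{t}}$ and $n-k$ semi-axes of length $\sqrt{2(n-k)\abs{t}}$. Applying this to the oval of \cite{HaslhoferHershkovits_ancient} is precisely the corollary.

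I do not expect a genuine obstacle: all of the hard analysis is already carried out in the proof of Theorem \ref{symmetryuniqueness}, and uniqueness up to rigid motion, time-shift and parabolic dilation makes these normalized asymptotics independent of the chosen representative. The only items requiring (routine) care are: checking that the three region-wise estimates overlap and assemble into one uniform ``$\eps$-close to an ellipsoid'' statement as $t\to-\infty$; confirming that nontriviality of the \cite{HaslhoferHershkovits_ancient} oval pins down exactly $k$ long axes (equivalently, that its tangent flow at $-\infty$ is $\mathbb{R}^k\times S^{n-k}$ rather than a cylinder with a different number of $\mathbb{R}$-factors compatible with the symmetry); and treating the endpoint $k=n$ of ancient pancakes, where the ``short axes'' degenerate and one instead records that the thin direction stays of lower order than $\sqrt{\abs{t}\log\abs{t}}$, so that the Angenent ellipsoid should be read with a single bounded semi-axis.
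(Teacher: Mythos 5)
Your proposal is correct and is essentially the paper's own argument: the corollary is obtained exactly as you describe, by feeding the $\mathrm{O}(k)\times\mathrm{O}(n+1-k)$-symmetric ovals of \cite{HaslhoferHershkovits_ancient} into the sharp asymptotics of Theorem \ref{thm_shap_asympt} (parabolic, intermediate and tip regions, plus $d(t)=\sqrt{2|t|\log|t|}(1+o(1))$), with no further argument given in the paper. One small caution: your aside about the endpoint $k=n$ being an ``ancient pancake'' is off the mark, since pancakes are collapsed and hence not ovals in this paper's sense; that endpoint is simply the relabeling $k\leftrightarrow n+1-k$ of a case already covered (the tangent flow at $-\infty$ must be a noncollapsed cylinder, so the number of long axes is at most $n-1$), which is consistent with the paper's own ``possibly after replacing $k$ by $n-k$'' convention.
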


This confirms a conjecture by Angenent \cite{Angenent_formal}. See Theorem \ref{thm_shap_asympt} below for a more detailed description of the asymptotics for $t\to -\infty$. Forward in time ancient ovals of course become round by the work of Huisken \cite{Huisken_convex}.

On the other hand, we also show that the second step of the argument by Angenent-Daskalopoulos-Sesum does not generalize to the case without two-convexity. Specifically, we prove:
\begin{theorem}[existence with reduced symmetry]
For every $k\geq 2$ there exists a $(k-1)$-parameter family of geometrically distinct uniformly $(k+1)$-convex ancient ovals that are only $\mathbb{Z}^{k}_{2}\times \mathrm{O}(n+1-k)$-symmetric.
\end{theorem}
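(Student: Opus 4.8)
\textit{Proof strategy.} The plan is to reduce the theorem to a single construction and then read off the symmetry, the geometric distinctness, and the parameter count formally. Concretely, I would prove that for every positive-definite diagonal quadratic form $Q(y)=\sum_{i=1}^{k}a_{i}y_{i}^{2}$ on the axis factor $\mathbb{R}^{k}$ there is an ancient oval $M_{t}^{Q}$ whose tangent flow at $-\infty$ is the cylinder $\mathbb{R}^{k}\times S^{n-k}(\sqrt{2(n-k)|t|})$ of \eqref{tangent_bubble_intro} and whose rescaled profile in the parabolic region is, to leading order, an ellipsoid of shape $Q$, the construction being run equivariantly with respect to $\mathbb{Z}^{k}_{2}\times\mathrm{O}(n+1-k)$, the group generated by the reflections $y_{i}\mapsto-y_{i}$ and the rotations of the $\mathbb{R}^{n+1-k}$ factor fixing $S^{n-k}$. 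Granting this, $M_{t}^{Q}$ is $\mathbb{Z}^{k}_{2}\times\mathrm{O}(n+1-k)$-symmetric, and it has no larger symmetry when the $a_{i}$ are pairwise distinct: any isometry of $\mathbb{R}^{n+1}$ fixing $M_{t}^{Q}$ is linear (the oval being compact, it fixes the center of mass), it preserves the cylindrical tangent flow and hence lies in $\mathrm{O}(k)\times\mathrm{O}(n+1-k)$, and it must preserve the asymptotic profile and therefore fix $Q$ — but the stabilizer in $\mathrm{O}(k)$ of a diagonal form with distinct eigenvalues is precisely $\mathbb{Z}^{k}_{2}$. Finally, the collection of axis ratios $(a_{1}:\dots:a_{k})$ is an invariant under rigid motion, time-shift and parabolic dilation and, being the shape of the limiting ellipsoid, distinguishes the solutions; its overall scale is pinned by the matching with the round cylinder of radius $\sqrt{2(n-k)}$, so exactly $k-1$ of the $a_{i}$ are free (quotienting further by permutations of the axes, which are rigid motions, does not change the dimension). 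This yields the claimed $(k-1)$-parameter family of geometrically distinct ovals.

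For the construction I would follow the scheme of Angenent--Daskalopoulos--Sesum \cite{ADS1,ADS2}, carried out equivariantly, using the same spectral and Merle--Zaag-type machinery that drives the uniqueness theorem (Theorem \ref{symmetryuniqueness}) but now on a higher-dimensional neutral eigenspace. In the rescaled flow, write the hypersurface in the cylindrical region as a graph $u(y,\tau)$ over $\mathbb{R}^{k}\times S^{n-k}$; the linearization is the Ornstein--Uhlenbeck-type operator $\mathcal{L}=\Delta-\tfrac{1}{2}y\cdot\nabla+1$ on (sphere-invariant) functions of $y\in\mathbb{R}^{k}$, with unstable space spanned by the constants and the linear functions, neutral space spanned by the degree-two Hermite polynomials $\{y_{i}^{2}-2\}$ — a $k$-dimensional space after imposing $\mathbb{Z}^{k}_{2}$-symmetry, whence the $k-1$ free directions after normalization — and stable space its orthogonal complement. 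Decomposing $u=u^{+}+u^{0}+u^{-}$, a Merle--Zaag-type ODE analysis shows that along an ancient oval the neutral part dominates and $u^{0}(y,\tau)\approx -c\,Q(y)/|\tau|$ for some positive-semidefinite $Q$ and universal $c>0$, so the surface occupies to leading order a shrinking ellipsoidal region $\{Q(y)\le c'|\tau|\}$. Conversely, for each prescribed positive-definite diagonal $Q$ one realizes this asymptotics by a fixed-point argument on a weighted Banach space of solutions on $(-\infty,T]$: prescribing the asymptotics pins down the finite-dimensional unstable and neutral modes, while the contracting stable part is obtained from Schauder-type estimates. Near the ``tip set'' — which for $k\ge 2$ is a $(k-1)$-sphere's worth of caps rather than two isolated ones — one matches in a rotationally symmetric cap region modeled on the bowl soliton, as in \cite{ADS2}. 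Alternatively, these solutions can be produced by White's method \cite{White_nature,HaslhoferHershkovits_ancient} as limits of rescaled flows out of $\mathbb{Z}^{k}_{2}\times\mathrm{O}(n+1-k)$-symmetric, suitably elongated ellipsoidal domains, with the asymptotic analysis invoked afterwards to identify the parameters.

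It remains to verify that $M_{t}^{Q}$ is a genuine ancient oval with the stated properties. It is noncollapsed because it is a scale-invariantly $C^{2}$-small perturbation of the noncollapsed round cylinder in the cylindrical region and of a convex cap in the tip region, with uniform constants; it is not self-similar because the logarithmic correction in the asymptotics of $u^{0}$ is incompatible with the scaling of a shrinking ellipsoid. Uniform $(k+1)$-convexity — meaning $\lambda_{1}+\dots+\lambda_{k+1}\ge\beta H$ for a fixed $\beta>0$ — is the quantitative heart of the matter and also explains why $k+1$ is the right index: the round cylinder $\mathbb{R}^{k}\times S^{n-k}$ of radius $r$ has principal curvatures $(0,\dots,0,\tfrac{1}{r},\dots,\tfrac{1}{r})$, so $\lambda_{1}+\dots+\lambda_{k+1}=\tfrac{1}{r}=\tfrac{1}{n-k}H$; it is thus uniformly $(k+1)$-convex but not uniformly $k$-convex, and uniform $(k+1)$-convexity with $\beta$ slightly below $\tfrac{1}{n-k}$ is an open condition preserved under the scale-invariantly small perturbations occurring in the cylindrical, intermediate, and (uniformly convex) cap regions. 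I expect the main obstacle to be the fixed-point/gluing step: compared with the $\mathrm{SO}(n)$-symmetric situation of \cite{ADS1,ADS2}, the equivariant neutral eigenspace is genuinely higher-dimensional, the tip set over which the cap must be glued is positive-dimensional, and one must propagate asymptotics sharp enough to certify that the bending form of the constructed solution is exactly the prescribed $Q$ — which is precisely what makes the family $(k-1)$-dimensional and its members pairwise geometrically distinct, and also what secures the exact symmetry group.
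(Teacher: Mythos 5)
Your plan hinges on two steps that are not carried out, and the more serious of the two is not just missing but likely unworkable as stated. First, the core of your construction --- a fixed-point/gluing argument producing an ancient solution with \emph{prescribed} neutral-mode asymptotics $u^{0}\approx -c\,Q(y)/|\tau|$, matched across a positive-dimensional tip set --- is exactly what you yourself flag as ``the main obstacle,'' and no argument is given for it. Prescribing a neutral (only polynomially decaying) mode is a genuinely hard analytic problem: there is no contraction in that direction, the errors from the cutoff and from the tip matching enter at the same order $|\tau|^{-1}$, and nothing in the Merle--Zaag machinery (which analyzes solutions that are already assumed to exist and be ancient) produces solutions. Second, and more fundamentally, the invariant you propose to use to get $k-1$ parameters and geometric distinctness --- the ``shape'' of the leading-order quadratic form $Q$ --- is expected to be degenerate: the projected neutral-mode dynamics is a matrix Riccati equation of the form $\tfrac{d}{d\tau}Q=-cQ^{2}+\mathrm{errors}$, and every ancient solution of $\dot q=-cq^{2}$ with $q\to 0$ as $\tau\to-\infty$ satisfies $q(\tau)=-\tfrac{1}{c|\tau|}+o(|\tau|^{-1})$ regardless of its value at any fixed time. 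So for compact ovals whose tangent flow at $-\infty$ is \eqref{tangent_bubble_intro}, all nonzero eigenvalues of $Q(\tau)$ equalize at leading order and the parabolic-region profile is isotropic for \emph{every} member of the family; the anisotropy (different widths in the $\mathbb{R}^{k}$-directions) is a subleading effect. Hence prescribing the leading-order $Q$ cannot parametrize a $(k-1)$-dimensional family, and your argument that distinct $Q$'s give geometrically distinct, non-$\mathrm{O}(k)$-symmetric solutions collapses.

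The paper avoids both difficulties by a soft argument: it runs mean curvature flow from the ellipsoids $E^{\ell,a}$, $a\in\Delta_{k-1}$, normalizes each flow by its extinction time and by a Huisken-density condition at time $-1$ (which forces compactness of limits and pins down the bubble-sheet tangent flow), and then studies the reciprocal width ratio map $F^{\ell}_{k}:\Delta_{k-1}\to\Delta_{k-1}$ built from $\sup_{x\in M^{\ell,a}_{-1}}|x_{j}|$. Continuity, the fact that $F^{\ell}_{k}$ maps each boundary face to itself (degenerate ellipsoids split off a line), induction on the faces, and the no-retraction theorem give surjectivity, so every prescribed width-ratio vector $\mu\in\mathrm{Int}(\Delta_{k-1})$ is achieved; the global convergence theorem then produces the ancient ovals as $\ell\to\infty$, with noncollapsing, uniform $(k+1)$-convexity and $\mathbb{Z}_{2}^{k}\times\mathrm{O}(n+1-k)$-symmetry inherited from the ellipsoids, and with the width ratios at the normalized time $-1$ (not any asymptotic profile) serving as the distinguishing invariant; see Theorem \ref{ovalfamily}. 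Your one-sentence alternative (White's method from elongated symmetric domains ``with the asymptotic analysis invoked afterwards to identify the parameters'') gestures at this route, but it still leans on the unproven asymptotic identification; the missing idea in your proposal is precisely the normalization-plus-degree-theory mechanism that lets one prescribe and distinguish the $k-1$ parameters without any fine asymptotics.
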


This gives counterexamples to the conjecture of Daskalopoulos (Conjecture \ref{conj2}). The tangent flow at $-\infty$ of our ovals is given by \eqref{tangent_bubble_intro}, and the mechanism for non-uniqueness is that the widths in all the $\mathbb{R}^{k}$-directions can be different. See Theorem \ref{ovalfamily} below for a more detailed description of our ancient ovals with reduced symmetry. The symmetry of our examples, possibly up to reflection, seems to be optimal. Indeed, by the Brendle-Choi neck improvement \cite{BC1,BC2}  and its generalization to bubble-sheets by Zhu \cite{Zhu} it is reasonable to expect that any ancient oval satisfying \eqref{tangent_bubble_intro} inherits the $\mathrm{SO}(n+1-k)$-symmetry from its tangent flow at $-\infty$.

Finally, let us mention that Bourni-Langford-Tinaglia \cite{BLT1,BLT2} recently obtained existence and uniqueness results for $\mathrm{SO}(n)$-symmetric ancient pancakes, and moreover constructed examples of polygonal pancakes that are not $\mathrm{SO}(n)$-symmetric. However, pancake solutions are collapsed, and thus less relevant for singularity analysis.\\

Let us now outline the main steps of our proofs: To begin with, note that for any $\mathrm{SO}(k)\times \mathrm{SO}(n+1-k)$-symmetric ancient oval $M_t $ (possibly after replacing $k$ by $n-k$) the tangent flow at $-\infty$ is given by \eqref{tangent_bubble_intro}. Any such ancient oval can be described by a profile function, namely
\begin{equation}
M_t = \left\{ (x',x'')\in \mathbb{R}^k \times \mathbb{R}^{n+1-k}\, : |x''|= U(|x'|,t), \, |x'|\leq d(t)\right\}\, .
\end{equation}
The profile function $U(r,t)$ is positive on a maximal interval $0\leq r < d(t)$ and satisfies $\lim_{r\to d(t)} U(r,t)=0$. In terms of the profile function, the mean curvature flow equation takes the form
\begin{equation}\label{profile_intro}
   U_t=\frac{U_{rr}}{1+U_{r}^2}+\frac{k-1}{r}U_{r}-\frac{n-k}{U}\, .
\end{equation}
It is also useful to consider the renormalized profile function
\begin{equation}\label{profile_ren_intro}
 u(\rho,\tau)= e^{\tau/2} U(e^{\tau/2}\rho, -e^{-\tau})\, ,
 \end{equation}
which geometrically is the profile function of the renormalized flow $e^{\frac{\tau}{2}} \,M_{-e^{-\tau}}$. Moreover, we consider the inverse profile function $Y(\cdot,\tau)$ defined as the inverse function of $u(\cdot,\tau)$, and its zoomed in version
\begin{equation}
Z(s,\tau)=|\tau|^{1/2}\left( Y(|\tau|^{-1/2}s,\tau)-Y(0,\tau)\right)\, .
\end{equation}

In Section \ref{sec_sharp_asym}, we establish the following sharp asymptotics:

\begin{theorem}[sharp asymptotics]\label{thm_shap_asympt}
Any $\mathrm{SO}(k)\times \mathrm{SO}(n+1-k)$-symmetric ancient oval (possibly after replacing $k$ by $n-k$) satisfies the following sharp asymptotics:
\begin{itemize}
    \item Parabolic region: For every $0<M<\infty$, the renormalized profile function satisfies
    \begin{equation*}
     u(\rho,\tau)=\sqrt{2(n-k)}\left(1-\frac{\rho^2-2k}{4|\tau|}\right) +o(|\tau|^{-1})
    \end{equation*}
        uniformly for $\rho\leq M$ and $\tau\ll 0$.
    \item Intermediate region: The function $\bar{u}(\sigma, \tau)=u(|\tau|^{\frac{1}{2}}\sigma, \tau)$ satisfies
\begin{equation*}
    \lim_{\tau\rightarrow -\infty}\bar{u}(\sigma, \tau)=\sqrt{(n-k)(2-\sigma^2)}
\end{equation*}
    uniformly on every compact subset of $\sigma<\sqrt{2}$.
    \item Tip region: For $\tau\to -\infty$, the function $Z(s,\tau)$ converges locally smoothly to the profile function $\bar{Z}(s)$ of the $(n+1-k)$-dimensional round bowl soliton with speed $1/\sqrt{2}$, namely to the solution of the ODE
    \begin{equation*}
    \frac{\bar{Z}_{ss}}{1+\bar{Z}_{s}^2}+\frac{n-k}{s}\bar{Z}_s + \frac{\sqrt{2}}{2}=0, \quad \bar{Z}(0)=\bar{Z}'(0)=0.
    \end{equation*}
\end{itemize}
In particular, we have $d(t)=\sqrt{2|t|\log|t|}(1+o(1))$ for $t\to -\infty$.
\end{theorem}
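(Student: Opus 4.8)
The plan is to adapt to the general cylindrical setting \eqref{tangent_bubble_intro} the three-region analysis of Angenent--Daskalopoulos--Sesum \cite{ADS1}, which treats the neck case, working throughout with the renormalized profile $u(\rho,\tau)$ of \eqref{profile_ren_intro}. First we pin down the tangent flow at $-\infty$. Since $M_t$ is ancient, compact and noncollapsed it is convex, and Huisken's monotonicity formula together with the classification of mean-convex self-shrinkers forces every tangent flow at $-\infty$ to be a generalized cylinder $\mathbb{R}^j\times S^{n-j}(\sqrt{2(n-j)|\tau|})$. The round-sphere and plane cases are excluded (they would force $M_t$ to be self-similar, contradicting the definition of an oval), and the $\mathrm{SO}(k)\times\mathrm{SO}(n+1-k)$-symmetry forces the sphere factor to fill one of the two coordinate blocks, so after possibly replacing $k$ by $n-k$ we may take $j=k$. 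Parabolic regularity and noncollapsing then upgrade this to smooth convergence $u(\cdot,\tau)\to\sqrt{2(n-k)}$, with uniform derivative bounds, on compact $\rho$-intervals as $\tau\to-\infty$; this is the input for the finer analysis.

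The heart of the argument is the parabolic region. Writing $u=\sqrt{2(n-k)}+v$ and linearizing the renormalized form of \eqref{profile_intro} around the equilibrium, one finds that $v$ evolves to leading order under $\mathcal{L}=\partial_\rho^2+\tfrac{k-1}{\rho}\partial_\rho-\tfrac{\rho}{2}\partial_\rho+1$, the radial part of the operator $\Delta-\tfrac12 x\cdot\nabla+1$ on the Gaussian-weighted $L^2$-space over $\mathbb{R}^k$. In this radial sector the spectrum of $\mathcal{L}$ is $\{1,0,-1,-2,\dots\}$: the only unstable eigenvalue is $1$, with constant eigenfunction, and the neutral eigenvalue $0$ has eigenfunction $\rho^2-2k$. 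Splitting $v=v_++v_0+v_-$ and deriving the standard system of differential inequalities, a Merle--Zaag type ODE argument (as in \cite{ADS1}) gives that either $v_+$ or $v_0$ dominates as $\tau\to-\infty$. If $v_+$ dominated then the renormalized flow would approach the cylinder exponentially fast in $\tau$, which is incompatible with compactness of $M_t$, so $v_0$ dominates and the coefficient $a(\tau)$ of $\rho^2-2k$ is nonzero. Projecting the quadratic part of \eqref{profile_intro} — in particular the reaction term $-\tfrac{n-k}{u}$ — onto the neutral mode yields an ODE of the form $\dot a=-c\,a^2+\text{l.o.t.}$ with $c>0$ an explicit Gaussian integral, hence $a(\tau)=\tfrac{1}{c|\tau|}+o(|\tau|^{-1})$; keeping track of the constants produces precisely $u=\sqrt{2(n-k)}\big(1-\tfrac{\rho^2-2k}{4|\tau|}\big)+o(|\tau|^{-1})$ uniformly for $\rho\le M$. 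The most delicate point, and the main obstacle rather than the intermediate or tip analysis, is showing that $v_-$ and the error term in the ODE for $a$ are genuinely $o(|\tau|^{-1})$, uniformly on compact $\rho$-intervals; this needs careful weighted energy estimates for $\mathcal{L}$ and is where the constant $1/4$ is locked in.

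For the remaining two regions we use matched asymptotics, made rigorous by barriers and compactness. In the intermediate variable $\sigma=\rho/|\tau|^{1/2}$, discarding the terms of \eqref{profile_intro} that carry a factor $|\tau|^{-1}$ leaves the first-order ODE $\sigma\,(\bar u^2)_\sigma=2\bar u^2-4(n-k)$, whose solution matching the inner expansion as $\sigma\to0$ is $\bar u=\sqrt{(n-k)(2-\sigma^2)}$; one proves the convergence by trapping $u$ between sub- and supersolutions built from this profile. For the tip region we work with the rescaled inverse profile $Z(s,\tau)$: the corresponding rescaled flows near the tip are ancient, convex, noncollapsed and $\mathrm{SO}(n+1-k)$-symmetric with uniformly bounded geometry, so by a compactness argument and the classification of rotationally symmetric noncollapsed translators they subconverge to the $(n+1-k)$-dimensional bowl soliton, while matching with the intermediate region — where $Y(0,\tau)\,|\tau|^{-1/2}\to\sqrt2$ — pins the translation speed to $1/\sqrt2$. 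Finally, the intermediate profile contributes the bulk $d(t)\sim\sqrt{2|t|}\,|\tau|^{1/2}=\sqrt{2|t|\log|t|}$, and adding the lower-order size of the tip region gives $d(t)=\sqrt{2|t|\log|t|}\,(1+o(1))$ as $t\to-\infty$.
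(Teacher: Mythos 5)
Your overall three-region strategy matches the paper's, but there is a genuine gap at the decisive point of the parabolic analysis: the exclusion of the unstable mode. You assert that if $v_+$ dominated, the renormalized flow would approach the cylinder exponentially fast, ``which is incompatible with compactness of $M_t$''. The exponential decay is correct, but the asserted incompatibility with compactness is not at all obvious and you give no argument for it: a priori a compact solution could be exponentially close to the cylinder on compact $\rho$-intervals while closing off only at a very large, rapidly growing $\bar d(\tau)$, and ruling this out is itself a nontrivial rigidity statement of exactly the kind the whole classification is meant to establish, so as stated the step is circular. The paper excludes the plus mode differently: it invokes the fine cylindrical theorem \cite[Theorem 6.4]{DH}, which in the unstable-dominant case produces a fine cylindrical vector $(a_1,\dots,a_k)$ singling out a direction in $\mathbb{R}^k$, and this contradicts the assumed $\mathrm{SO}(k)$-symmetry. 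You need either this input or a genuine substitute; ``exponential decay contradicts compactness'' is not one.

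Two smaller points where your sketch falls short of what is actually needed. (i) Intermediate region: the upper bound indeed follows the ADS route (the differential inequality for $u^2-2(n-k)$ integrated along the parabolic scaling), but for $k\ge 2$ the lower bound cannot be obtained from barriers ``built from the profile $\sqrt{(n-k)(2-\sigma^2)}$'' naively: rotating the ADS shrinkers $\Sigma_a$ about a $k$-dimensional axis destroys the shrinker property (an extra $\tfrac{k-1}{\rho}$ term appears), and the paper must shift them by $\eta$ and verify the barrier property only in $\{\rho\ge L_0+2(k-1)\eta^{-1}\}$ (Definition \ref{new_fol_def}, Lemma \ref{foli_lemma}, Corollary \ref{cor_inner_barrier}), then let $\eta(\tau)\to 0$ slowly; the same barriers are also what give the a priori bound behind the cylindrical radius function in the parabolic region, which you only gesture at. (ii) Tip region: the blow-up limit at the tip is not known to be a translator beforehand, so a ``classification of rotationally symmetric noncollapsed translators'' is not the right tool; the paper uses the $\mathrm{SO}(k)$-symmetry to split off $k-1$ lines and then applies the Brendle--Choi classification of noncompact ancient noncollapsed flows \cite{BC1,BC2}, with the speed $1/\sqrt{2}$ pinned down via Hamilton's Harnack inequality together with $\bar d(\tau)=\sqrt{2|\tau|}\,(1+o(1))$. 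Items (i) and (ii) are fixable along the paper's lines, but the plus-mode exclusion is a real missing ingredient in your argument.
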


Let us now outline the proof of sharp asymptotics in Theorem \ref{thm_shap_asympt}. Generally speaking, while the sharp asymptotics in \cite{ADS1} are based on fine neck analysis, our proof of Theorem \ref{thm_shap_asympt} is based on fine cylindrical analysis.

To begin with, observe that by \eqref{tangent_bubble_intro} we have
\begin{equation}
\lim_{\tau\to -\infty} u(\cdot,\tau)=\sqrt{2(n-k)}
\end{equation}
smoothly and locally uniformly. We then consider the function
\begin{equation}
v(\rho,\tau):=u(\rho,\tau)-\sqrt{2(n-k)}.
\end{equation}
The evolution of $v$ is governed by the radial Ornstein-Uhlenbeck operator
\begin{equation}\label{OU_intro}
    \mathcal L=\frac{\partial^2}{\partial \rho^2}+\frac{k-1}{\rho}\frac{\partial}{\partial \rho}-\frac{\rho}{2}\frac{\partial}{\partial \rho}+1.
\end{equation}
This operator is self-adjoint with respect to the Gaussian $L^2$-norm
\begin{equation}\label{def_norm_intro}
\|f\|_{\mathcal{H}}^2 = \int_0^\infty f(\rho)^2 e^{-\frac{\rho^2}{4}} \rho^{k-1}\, d\rho.
\end{equation}
The operator $\mathcal{L}$ has one positive eigenvalue $1$ with eigenfunction $1$, one zero eigenvalue with eigenfunction $\rho^2-2k$, and all other eigenvalues are negative.
Using results from \cite{DH}, we show  that the neutral eigenfunction
\begin{equation}
\rho^2-2k
\end{equation}
must be dominant for $\tau\to -\infty$. We next consider the truncated function
\begin{equation}
\hat{v}(\rho,\tau)=v(\rho,\tau)\varphi\left(\frac{\rho}{\rho(\tau)}\right)\, ,
\end{equation}
where $\varphi$ is a suitable cutoff function and $\rho(\tau)$ is a suitable cylindrical radius function, similarly to the neck radius function in \cite{CHH}. Carefully analyzing the evolution of the coefficient $\alpha(\tau)$ in the expansion
\begin{equation}
\hat{v}(\rho,\tau)= \alpha(\tau) (\rho^2-2k) + \ldots
\end{equation}
we then obtain the sharp asymptotics in the parabolic region.

Next, to promote information from the parabolic region to the intermediate region, we use suitable barriers and supersolutions. While the upper bound
\begin{equation}
\limsup_{\tau\rightarrow -\infty}\bar{u}(\sigma, \tau)\leq \sqrt{(n-k)(2-\sigma^2)}
\end{equation}
follows from a straightforward adaption of the argument in \cite[Section 6]{ADS1}, establishing a matching lower bound is more delicate. To this end, we take the $(n+1-k)$-dimensional barriers $\Sigma_a$ from \cite{ADS1} and shift them by $\eta$, and rotate them, to build $\mathrm{SO}(k)\times \mathrm{SO}(n+1-k)$-symmetric barriers $\Gamma_a^\eta$, related to the ones in \cite{CHH_wing,DH}. Carefully choosing $\eta=\eta(\tau)\to 0$ slowly enough as $\tau\to -\infty$, we are then able to derive the lower bound
\begin{equation}
\liminf_{\tau\rightarrow -\infty}\bar{u}(\sigma, \tau)\geq \sqrt{(n-k)(2-\sigma^2)}.
\end{equation}
In particular, recalling that $\rho=\sqrt{\log |t| }\sigma$, the sharp asymptotics in the intermediate region imply that $d(t)=\max_{M_t}|x'|$ for $t\to -\infty$ satisfies
\begin{equation}
d(t)=\sqrt{2|t|\log|t|}(1+o(1)).
\end{equation}
Finally, rescaling by $\lambda(t)=\sqrt{\log|t|/|t|}$ around any point $p_t\in M_t$ that maximizes $|x'|$ and passing to a limit, we obtain an ancient noncollapsed flow that splits off $k-1$ lines. Together with the classification from Brendle-Choi \cite{BC1,BC2}, this yields the sharp asymptotics in the tip region. This concludes the outline of the proof of Theorem \ref{thm_shap_asympt}.

\bigskip

In Section \ref{sec_uniqueness}, we upgrade the sharp asymptotics to uniqueness by adapting the argument from \cite[Sections 3--8]{ADS2} to our setting as follows:

Let $M_1(t)$ and $M_2(t)$ be two $\mathrm{SO}(k)\times \mathrm{SO}(n+1-k)$-symmetric ancient ovals. The goal is to show that, after suitable time-shift and parabolic dilation, the difference of the renormalized profile functions $w:=u_1-u_2$
vanishes, and so does the difference of the inverse profile functions
$W:=Y_1-Y_2$.

For estimating the difference, it is useful to analyze different regions in turn, similarly as in \cite{ADS2}. Specifically, fixing $\theta>0$ sufficiently small and $L<\infty$ sufficiently large, we consider the \emph{cylindrical region}
\begin{equation}
\mathcal{C}_\theta = \{ u_1\geq \theta/2\},
\end{equation}
and the \emph{tip region}
\begin{equation}
\mathcal{T}_\theta = \{u_1 \leq 2\theta\}.
\end{equation}
The tip region can be further decomposed as the union of the \emph{soliton region} $\mathcal{S}_L=\{ u_1\leq L/\sqrt{|\tau|}\}$ and the \emph{collar region} $\mathcal{K}_{\theta,L}=\{ L/\sqrt{|\tau|}\leq u_1 \leq 2\theta\}$.

The analysis in the cylindrical region is governed by the radial Ornstein-Uhlenbeck operator $\mathcal{L}$ from \eqref{OU_intro}.  To deal with the nonnegative eigenvalues, for any  $\beta, \gamma\in \mathbb{R}$ we consider the time-shifted and parabolically dilated flow
\begin{equation} 
M^{\beta\gamma}_2(t)=e^{\gamma/2}M_2(e^{-\gamma}(t-\beta)).
\end{equation}
We denote by $u_2^{\beta\gamma}$ the renormalized profile functions of $M_2^{\beta\gamma}(t)$. In Section \ref{sec_difference}, using a degree theory argument similarly as in \cite[Section 4]{ADS2}, we show that given any  $\tau_0\ll 0$, we can find suitable parameters $\beta,\gamma$ such that at time $\tau_0$ the difference $w=u_1-u_2^{\beta\gamma}$ satisfies the orthogonality condition
\begin{equation}
    \langle 1, \varphi_{\mathcal{C}}w \rangle_{\mathcal{H}}=0=\langle \rho^2-2k, \varphi_{\mathcal{C}}w\ \rangle_{\mathcal{H}},
\end{equation}
where $\varphi_{\mathcal{C}}$ is a suitable cutoff function that localizes in the cylindrical region. To establish uniqueness, it then suffices to prove:

\begin{theorem}[uniqueness]\label{thm_uniqueness_PDE}
If the parameters $\beta,\gamma$ are chosen as above, then the difference functions $w=u_1-u_2^{\beta\gamma}$ and $W=Y_1-Y_2^{\beta\gamma}$ vanish identically.
\end{theorem}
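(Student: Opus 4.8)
The plan is to carry out the three-region weighted energy argument of \cite[Sections 5--8]{ADS2}, adapted to the fact that in our setting the model at $\tau=-\infty$ is the higher-dimensional cylinder $\mathbb{R}^k\times S^{n-k}$, so that the cylindrical-region dynamics are governed by the \emph{radial} Ornstein--Uhlenbeck operator $\mathcal{L}$ of \eqref{OU_intro}, self-adjoint on the weighted space $\mathcal{H}$ of \eqref{def_norm_intro}. It suffices to prove $w=u_1-u_2^{\beta\gamma}\equiv 0$, since then also $W=Y_1-Y_2^{\beta\gamma}\equiv 0$. As a preliminary observation, the sharp asymptotics of Theorem \ref{thm_shap_asympt} are \emph{universal} --- the same for $M_1$ and for $M_2^{\beta\gamma}$ --- hence $w=o(|\tau|^{-1})$ on the parabolic region, $\bar u_1-\bar u_2^{\beta\gamma}\to 0$ on the intermediate region, and $Z_1-Z_2^{\beta\gamma}\to 0$ locally smoothly on the tip; in particular the combined norm
\[
\mathfrak{N}(\tau_0):=\sup_{\tau\le\tau_0}\Big(\|\varphi_{\mathcal{C}}\,w(\tau)\|_{\mathcal{H}}+\|W(\tau)\|_{\mathcal{T}_\theta}\Big),
\]
measured in appropriate weighted cylindrical and tip norms, is finite for $\tau_0\ll 0$. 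The goal is to show $\mathfrak{N}(\tau_0)\le\tfrac12\mathfrak{N}(\tau_0)$ once $\tau_0$ is sufficiently negative, which forces $\mathfrak{N}(\tau_0)=0$ and then, by forward uniqueness of smooth mean curvature flow, $w\equiv 0$ everywhere.

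Cylindrical region. Subtracting the two renormalized flow equations \eqref{profile_ren_intro}, \eqref{profile_intro} gives $\partial_\tau w=\mathcal{L}w+\mathcal{E}$ on $\mathcal{C}_\theta=\{u_1\ge\theta/2\}$, where $\mathcal{E}$ is a sum of terms that are either quadratic in $(w,w_\rho,w_{\rho\rho})$ or supported in $\{u_1\le\theta\}$, and whose coefficients all tend to $0$ as $\tau\to-\infty$ by Theorem \ref{thm_shap_asympt} (the quadratic ones because $w\to 0$). Writing $\hat w=\varphi_{\mathcal{C}}w=\hat w_++\hat w_0+\hat w_-$ for the projections onto the eigenvalue-$1$, eigenvalue-$0$, and negative-eigenvalue parts of $\mathcal{L}$, the orthogonality imposed at $\tau_0$ gives $\hat w_+(\tau_0)=\hat w_0(\tau_0)=0$; combining this with a Merle-Zaag-type analysis of the resulting ODE system (as in \cite[Section 5]{ADS2} and \cite{ADS1}) and the a priori decay $\hat w\to 0$, which prevents the stable part from growing backwards in $\tau$, one shows that $\hat w_+,\hat w_0$ are negligible for all $\tau\le\tau_0$ and that $\hat w_-$ is controlled by the error. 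Since $\mathcal{L}\le -1$ on the orthogonal complement of $\mathrm{span}\{1,\rho^2-2k\}$ in $\mathcal{H}$, a Duhamel/energy estimate yields
\[
\sup_{\tau\le\tau_0}\|\hat w(\tau)\|_{\mathcal{H}}\le\eps(\tau_0)\,\mathfrak{N}(\tau_0)+C\sup_{\tau\le\tau_0}\|w(\tau)\|_{L^\infty(\mathcal{K}_{\theta,L})},
\]
with $\eps(\tau_0)\to 0$, and parabolic interior estimates promote this to control of $w$, $w_\rho$, $w_{\rho\rho}$ on $\mathcal{C}_\theta$.

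Tip and collar regions. On $\mathcal{T}_\theta=\{u_1\le 2\theta\}$ we pass to the inverse profile functions: $W=Y_1-Y_2^{\beta\gamma}$ solves a uniformly parabolic equation whose linearization, after the $Z$-rescaling of Theorem \ref{thm_shap_asympt}, is a small perturbation of the Jacobi operator of the $(n+1-k)$-dimensional round bowl soliton $\bar Z$; that operator is coercive on the relevant weighted space once one accounts for its single bounded Jacobi field --- the axial translation, normalized away by the subtraction of $Y(0,\tau)$ in the definition of $Z$ --- so that, as in \cite[Sections 6--7]{ADS2}, one obtains
\[
\sup_{\tau\le\tau_0}\|W(\tau)\|_{\mathcal{T}_\theta}\le\eps(\tau_0)\,\mathfrak{N}(\tau_0)+C\sup_{\tau\le\tau_0}\|W(\tau)\|_{L^\infty(\mathcal{K}_{\theta,L})},
\]
the soliton-region contribution being absorbed using the smooth convergence $Z\to\bar Z$ on $\mathcal{S}_L=\{u_1\le L/\sqrt{|\tau|}\}$. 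Finally, on the collar $\mathcal{K}_{\theta,L}=\{L/\sqrt{|\tau|}\le u_1\le 2\theta\}$ one has the two-sided bound $L/\sqrt{|\tau|}\le u_1\le 2\theta$, so $u$ and $Y$ differ there only by a uniformly controlled reparametrization and $w$, $W$ are comparable; a short interpolation turns the collar term of the first estimate into $\eps(\tau_0)\mathfrak{N}(\tau_0)$ plus a small multiple of $\sup_{\tau\le\tau_0}\|W(\tau)\|_{\mathcal{T}_\theta}$, and conversely, so that all collar terms get absorbed into $\mathfrak{N}(\tau_0)$ with small constant.

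Conclusion and main obstacle. Adding the cylindrical and tip estimates and absorbing the $\eps(\tau_0)\mathfrak{N}(\tau_0)$ terms --- possible once $\tau_0$ is sufficiently negative --- gives $\mathfrak{N}(\tau_0)\le\tfrac12\mathfrak{N}(\tau_0)$, hence $\mathfrak{N}(\tau_0)=0$. Thus $w\equiv 0$ and $W\equiv 0$ for all $\tau\le\tau_0$, so $M_1$ and $M_2^{\beta\gamma}$ agree for all sufficiently negative times, and forward uniqueness of smooth mean curvature flow then gives $u_1\equiv u_2^{\beta\gamma}$ and $Y_1\equiv Y_2^{\beta\gamma}$ on the entire existence interval. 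I expect the main difficulty to lie in making the error $\mathcal{E}$ in the cylindrical region genuinely subcritical in the presence of $k$ flat directions --- in particular controlling the Hardy-type term $\tfrac{k-1}{\rho}w_\rho$ near $\rho=0$ and the cross terms forced by the weight $\rho^{k-1}$ in $\mathcal{H}$, and upgrading the weighted $L^2$ bound to the pointwise and derivative control needed for the collar matching --- together with verifying that the $(n+1-k)$-dimensional bowl Jacobi operator really has only the expected one-dimensional space of bounded Jacobi fields, so that the weighted tip estimate is coercive. One must also check that the Merle-Zaag dichotomy is compatible with the precise rate $o(|\tau|^{-1})$ of Theorem \ref{thm_shap_asympt}, which is what rules out the neutral mode $\rho^2-2k$ from dominating.
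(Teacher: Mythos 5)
Your overall three–region scheme (cylindrical/collar/tip energy estimates plus absorption) is the right framework and matches the paper's, but the sketch glosses over exactly the two points where the real work lies. First, the neutral mode: the orthogonality of Proposition \ref{+0} is imposed only at the single time $\tau_0$, and it does \emph{not} follow from ``Merle--Zaag plus a priori decay'' that the projection $\hat w_0$ onto $\rho^2-2k$ stays negligible for all $\tau\le\tau_0$. If you only know that the projected error is of size $\eps|\tau|^{-1}$ times the solution norm, integrating the neutral ODE backwards from $\tau_0$ loses a logarithm and the estimate does not close. What the paper actually does (Section \ref{sec_conclusion}) is derive the precise drift law $\frac{d}{d\tau}a(\tau)=\frac{2a(\tau)}{|\tau|}+F(\tau)$ for $a(\tau)=\langle w_{\mathcal C},\psi_0\rangle_{\mathcal H}$, where the coefficient $2/|\tau|$ comes from the quadratic nonlinearity evaluated on the sharp asymptotics (via $\langle\psi_0^2,\psi_0\rangle_{\mathcal H}=8c_0$, cf.\ \eqref{cubed_8}); the integrating factor $\tau^2$ together with $a(\tau_0)=0$ gives $a(\tau)=-\tau^{-2}\int_\tau^{\tau_0}\sigma^2F(\sigma)\,d\sigma$, and only then does the smallness $\int_{\tau-1}^{\tau}|F|\le\frac{\eps}{|\tau|}A(\tau)$ force $a\equiv0$. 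Without this structural input your cylindrical estimate controls only $\hat w_{\mathcal C}$ orthogonal to the neutral direction (as in Proposition \ref{cylindricalestimates}), not $w_{\mathcal C}$ itself.

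Second, the tip region: the mechanism is not coercivity of the Jacobi operator of the $(n+1-k)$-dimensional bowl (your claim about its ``single bounded Jacobi field'' is neither used nor verified in the paper). The actual estimate, Proposition \ref{Tip energy estimates}, comes from a weighted energy argument with the carefully constructed weight $e^{\mu}$, whose key feature is the Poincar\'e inequality of Corollary \ref{cor_poincare} with constant $C_0/|\tau|$; this is what produces the crucial factor $C/|\tau_0|$ in front of the transition term $\|W\chi_{[\theta,2\theta]}\|_{2,\infty}$ (your version, with an $O(1)$ constant times a collar sup-norm, would not be absorbable). Moreover, both the construction of $\mu$ and the bound $\bar G\le\eta|\tau|$ on the zeroth-order coefficients rely on the a priori estimates of Section \ref{sec_apriori} --- quadratic concavity $(u^2)_{\rho\rho}\le0$ (Proposition \ref{u2yyleq0}), the cylindrical estimate (Proposition \ref{eclosecylinder}), the almost-Gaussian collar (Corollary \ref{cor_gaussian_collar}) and the tip estimates (Lemma \ref{Yuestimates}), including the new term $\frac{k-1}{Y_1Y_2}$ specific to $k\ge2$ --- none of which appear in your outline. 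As it stands the proposal names the right regions and the right final absorption, but the neutral-mode ODE argument and the weighted Poincar\'e-based tip estimate (with its a priori inputs) are missing, and these are precisely the steps that make the theorem true.
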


The proof is based on energy estimates. In addition to the Gaussian $L^2$-norm from \eqref{def_norm_intro}, we also consider the Gaussian $H^1$-norm
\begin{equation}\label{def_norm_intro_h1}
\|f\|_{\mathcal{D}}^2 = \int_0^\infty \left(f(\rho)^2+f_\rho(\rho)^2\right) e^{-\frac{\rho^2}{4}} \rho^{k-1}\, d\rho,
\end{equation}
and the Gaussian $H^{-1}$-norm $\|f\|_{\mathcal{D}^\ast} = \sup_{\|g\|_{\mathcal{D}}\leq 1}\langle f,g\rangle$. Moreover, 
for time-dependent functions, we consider the corresponding parabolic norms
\begin{equation}
\|f\|_{\mathcal{X},\infty}:=\sup_{\tau\leq \tau_0} \left( \int_{\tau-1}^\tau \|f(\cdot, \sigma)\|_{\mathcal{X}}^2\, d\sigma\right)^{1/2},
\end{equation}
where $\mathcal{X}=\mathcal{H},\mathcal{D}$ or $\mathcal{D}^\ast$. Furthermore, in the tip region we consider the norm
\begin{equation}
    \|F\|_{2,\infty}:=\sup_{\tau\leq \tau_{0}}\frac{1}{|\tau|^{1/4}}\left(\int_{\tau-1}^{\tau}\int^{2\theta}_{0}F(u,\sigma)^2 e^{\mu(u,\sigma)}\, dud\sigma \right)^{1/2},
\end{equation}
where $\mu$ is a carefully chosen weight with $\mu(u,\tau)=-\tfrac{1}{4}Y_1(u,\tau)^2$ for $u\geq \theta/2$.

In Section \ref{sec_energy_cyl}, we prove that in the cylindrical region the truncated difference $w_{\mathcal{C}}=\varphi_{\mathcal{C}}w$ satisfies the energy estimate
\begin{equation}\label{energy_cyl_intro}
    \|\hat{w}_{\C}\|_{\mathcal{D,\infty}}\leq \varepsilon
    \left(\|{w}_{\C}\|_{\mathcal{D,\infty}}+\|w\chi_{D_{\theta}}\|_{\mathcal{H,\infty}}\right),
\end{equation}
where $D_{\theta}=\{ \theta/2\leq u_{1}\leq \theta\}$ and $\hat{w}_{\mathcal{C}}=w_\mathcal{C}-\frac{\langle w_\mathcal{C},\rho^2-2k\rangle_{\mathcal{H}} (\rho^2-2k)}{\| \rho^2-2k\|_{\mathcal{H}}^2}$. The proof is similar to the one in \cite[Section 6]{ADS2}. There are some extra terms, coming for example from the differential operator $\frac{k-1}{\rho}\partial_\rho$ for $k\geq 2$, but the energy method is robust enough to easily handle these extra terms.

In Section \ref{sec_energy_tip}, we prove that in the tip region the truncated difference $W_{\mathcal{T}}=\varphi_{\mathcal{T}}W$ satisfies the energy estimate
\begin{equation}\label{energy_tip_intro}
    \|W_{\mathcal{T}}\|_{2, \infty}\leq \frac{C}{|\tau_{0}|}\|W\chi_{[\theta, 2\theta]}\|_{2, \infty}.
\end{equation}
The proof is similar to the one in \cite[Section 7]{ADS2}. There are again some new terms, but the energy method is robust enough to handle them.

Finally, in Section \ref{sec_conclusion}, we conclude similarly as in \cite[Section 8]{ADS2}. Namely, combining the inequalities \eqref{energy_cyl_intro} and \eqref{energy_tip_intro}, and the equivalence of norms in the transition region, we conclude that $w$ and $W$ vanish identically.

The argument we sketched above crucially relies on certain a priori estimates. We  establish the needed a priori estimates in Section \ref{sec_apriori}. Specifically, the two main a priori estimates are the quadratic concavity estimate
\begin{equation}\label{quad_con_intro}
(u^2)_{\rho\rho}\leq 0,
\end{equation}
and the cylindrical estimate, which says that for $\tau\ll 0$ we have
\begin{equation}\label{intro_cyl_est_intro}
|u_\rho| + u|u_{\rho\rho}| \leq \eps
\end{equation}
at all points where $u(\rho,\tau)\geq L/\sqrt{|\tau|}$.
The proof of \eqref{quad_con_intro} relies on the maximum principle, and is thus less robust. For $k\geq 2$ there are some new terms compared to \cite[Section 5]{ADS2}, but fortunately it turns out that these new terms have the good sign. The proof of the cylindrical estimate  \eqref{intro_cyl_est_intro}  for $k=1$ in \cite{ADS1,ADS2} was based on a rather involved combination of the maximum principle and the Huisken-Sinestrari convexity estimate. Here, we observe instead that any suitable limit away from the tips splits off $k$ lines and apply \cite[Lemma 3.14]{HaslhoferKleiner_meanconvex}; this considerably simplifies the argument even for $k=1$.
\bigskip

In Section \ref{sec_nonuniqueness}, we prove the existence of ancient ovals with reduced symmetry by combining methods from \cite{White_nature,HaslhoferHershkovits_ancient} and Hoffman-Ilmanen-Martin-White \cite{HIMW}. The idea is to consider ovals that are obtained as limits of flows with ellipsoidal initial conditions, and to do a continuity argument and degeneration analysis in the ellipsoidal parameters. For example in $\mathbb{R}^4$ our construction would, loosely speaking, produce a one-parameter family of 3d-ovals that interpolates between $\mathbb{R}\times$2d-oval and 2d-oval$\times\mathbb{R}$.

To describe this in more detail, for any numbers $a_1,\ldots,a_k>0 $ with $a_1+\ldots+a_k=1$ and any $\ell<\infty$ consider the ellipsoid
\begin{equation}
E^{\ell,a}:=\left\{ x\in \mathbb{R}^{n+1} \, : \, \sum_{j\leq k} \frac{a_j^2}{\ell^2} x_j^2 + \sum_{j\geq k+1} x_j^2 = 2(n-k) \right\}\, .
\end{equation}
We choose time-shifts $t_{\ell,a}$ and a dilation factors $\lambda_{\ell,a}$ so that the flow
\begin{equation}
M^{\ell,a}_t := \lambda_{\ell,a} \cdot E^{\ell,a}_{\lambda^{-2}_{\ell,a} t+t_{\ell,a}}
\end{equation}
becomes extinct at time $0$ and satisfies 
\begin{equation}
\int_{M^{\ell,a}_{-1}} \frac{1}{(4\pi)^{n/2}}e^{-\frac{|x|^2}{4}}=\frac{\sigma_{n-k}+\sigma_{n-k+1}}{2},
\end{equation}
where $\sigma_j$ denotes the entropy of the $j$-sphere.
We then consider sequences $a_i=(a^i_1,\ldots,a^i_k)$ as above and $\ell_i\to \infty$ and introduce the class of flows
\begin{equation}
\mathcal{A}^\circ:=\left\{ \lim_{i\to \infty} M^{\ell_i,a_i}_t : \textrm{the limit along $a_i,\ell_i$ exists and is compact}\right\}\, .
\end{equation}
We prove that this gives ancient ovals with the desired properties, in particular that the class $\mathcal{A}^\circ$ contains a $(k-1)$-parameter family of geometrically distinct elements with prescribed values of the reciprocal width ratio:
\begin{theorem}[existence with reduced symmetry and further properties]\label{ovalfamily}
Given any numbers $\mu_1,\ldots,\mu_k>0 $ with $\mu_1+\ldots+\mu_k=1$ there exists an $M_t\in \mathcal{A}^\circ$ satisfying
\begin{equation}
    \frac{(\max_{x\in M_{-1}}|x_{j}|)^{-1}}{\sum_{j'=1}^{k}(\max_{x\in M_{-1}}|x_{j'}|)^{-1}}=\mu_{j}\quad \text{for}\,\, 1\leq j\leq k.
\end{equation}
In particular, whenever the reciprocal width ratio parameters $\mu_1,\ldots,\mu_k$ are not all equal to $1/k$, then the solution is not $\mathrm{O}(k) \times\mathrm{O}(n+1-k)$-symmetric.
Furthermore, for any $M_t\in \mathcal{A}^\circ$ we have the following properties:
\begin{itemize}
\item It is an ancient compact noncollapsed mean curvature flow.
\item It is $\mathbb{Z}_2^k\times\mathrm{O}(n+1-k)$-symmetric.
\item It is uniformly $(k+1)$-convex and the tangent flow at $-\infty$ is
\begin{equation*}
\lim_{\lambda\to 0}\lambda M_{\lambda^{-2}t} = \mathbb{R}^k\times S^{n-k}(\sqrt{2(n-k)|t|})
\end{equation*}
\item It becomes extinct at time $0$ and satisfies
\begin{equation*}
\int_{M^{\ell,a}_{-1}} \frac{1}{(4\pi)^{n/2}}e^{-\frac{|x|^2}{4}}=\frac{\sigma_{n-k}+\sigma_{n-k+1}}{2}.
\end{equation*}
\end{itemize}
\end{theorem}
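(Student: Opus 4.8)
The plan is to realize the desired ovals as sequential limits $M_t=\lim_i M^{\ell_i,a_i}_t$ of the normalized ellipsoidal flows, using Huisken's monotonicity formula together with the a priori estimates for noncollapsed flows to establish the listed properties, and to obtain the prescribed reciprocal width ratios by a topological degree argument in the parameter $a\in\Delta:=\{a\in\mathbb{R}^k_{>0}:\sum_j a_j=1\}$, in the spirit of \cite{White_nature,HaslhoferHershkovits_ancient,HIMW}. First, each $E^{\ell,a}$ is a smooth compact convex hypersurface, so $E^{\ell,a}_s$ is a smooth mean curvature flow becoming extinct at a round point at some time $T_{\ell,a}$ and is uniformly $\alpha(n)$-noncollapsed. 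Writing $\int_{M^{\ell,a}_{-1}}(4\pi)^{-n/2}e^{-|x|^2/4}$ as the Gaussian density of $E^{\ell,a}_s$ centered at the extinction point at scale $\lambda_{\ell,a}^{-1}$, Huisken monotonicity shows it is monotone in the scale, increasing from $\sigma_n$ up to the Gaussian density of the initial ellipsoid; since $\sigma_n<\tfrac{\sigma_{n-k}+\sigma_{n-k+1}}2$ and, for $\ell$ large, the latter density exceeds $\sigma_{n-k}>\tfrac{\sigma_{n-k}+\sigma_{n-k+1}}2$ (the flow spending a long interval near $\mathbb{R}^k\times S^{n-k}(\sqrt{2(n-k)|t|})$), the normalization has a unique solution depending continuously on $a$, with $M^{\ell,a}_t$ defined on $[-\lambda_{\ell,a}^2T_{\ell,a},0)$ and $\lambda_{\ell,a}^2T_{\ell,a}\to\infty$ as $\ell\to\infty$. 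The density normalization at $t=-1$, together with convexity and uniform noncollapsing, bounds $\operatorname{diam}M^{\ell,a}_{-1}$ uniformly, so the a priori estimates of \cite{HaslhoferKleiner_meanconvex} give uniform $C^\infty$ bounds near $(0,-1)$, and a subsequential limit yields a smooth $\alpha(n)$-noncollapsed weakly convex flow $M_t$, nonempty for all $t<0$ by Huisken monotonicity and compact for all $t<0$ since it is compact at $t=-1$. By the strong maximum principle it splits no line, hence is strictly convex, and it is not self-similar since a compact convex self-shrinker is a round sphere of density $\sigma_n\neq\tfrac{\sigma_{n-k}+\sigma_{n-k+1}}2$. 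Since $E^{\ell,a}$ is invariant under $x_j\mapsto -x_j$ for $j\le k$ and under $\mathrm{O}(n+1-k)$ on the last coordinates, so is $M_t$; this settles the first, second and last bullet points.

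For the third bullet, by Huisken's classification of noncollapsed self-shrinkers the tangent flow at $-\infty$ of $M_t$ is $\mathbb{R}^{k'}\times S^{n-k'}(\sqrt{2(n-k')|t|})$ for some $k'$, with density $\sigma_{n-k'}=\Theta_\infty:=\lim_{r\to\infty}\Theta(M,0,0,r)$. Huisken monotonicity gives $\sigma_{n-k'}=\Theta_\infty\ge\Theta(M,0,0,1)=\tfrac{\sigma_{n-k}+\sigma_{n-k+1}}2>\sigma_{n-k+1}$, hence $k'\ge k$; and $\Theta_\infty\le\lambda(M)\le\limsup_i\lambda(E^{\ell_i,a_i})=\sigma_{n-k}$, where the last equality (valid since $a_\infty\in\Delta$) is the entropy bound for nearly-cylindrical ellipsoids from \cite{HaslhoferHershkovits_ancient}, hence $k'\le k$. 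Therefore $k'=k$, which is the tangent-flow claim. For uniform $(k+1)$-convexity, suppose $\tfrac{\lambda_1+\dots+\lambda_{k+1}}{H}\to0$ along some $(p_i,t_i)$; since the tangent flow at $-\infty$ forces $H(\cdot,t)\to0$ uniformly as $t\to-\infty$, after rescaling to $H(p_i,t_i)=1$ the times $t_i$ stay bounded, so the translated flows converge to $M_t$ itself up to rigid motion. But then $\lambda_1+\dots+\lambda_{k+1}$ vanishes at an interior spacetime point of the ancient convex flow $M_t$, which by the strong maximum principle forces $M_t$ to split off $k+1$ lines, contradicting compactness. Hence $\lambda_1+\dots+\lambda_{k+1}\ge\beta H$ for some $\beta>0$.

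To get the prescribed reciprocal width ratios, for $\ell$ large I define the continuous map $\Psi^\ell:\Delta\to\Delta$ sending $a$ to the reciprocal width ratio $\bigl(\tfrac{(\max_{M^{\ell,a}_{-1}}|x_j|)^{-1}}{\sum_{j'}(\max_{M^{\ell,a}_{-1}}|x_{j'}|)^{-1}}\bigr)_{j\le k}$, using smooth dependence of mean curvature flow on compact initial data and continuity of the normalization. The decisive point is the boundary behaviour of $\Psi^\ell$: if $a_j\to 0$ then the $j$-th semi-axis of $E^{\ell,a}$ is the longest and stays width-maximizing under the flow — by convexity, the $\mathbb{Z}_2^k$-symmetry, and the maximum principle applied to the position of the tip on the $x_j$-axis — so $\max_{M^{\ell,a}_{-1}}|x_j|\to\infty$ relative to the others and $\Psi^\ell_j(a)\to 0$, uniformly for $\ell$ large. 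Hence, given any target $\mu\in\Delta$, a Poincar\'e--Miranda type argument produces $a^\ell\in\Delta$ with $\Psi^\ell(a^\ell)=\mu$; the uniform boundary estimate keeps the $a^{\ell_i}$ in a compact subset of $\Delta$, so subsequentially $a^{\ell_i}\to a_\infty\in\Delta$ and $M^{\ell_i,a^{\ell_i}}_t\to M_t\in\mathcal{A}^\circ$, and since the convergence at $t=-1$ is smooth on a fixed compact region the reciprocal width ratio of $M_{-1}$ is $\mu$. As $\mu$ ranges over the $(k-1)$-dimensional $\Delta$, and distinct multisets $\{\mu_j\}$ yield geometrically distinct normalized ovals (after our normalization the reciprocal width ratio is a geometric invariant, well-defined up to permuting the $x'$-axes), this produces the desired $(k-1)$-parameter family; whenever $\mu\neq(1/k,\dots,1/k)$ the $x'$-widths of $M_{-1}$ are not all equal, so $M_t$ is not $\mathrm{O}(k)\times\mathrm{O}(n+1-k)$-symmetric.

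I expect the main obstacle to be the last step, and within it the degeneration analysis as $a\to\partial\Delta$: making rigorous the ``widths stay ordered'' monotonicity and the uniform-in-$\ell$ boundary estimate for $\Psi^\ell$, so that the topological degree argument closes, and securing the uniform noncollapsing and $C^\infty$ estimates needed to pass to \emph{compact} limits (ruling out escape to a noncompact line-splitting solution when $a$ stays interior). By contrast, the identification of the tangent flow at $-\infty$ and the symmetry, convexity and extinction statements follow fairly directly from Huisken's monotonicity formula and the strong maximum principle.
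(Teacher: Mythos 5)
Your overall strategy (normalized ellipsoidal flows, compactness/limit arguments, and a topological argument in the parameter $a$ for the reciprocal width ratio map) is the same as the paper's, but there is a genuine gap exactly at the step you flag as the main obstacle: the boundary behaviour of $\Psi^{\ell}$ needed to run the Poincar\'e--Miranda/degree argument on the \emph{open} simplex. Your justification is that if $a_j\to 0$ the $x_j$-semi-axis is the longest and ``stays width-maximizing under the flow by the maximum principle,'' hence $\max_{M^{\ell,a}_{-1}}|x_j|\to\infty$ relative to the other widths. This is doubly problematic. First, persistence of the ordering of coordinate widths under mean curvature flow is not a standard maximum-principle fact: the natural comparison (the hypersurface versus its image under swapping $x_j\leftrightarrow x_{j'}$, or a moving-plane reflection across $\{x_j=x_{j'}\}$) fails because the two ellipsoids are not nested, so the avoidance principle does not apply, and no argument is given. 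Second, even granting that the $x_j$-width stays the largest, that only yields $\Psi^{\ell}_j(a)\leq 1/k$; it does not give $\Psi^{\ell}_j(a)\to 0$, let alone uniformly in $\ell$, which is what your Miranda argument and your later compactness claim ($a^{\ell_i}\to a_\infty$ in the interior, used for the entropy bound giving $k'\leq k$) both require. So the decisive quantitative boundary estimate is asserted rather than proved.

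The paper closes precisely this hole by a different device: it compactifies the parameter space to the closed simplex $\Delta_{k-1}$, allowing degenerate ellipsoids. When $a_j=0$, the flow $M^{\ell,a}_t$ is exactly a product with an $\mathbb{R}$-factor in the $x_j$-direction, so $w_j^{\ell}(a)^{-1}=0$ \emph{identically}; hence $F^{\ell}_k$ maps each face of $\partial\Delta_{k-1}$ into itself, and its restriction to the face $\{a_j=0\}$ is identified with $F^{\ell}_{k-1}$ one dimension lower. Surjectivity then follows by induction on the cell dimension together with the fact that a disc cannot be retracted onto its boundary, with no asymptotic or uniform-in-$\ell$ estimate needed; one then solves $F^{\ell_i}_k(a_i)=\mu_i$ with $\mu_i\to\mu\in\mathrm{Int}(\Delta_{k-1})$ and passes to a limit with the global convergence theorem. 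Relatedly, the paper obtains uniform $(k+1)$-convexity more simply than your blow-up argument (which moreover leans on the tangent-flow identification and on $a_\infty$ being interior): the initial ellipsoids satisfy $\lambda_1+\cdots+\lambda_{k+1}\geq\beta H$ with uniform $\beta$, and this is preserved under the flow, rescaling and limits; combined with the density normalization and the symmetries it also pins down the tangent flow at $-\infty$ without your entropy upper bound. I would recommend replacing your boundary analysis by this compactification/face-preservation argument, or else supplying a genuine proof of the uniform degeneration estimate you invoke.
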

To show this, we relax the condition that the ellipsoidal parameters have to be strictly positive, i.e. we now consider $a$ in the compact $(k-1)$-simplex $\Delta_{k-1}$, and study the  reciprocal width ratio map
\begin{equation}
F^{\ell}_k : \Delta_{k-1} \to \Delta_{k-1}, \quad a\mapsto \frac{(\sup_{x\in M^{a,\ell}_{-1}}|x_{j}|)^{-1}}{\sum_{j'=1}^{k}(\sup_{x\in M^{a,\ell}_{-1}}|x_{j'}|)^{-1}}\, .
\end{equation}
Analyzing degenerations and using induction on the cell dimension we prove that $F^{\ell}_k$ maps the boundary $\partial \Delta_{k-1}$ onto itself. Together with continuity this yields that $F^{\ell}_k$ is onto, and thus allows us to obtain existence with prescribed reciprocal width ratios. This concludes our outline of the proofs.\\
\bigskip

\noindent\textbf{Acknowledgments.}
This research was supported by the NSERC Discovery Grant and the Sloan Research Fellowship of the second author.

\bigskip

\section{Sharp asymptotics}\label{sec_sharp_asym}

The goal of this section is to prove Theorem \ref{thm_shap_asympt} (sharp asymptotics).

Let $M_t=\partial K_t$ be an ancient compact noncollapsed mean curvature flow in $\mathbb{R}^{n+1}$. By \cite[Theorem 1.14]{HaslhoferKleiner_meanconvex} the hypersurfaces $M_t$ are smooth and convex. Assuming the flow is $\mathrm{SO}(k)\times \mathrm{SO}(n+1-k)$-symmetric, but not a family of round shrinking spheres, it follows from \cite{HaslhoferKleiner_meanconvex} that (possibly after replacing $k$ by $n-k$) the tangent flow of $\{K_t\}$ at $-\infty$ is
\begin{equation}\label{neck_tangent}
\lim_{\lambda \rightarrow 0} \lambda K_{\lambda^{-2}t}=\mathbb{R}^{k}\times D^{n+1-k}(\sqrt{2(n-k)t}).
\end{equation}
We can assume throughout the paper that $k\geq 2$, since the case $k=1$ has already been dealt with in \cite{ADS1,ADS2}.
We will often write points in $\mathbb{R}^{n+1}$ in the form $x=(x', x'')$, where
\begin{equation}
x'=(x_{1},\dots,x_k) \qquad \textrm{and} \qquad x''=(x_{k+1},\dots,x_{n+1}).
\end{equation}
We consider the renormalized flow
\begin{equation}
\bar M_\tau = e^{\frac{\tau}{2}} \,M_{-e^{-\tau}}.
\end{equation}
Then, as $\tau\rightarrow -\infty$ the hypersurfaces $\bar M_\tau$ converges to the cylinder
\begin{equation}\label{ren_tang}
\Gamma=\mathbb{R}^k\times S^{n-k}({\sqrt{2(n-k)}}).
\end{equation}
Similarly as in the introduction, we denote by $u(\rho,\tau)$ the renormalized profile function. In other words
\begin{equation}
\bar{M}_\tau = \left\{ (y',y'')\in \mathbb{R}^k \times \mathbb{R}^{n+1-k}\, : |y''|= u(|y'|,\tau), \, |y'|\leq \bar{d}(\tau)\right\}\, ,
\end{equation}
where $\bar{d}(\tau)= e^{\frac{\tau}{2}} d(-e^{-\tau})$. In particular, thanks to \eqref{ren_tang} we have
\begin{equation}\label{conv_to_cyl}
\lim_{\tau\to -\infty} u(\cdot, \tau)= \sqrt{2(n-k)}
\end{equation}
smoothly and uniformly on compact sets.

\subsection{Foliations and barriers}\label{sec_prelim}

In this section, we discuss some foliations and barriers that we will use frequently. This is closely related to \cite{CHH_wing,DH}, but for convenience of the reader we give a self-contained exposition.

We recall from Angenent-Daskalopoulos-Sesum \cite[Figure 1 and Section 8]{ADS1} that there is some $L_0>1$ such that  for every $a\geq L_0$ and $b>0$, there are $d$-dimensional
shrinkers in $\mathbb{R}^{d+1}$,
\begin{align}
{\Sigma}_a &= \{ \textrm{surface of revolution with profile } r=u_a(y_1), 0\leq y_1 \leq a\},\\
\tilde{{\Sigma}}_b &= \{ \textrm{surface of revolution with profile } r=\tilde{u}_b(y_1), 0\leq y_1 <\infty\}.\nonumber
\end{align}
 Here, the parameter $a$ captures where the  concave function $u_a$ meets the $y_1$-axis, namely $u_a(a)=0$, and the parameter $b$ is the asymptotic slope of the convex function $\tilde{u}_b$, namely $\lim_{y_1\to \infty} \tilde{u}_b'(y_1)=b$.

We now choose $d=n+1-k$, and shift and rotate these $d$-dimensional surfaces to construct a suitable foliation in $\mathbb{R}^{n+1}$:

\begin{definition}[{cylindrical foliation, c.f. \cite{CHH_wing,DH}}]\label{new_fol_def}
Given $\eta>0$, for every $a\geq L_{0}$, we denote by $\Gamma_{a}^\eta$ the $\mathrm{SO}(k)\times \mathrm{SO}(n+1-k)$-symmetric hypersurface in $\mathbb{R}^{n+1}$ given by
\begin{equation}
    \Gamma_{a}^\eta=\{(y', y''): (|y'|-\eta, y'')\in \Sigma_{a}\}.
\end{equation}
Similarly, for every $b>0$, we denote by $\tilde{\Gamma}_{b}^\eta$ the
$\mathrm{SO}(k)\times \mathrm{SO}(n+1-k)$-symmetric hypersurface in $\mathbb{R}^{n+1}$ given by
\begin{equation}
    \tilde{\Gamma}_{b}^\eta=\{(y', y''): (|y'|-\eta, y'')\in \tilde{\Sigma}_{b}\}.
\end{equation}
\end{definition}
Now, we set $L_1=L_0+2(k-1)\eta^{-1}$.
\begin{lemma}[{foliation lemma, c.f. \cite{CHH_wing,DH}}]\label{foli_lemma}
There exist constants $a_0> L_0$, $b_0<\infty$ and $\delta>0$ such that the families of hypersurfaces $\{{\Gamma}_a^\eta\}_{a\geq a_0}$ and $\{{\tilde{\Gamma}}_b^\eta\}_{b\leq b_0}$ together with the cylinder ${\Gamma}=\mathbb{R}^{k}\times S^{n-k}(\sqrt{2(n-k)})$ foliate the region
\begin{equation*}
\Omega=\left\{(y_1,\ldots,y_{n+1}) |  y_1^2+\ldots+y_{k}^2\geq L_1^2,\, y_{k+1}^2+\ldots + y_{n+1}^2 \leq 2(n-k)+\delta\right\}.
\end{equation*}
Moreover, denoting by $\nu_{\mathrm{fol}}$ the outward unit normal of this foliation, we have
\begin{equation}\label{negative_div}
\mathrm{div}(\nu_{\mathrm{fol}}e^{-|y|^2/4})\leq 0\;\;\;\textrm{inside the cylinder},
\end{equation}
and
\begin{equation}\label{positive_div}
\mathrm{div}(\nu_{\mathrm{fol}}e^{-|y|^2/4})\geq 0\;\;\;\textrm{outside the cylinder}.
\end{equation}
\end{lemma}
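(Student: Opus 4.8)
The plan is to reduce the multi-dimensional statement to the one-dimensional shrinker picture of Angenent--Daskalopoulos--Sesum by exploiting the product structure of the ambient region $\Omega$ under the $\mathrm{SO}(k)\times\mathrm{SO}(n+1-k)$-symmetry. Write $r=|y'|$ and $z=|y''|$, so that $\Omega$ corresponds to $\{r\geq \sqrt{L_1^2}=L_1,\ z\leq \sqrt{2(n-k)+\delta}\}$ in the $(r,z)$-quarter-plane, and note that $\Gamma_a^\eta$ is the graph $z=u_a(r-\eta)$ over $r\in[\eta+0,\eta+a]$, $\tilde\Gamma_b^\eta$ the graph $z=\tilde u_b(r-\eta)$, and $\Gamma$ the line $z=\sqrt{2(n-k)}$. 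The first step is the foliation property itself. In the $(r,z)$-plane, $\{\Sigma_a\}_{a\geq a_0}$ and $\{\tilde\Sigma_b\}_{b\leq b_0}$ together with the horizontal line $z=\sqrt{2(n-k)}$ already foliate a neighborhood of the end of $\Sigma_{L_0}$ — this is precisely the content of \cite[Section 8]{ADS1}, which provides $a_0,b_0$ and the fact that the shrinkers $\Sigma_a$ for $a$ near $a_0$ lie just inside the cylinder while $\tilde\Sigma_b$ for small $b$ lie just outside, with the leaves depending continuously (indeed smoothly) on the parameter and sweeping out a full two-sided neighborhood. Translating the whole picture by $\eta$ in the $r$-direction and choosing $\delta>0$ small enough and $L_1=L_0+2(k-1)\eta^{-1}$ large enough that the region $\{r\geq L_1,\ z\leq\sqrt{2(n-k)+\delta}\}$ sits inside the translated foliated neighborhood, we get that $\{\Gamma_a^\eta\},\{\tilde\Gamma_b^\eta\},\Gamma$ foliate $\Omega$. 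The choice $L_1=L_0+2(k-1)\eta^{-1}$ will be dictated by the sign computation below, not by the foliation step per se.

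The second and main step is the two divergence inequalities \eqref{negative_div} and \eqref{positive_div}. The key observation is the relation between the weighted mean curvature of the rotated-and-shifted hypersurface $\Gamma_a^\eta\subset\mathbb{R}^{n+1}$ and that of the original shrinker $\Sigma_a\subset\mathbb{R}^{n+1-k}$ (thought of with its $d=n+1-k$ rotational symmetry, i.e. as a hypersurface in $\mathbb{R}^{d+1}$, $d+1=n+2-k$ — here I will use the profile formulation to avoid dimension bookkeeping). Parametrize a leaf by its profile $z=f(r)$ (with $f=u_a(\cdot-\eta)$, $\tilde u_b(\cdot-\eta)$, or constant). A direct computation gives
\begin{equation}\label{div_computation}
\mathrm{div}\big(\nu_{\mathrm{fol}}\,e^{-|y|^2/4}\big)=\Big(-H_{\mathrm{fol}}+\tfrac{1}{2}\langle y,\nu_{\mathrm{fol}}\rangle\Big)e^{-|y|^2/4},
\end{equation}
where $H_{\mathrm{fol}}$ is the mean curvature of the leaf with respect to $\nu_{\mathrm{fol}}$ (I will orient $\nu_{\mathrm{fol}}$ to point away from the axis $\{y''=0\}$, so that on the cylinder $\Gamma$ it points outward). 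For a profile $z=f(r)$ the unnormalized quantity $-H_{\mathrm{fol}}+\tfrac12\langle y,\nu_{\mathrm{fol}}\rangle$, after clearing the positive factor $(1+f'^2)^{-1/2}$, equals
\begin{equation}\label{shrinker_op}
\frac{f''}{1+f'^2}-\frac{n-k}{f}+\frac{(k-1)}{r}\,f'-\frac{r f'}{2}+\frac{f}{2}\ \ +\ \ \frac{(k-1)}{r}f'\cdot(\text{sign/size bookkeeping}),
\end{equation}
where the genuinely new term compared to the $\Sigma_a$-equation is the curvature contribution $\tfrac{k-1}{r}f'$ coming from the extra $(k-1)$ flat rotational directions in the $y'$-block. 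Since $\Sigma_a$ is an exact shrinker, $u_a$ satisfies $\tfrac{u_a''}{1+u_a'^2}-\tfrac{n-k}{u_a}-\tfrac{s u_a'}{2}+\tfrac{u_a}{2}=0$ where $s=y_1$; substituting $f(r)=u_a(r-\eta)$, $s=r-\eta$, the terms combine so that the left-hand side of \eqref{shrinker_op} reduces to $\tfrac{k-1}{r}u_a'(r-\eta)-\tfrac{\eta}{2}u_a'(r-\eta)+(\text{tiny from }\tfrac{u_a}{2}\text{ vs }\tfrac{f}{2})$. Now $u_a$ is concave with $u_a(a)=0$, so $u_a'<0$ on the relevant range, and $\tfrac{k-1}{r}-\tfrac{\eta}{2}<0$ exactly when $r>2(k-1)\eta^{-1}$, which holds on $\Omega$ by the choice $L_1=L_0+2(k-1)\eta^{-1}$; hence this combination is $\leq 0$, giving \eqref{negative_div} on the leaves $\Gamma_a^\eta$ (which lie inside the cylinder). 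Symmetrically, for $\tilde u_b$ convex and increasing with $\tilde u_b'>0$, the analogous combination has a definite sign the other way — here one uses that $\tilde\Sigma_b$ is also an exact shrinker and that the leaves $\tilde\Gamma_b^\eta$ lie outside the cylinder — yielding \eqref{positive_div}. On the cylinder leaf $\Gamma$ itself, $f\equiv\sqrt{2(n-k)}$, $f'=0$, and $-H+\tfrac12\langle y,\nu\rangle=-\tfrac{n-k}{\sqrt{2(n-k)}}+\tfrac12\sqrt{2(n-k)}=0$, consistent with both inequalities degenerating to equality there.

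The third step is to collect the constants: choose $\delta$ small enough that (i) all the shrinker estimates of \cite[Section 8]{ADS1} hold in the enlarged slab $z\leq\sqrt{2(n-k)+\delta}$, and (ii) the error terms I suppressed above (the $\tfrac{f}{2}-\tfrac{u_a}{2}$ discrepancy, which is $O(\eta\cdot u_a')$ and $O(u_a)$-controlled, and similarly for $\tilde u_b$) do not overwhelm the main negative term $(\tfrac{k-1}{r}-\tfrac\eta2)u_a'$; since on $\Omega$ we have $r\geq L_1$ with $L_1\to\infty$ as $\eta\to0$ is \emph{not} what happens — rather $\eta$ is fixed first, then $L_1$ is determined — the margin $\tfrac{k-1}{r}-\tfrac\eta2\leq \tfrac{k-1}{L_1}-\tfrac\eta2=-\tfrac{(k-1)}{L_1}\cdot\tfrac{2(k-1)\eta^{-1}}{L_1}<-c(\eta)<0$ is uniform, so a sufficiently small $\delta$ absorbs the errors. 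Set $a_0,b_0$ to be the ADS constants (possibly shrunk/enlarged), and the lemma follows.

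I expect the main obstacle to be the sign bookkeeping in step two: one must track exactly how the extra term $\tfrac{k-1}{r}f'$ interacts with the shifted shrinker equation and verify that, after the substitution $f(r)=u_a(r-\eta)$, the residual is a \emph{negative} multiple of $u_a'$ plus controllably small errors — i.e., that the shift by $\eta$ and the choice of $L_1$ are calibrated so the bad term from the shift never dominates. The convexity/concavity of $\tilde u_b$ and $u_a$ respectively, together with the monotonicity of their derivatives, is what makes the signs work out, and pinning down that these qualitative features of the ADS shrinkers persist on the slightly enlarged slab (controlled by $\delta$) is the delicate part. The rest — formula \eqref{div_computation}, continuity of the foliation in the parameter, and the extraction of the numerical constants — is routine given \cite{ADS1,CHH_wing,DH}.
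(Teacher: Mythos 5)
Your proposal takes essentially the same route as the paper: quote the Angenent--Daskalopoulos--Sesum foliation of the slab by $\Sigma_a$, $\tilde\Sigma_b$ and the cylinder, transplant it by the shift and rotation, and verify the divergence inequalities by comparing the weighted mean curvature of a leaf with the shrinker equation of its profile, the only genuinely new term being $\tfrac{k-1}{r}f'$, whose coefficient is beaten by the shift term $\tfrac{\eta}{2}$ precisely because $r\geq L_1\geq 2(k-1)\eta^{-1}$ on $\Omega$ (the paper phrases the same computation via $H_{\Gamma_*}=H_{\Sigma_*}+\tfrac{k-1}{y_1}\langle e_1,\nu_{\mathrm{fol}}\rangle$ together with the sign of $\langle e_1,\nu_{\mathrm{fol}}\rangle$, which is equivalent to your profile formulation). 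Two sign slips in your write-up happen to cancel and should be fixed: with $\nu_{\mathrm{fol}}$ the outward normal and $H=\mathrm{div}\,\nu_{\mathrm{fol}}$ (positive on convex hypersurfaces), the correct identity is $\mathrm{div}(\nu_{\mathrm{fol}}e^{-|y|^2/4})=\big(H-\tfrac12\langle y,\nu_{\mathrm{fol}}\rangle\big)e^{-|y|^2/4}$, not $-H+\tfrac12\langle y,\nu_{\mathrm{fol}}\rangle$ times the weight; and after substituting $f(r)=u_a(r-\eta)$ the residual $\big(\tfrac{k-1}{r}-\tfrac{\eta}{2}\big)u_a'(r-\eta)$ is the product of two nonpositive factors, hence $\geq 0$ (a nonnegative renormalized normal speed), not $\leq 0$ as you assert; it is this nonnegativity that, via the correct identity, yields \eqref{negative_div} on the inner leaves, and symmetrically $\tilde u_b'>0$ yields \eqref{positive_div} on the outer ones, so your conclusions are right even though the two intermediate signs are wrong. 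Finally, your ``third step'' is moot: the substitution into the shifted shrinker equation is exact (there is no $\tfrac{f}{2}$ versus $\tfrac{u_a}{2}$ discrepancy, since the ODE is evaluated at $s=r-\eta$), so no smallness of $\delta$ is needed to absorb error terms; $\delta$ enters only through the foliation statement inherited from \cite{ADS1}.
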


\begin{proof}
By \cite[Lemma 4.9 and 4.10]{ADS1} there exist constants $a_0> L_0$, $b_0<\infty$ and $\delta>0$ such that the shrinkers $\{{\Sigma}_a\}_{a\geq a_0}$ and $\{{\tilde{\Sigma}}_b\}_{b\leq b_0}$ together with the cylinder ${\Sigma}=\{ x_2^2+\ldots x_{d+1}^2 = 2(d-1)\}\subset \mathbb{R}^{d+1}$ foliate the region
\begin{equation*}
\left\{(y_1,\ldots,y_{d+1}) |  y_1\geq L_0 ,\, y_{2}^2+\ldots + y_{d+1}^2 \leq 2(d-1)+\delta\right\}.
\end{equation*}
Hence, by Definition \ref{new_fol_def} our hypersurfaces foliate the region $\Omega$.\\
Now, observe that for every element in $\Gamma_{*}$ in the foliation of $\Omega$, we have
\begin{equation}
    \mathrm{div}(\nu_{\mathrm{fol}}e^{-|y|^2/4})=\big(H_{\Gamma_{*}}-\frac{1}{2}\left\langle y, \nu_{\mathrm{fol}} \right\rangle\big)e^{-|y|^2/4}\, .
\end{equation}
By symmetry, it suffices to compute the curvatures $H_{\Gamma_{*}}$ of $\Gamma_{*}$ in the region $\{y_{1}>0, y_{2}=\dots=y_{k}=0\}$,
where we can identify points and unit normals in $\Gamma_{*}$ with the corresponding ones in $\Sigma_{*}$, by disregarding the $y_{2}, \dots, y_{k}$ components. The relation between the mean curvature of a surface $\Sigma_{*}$ and its (unshifted) rotation $\Gamma_{*}\subset \mathbb{R}^{n+1}$ on points with $y_{1}>0, y_{2}=\dots=y_{k}=0$ is given by
\begin{equation}\label{relation}
    H_{\Gamma_{*}}=H_{\Sigma_{*}}+\frac{k-1}{y_{1}}\left\langle e_{1}, \nu_{\mathrm{fol}} \right\rangle,
\end{equation}
where $e_{1}=(1, 0, \dots, 0)\in \mathbb{R}^{n-k+2}$.
For the shrinkers $\Sigma_a$, the concavity of $u_{a}$ implies  $\left\langle e_{1}, \nu_{\mathrm{fol}} \right\rangle\geq 0$, so we infer that
\begin{equation}
    H_{\Gamma_{a}^\eta}=\frac{1}{2}\left\langle y-\eta e_{1},  \nu_{\mathrm{fol}} \right\rangle+\frac{k-1}{y_{1}}\left\langle e_{1}, \nu_{\mathrm{fol}} \right\rangle\leq \frac{1}{2}  \left\langle y, \nu_{\mathrm{fol}} \right\rangle,
\end{equation}
since in $\Omega\cap \{y_{1}>0, y_{2}=\dots=y_{k}=0\}$ we have $y_{1}\geq L_{1}\geq 2(k-1)\eta^{-1}$.\\
For the shrinkers $\tilde{\Sigma}_b$, the convexity of $\tilde{u}_{b}$ implies that  $\left\langle e_{1}, \nu_{\mathrm{fol}} \right\rangle\leq 0$, so similarly we have
\begin{equation}
    H_{\tilde{\Gamma}_{b}^\eta}\geq \frac{1}{2}  \left\langle y, \nu_{\mathrm{fol}} \right\rangle.
\end{equation}
This proves the lemma.
\end{proof}

\begin{corollary}[{inner barriers, c.f.  \cite{CHH_wing,DH}}]\label{cor_inner_barrier}
Let $\{K_{\tau}\}_{\tau\in [\tau_1,\tau_2]}$ be compact domains, whose boundary evolves by renormalized mean curvature flow. If ${\Gamma}_a^\eta$ is contained in $K_{\tau}$ for every $\tau\in [\tau_1,\tau_2]$, and $\partial K_{\tau}\cap {\Gamma}_a^\eta=\emptyset$ for all $\tau<\tau_2$, then
\begin{equation}
\partial K_{\tau_2}\cap {\Gamma}_a^\eta\subseteq \partial {\Gamma}_a^\eta.
\end{equation}
\end{corollary}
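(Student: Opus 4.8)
The statement to prove is Corollary \ref{cor_inner_barrier}, the inner barrier principle. Here is my plan.

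\medskip

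The plan is to deduce this from the avoidance principle (also known as the comparison principle) for mean curvature flow, transported to the renormalized setting. First I would recall that the renormalized mean curvature flow is, up to the time reparametrization $\tau = -\log(-t)$ and the rescaling $\bar M_\tau = e^{\tau/2} M_{-e^{-\tau}}$, equivalent to the ordinary mean curvature flow; in particular, two disjoint compact hypersurfaces evolving by renormalized flow remain disjoint, and more generally if one renormalized flow starts inside a fixed hypersurface $\Sigma$ satisfying $H_\Sigma \leq \tfrac12\langle y,\nu\rangle$ (so that $\Sigma$ is a supersolution, i.e.\ a \emph{subsolution for the region it bounds} — an inner barrier), then it stays inside. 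The key input from Lemma \ref{foli_lemma} is precisely that the leaves $\Gamma_a^\eta$ lying inside the cylinder satisfy $\mathrm{div}(\nu_{\mathrm{fol}} e^{-|y|^2/4}) \leq 0$, equivalently $H_{\Gamma_a^\eta} \leq \tfrac12\langle y, \nu_{\mathrm{fol}}\rangle$, which says exactly that $\Gamma_a^\eta$ is a static inner barrier for the renormalized flow: it moves outward no faster (in fact it is stationary) than what the flow equation would dictate, so a renormalized flow that contains it cannot have its boundary cross it from the inside.

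\medskip

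The main step is then a contradiction argument using the first time of contact. Suppose the conclusion fails, so there is a point $p \in \partial K_{\tau_2} \cap \Gamma_a^\eta$ with $p \notin \partial\Gamma_a^\eta$, i.e.\ $p$ lies in the relative interior of the leaf $\Gamma_a^\eta$. By hypothesis $\partial K_\tau \cap \Gamma_a^\eta = \emptyset$ for $\tau < \tau_2$, and $\Gamma_a^\eta \subseteq K_\tau$ for all $\tau \in [\tau_1,\tau_2]$; combining these, $\Gamma_a^\eta$ lies in the open interior of $K_\tau$ for $\tau < \tau_2$, and touches $\partial K_{\tau_2}$ at the interior point $p$. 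At such a first interior contact point the two hypersurfaces $\partial K_{\tau_2}$ and $\Gamma_a^\eta$ are tangent at $p$ with $\partial K_{\tau_2}$ on the outer side, so the mean curvature comparison $H_{\partial K_{\tau_2}}(p) \geq H_{\Gamma_a^\eta}(p)$ holds (curvature of the inner surface dominates); meanwhile the renormalized flow equation gives the normal speed of $\partial K_\tau$ at $p$ as $-H_{\partial K_{\tau_2}}(p) + \tfrac12\langle p, \nu\rangle$ in the direction $\nu = \nu_{\mathrm{fol}}(p)$, and since $\Gamma_a^\eta$ satisfies $H_{\Gamma_a^\eta}(p) \leq \tfrac12\langle p, \nu\rangle$ we get that $\partial K_\tau$ is moving inward (or is stationary) relative to the leaf at $p$ — contradicting that $p$ was reached only at time $\tau_2$ and not before, unless the contact is degenerate. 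To make this rigorous one invokes the strong maximum principle: since $\Gamma_a^\eta$ is a smooth strict-or-weak inner barrier, a genuine interior touching at time $\tau_2$ would force $\partial K_\tau$ to coincide with $\Gamma_a^\eta$ on an open set for $\tau$ slightly less than $\tau_2$, again contradicting $\partial K_\tau \cap \Gamma_a^\eta = \emptyset$ for $\tau < \tau_2$.

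\medskip

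The main obstacle, and the only place that needs care, is the bookkeeping at the boundary $\partial\Gamma_a^\eta$ of the leaf: the leaves $\Gamma_a^\eta$ are compact with boundary (the profile $u_a$ meets the axis at $y_1 = a$, but also the leaf only occupies the region $\Omega$), so the flow $\partial K_\tau$ could in principle touch $\Gamma_a^\eta$ along $\partial\Gamma_a^\eta$ without any contradiction — which is exactly why the conclusion only asserts $\partial K_{\tau_2}\cap \Gamma_a^\eta \subseteq \partial\Gamma_a^\eta$. So the argument above must be applied only at interior points of the leaf, and one needs to observe that the contact set, being closed, if nonempty and not contained in $\partial\Gamma_a^\eta$, contains an interior point, to which the maximum principle applies. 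A secondary technicality is justifying the reduction from renormalized flow to ordinary flow and the fact that the static barrier $\Gamma_a^\eta$ (which solves $H = \tfrac12\langle y,\nu\rangle$ only as an inequality) is a legitimate weak subsolution in the interior — both are standard, and one can cite the avoidance principle together with \eqref{negative_div} of Lemma \ref{foli_lemma}.
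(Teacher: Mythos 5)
Your proposal follows exactly the paper's route: the paper's entire proof consists of the observation that, by Lemma \ref{foli_lemma} (i.e.\ by \eqref{negative_div}), the renormalized-flow velocity vector $\vec{H}+\tfrac{y^\perp}{2}$ points outwards of ${\Gamma}_a^\eta$, so the leaf is a static inner barrier, followed by an appeal to the maximum principle; your first-contact argument, the remark that only interior points of the leaf are relevant (which is why the conclusion allows contact along $\partial\Gamma_a^\eta$), and the final strong-maximum-principle step are precisely the details the authors leave implicit.

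One local correction is needed in your touching-point computation. At an interior tangency $p$ with $\Gamma_a^\eta\subseteq K_{\tau_2}$, writing both surfaces as graphs over the common tangent plane with height in the direction of the common outward normal $\nu$, the leaf lies below $\partial K_{\tau_2}$, so with the convention $\vec H=-H\nu$ the comparison is $H_{\partial K_{\tau_2}}(p)\leq H_{\Gamma_a^\eta}(p)$ --- this matches your parenthetical ``curvature of the inner surface dominates'' but is the reverse of the inequality you displayed, and with your displayed inequality the chain does not close. With the correct sign, the flow equation and \eqref{negative_div} give that the normal speed of $\partial K_\tau$ at $p$ satisfies $-H_{\partial K_{\tau_2}}(p)+\tfrac12\langle p,\nu\rangle\geq -H_{\Gamma_a^\eta}(p)+\tfrac12\langle p,\nu\rangle\geq 0$, i.e.\ the boundary is moving outward or is stationary at $p$; on the other hand, since the leaf was disjoint from $\partial K_\tau$ for $\tau<\tau_2$ and is enclosed, first contact forces the normal speed at $p$ to be nonpositive. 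Note that ``moving inward at $p$'' would not by itself be a contradiction --- that is exactly what a first contact from outside looks like; the contradiction is the forced equality of speeds, after which the strong maximum principle (applied to the difference of the two graphs, the static leaf being a subsolution) propagates the contact to earlier times, contradicting $\partial K_\tau\cap\Gamma_a^\eta=\emptyset$ for $\tau<\tau_2$. Since your final step already defers to the strong maximum principle, this is a repairable slip rather than a structural flaw, but as written the intermediate deduction does not stand.
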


\begin{proof}
Lemma \ref{foli_lemma} implies that the vector $\vec{H}+\frac{y^{\perp}}{2}$
points outwards of ${\Gamma}_a^\eta$. The result now follows from the maximum principle.
\end{proof}
\bigskip

\subsection{Parabolic region}
The goal of this subsection is to establish the sharp asymptotics in the parabolic region.

\bigskip

Our first aim is to find a suitable cylindrical radius function. Note that thanks to \eqref{conv_to_cyl} there exists a smooth positive function $\rho_{0}(\tau)$ satisfying
\begin{equation}\label{univ_fns}
\lim_{\tau \to -\infty} \rho_{0}(\tau)=\infty\quad \textrm{and}\quad -\rho_{0}(\tau) \leq \rho_{0}'(\tau) \leq 0,
\end{equation}
such that for all $\tau\ll 0$ we have
\begin{equation}\label{small_graph}
\|{u}(\cdot,\tau)-\sqrt{2(n-k)}\|_{C^{2n}([0,2\rho(\tau)])}\leq  \rho_{0}(\tau)^{-2}.
\end{equation}
In the following discussion, we let
\begin{equation}
v(\rho,\tau):=u(\rho,\tau)-\sqrt{2(n-k)}.
\end{equation}
Since $\bar{M}_\tau$ moves by renormalized mean curvature flow, the evolution of the graph function $v$ is governed by the radial Ornstein-Uhlenbeck operator
\begin{equation}\label{OU}
    \mathcal L=\frac{\partial^2}{\partial \rho^2}+\frac{k-1}{\rho}\frac{\partial}{\partial \rho}-\frac{\rho}{2}\frac{\partial}{\partial \rho}+1.
\end{equation}
Denote by $\mathcal{H}$ the Hilbert space of functions $f$ on $\mathbb{R}_+$ satisfying
\begin{equation}\label{def_norm}
\|f\|_{\mathcal{H}}^2 = \int_0^\infty f^2 e^{-\frac{\rho^2}{4}} \rho^{k-1}\, d\rho\,  < \infty.
\end{equation}
Analyzing the spectrum of $\mathcal L$, we can decompose the Hilbert space $\mathcal H$  as
\begin{equation}
\mathcal H = \mathcal{H}_+\oplus \mathcal{H}_0\oplus \mathcal{H}_-,
\end{equation}
where
\begin{align}
\mathcal{H}_+ =\text{span}\{1\},\label{basis_hplus}
\end{align}
and
\begin{align}
\mathcal{H}_0 =\text{span}\{\rho^2-2k\}.\label{basis_hneutral}
\end{align}

We fix a smooth nonnegative cutoff function $\varphi$ satisfying $\varphi(s)=1$ for $s \leq 1$ and $\varphi(s)=0$ for $s \geq 2$, and consider the quantities
\begin{equation}
    \alpha(\tau)=\left(\int_0^\infty {v}^2(\rho,\tau)\varphi^2\left(\frac{\rho}{\rho_{0}(\tau)}\right)e^{-\frac{\rho^2}{4}}\rho^{k-1}\, d\rho\right)^{1/2},
\end{equation}
and
\begin{equation}
    \beta(\tau)=\sup_{\sigma\leq \tau}\alpha(\sigma).
\end{equation}
Moreover, we set
\begin{equation}
    \rho(\tau)=\beta(\tau)^{-\frac{1}{5}}.
\end{equation}

\begin{proposition}[cylindrical radius]\label{improved radius}
The function $\rho(\tau)$ is an admissible cylindrical radius function. Namely, we have
\begin{equation}
\lim_{\tau \to -\infty} \rho(\tau)=\infty,
\end{equation}
and for $\tau \ll 0$ it holds that
\begin{equation}\label{newuniv_fns}
 -\rho(\tau) \leq \rho'(\tau) \leq 0
\end{equation}
and
\begin{equation}\label{newsmall_graph}
\|{v}(\cdot,\tau)\|_{C^{2n}([0,2\rho(\tau)])} \leq  \rho(\tau)^{-2}.
\end{equation}
\end{proposition}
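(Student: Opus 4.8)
My plan is to verify the three asserted properties of $\rho(\tau)$ in turn; the limit $\rho(\tau)\to\infty$ and the one-sided bound $\rho'(\tau)\le 0$ are soft, the bound $\rho'(\tau)\ge-\rho(\tau)$ rests on an energy estimate, and the $C^{2n}$-bound \eqref{newsmall_graph} is the crux. For the first two, I would start from the trivial consequence of \eqref{small_graph} that $|v(\rho,\tau)|\le\rho_0(\tau)^{-2}$ on the support of $\varphi(\cdot/\rho_0(\tau))$, so that
\begin{equation*}
\alpha(\tau)^2\le\rho_0(\tau)^{-4}\int_0^\infty e^{-\rho^2/4}\rho^{k-1}\,d\rho = c_k\,\rho_0(\tau)^{-4},
\end{equation*}
which tends to $0$ as $\tau\to-\infty$ by \eqref{univ_fns}; hence $\beta(\tau)\to 0$ and therefore $\rho(\tau)=\beta(\tau)^{-1/5}\to\infty$. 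Since $\beta$ is a running supremum it is non-decreasing, and it is locally Lipschitz because $\alpha$ is smooth in $\tau$; thus $\rho=\beta^{-1/5}$ is non-increasing and $\rho'\le 0$. Throughout, the pointwise derivative inequalities are understood almost everywhere, which is all that is used later.

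For the logarithmic Lipschitz bound I would first observe that $\rho'\ge-\rho$ is equivalent to $\beta'\le 5\beta$, and since $\beta'=0$ almost everywhere on $\{\beta>\alpha\}$ while $\beta'\le(\alpha')^+$ almost everywhere on $\{\beta=\alpha\}$, it suffices to prove $\alpha'\le 5\alpha$ for $\tau\ll 0$. The key is an energy estimate: differentiating $\alpha^2$ and using that $v$ solves $\partial_\tau v=\mathcal{L}v+E$ with $\mathcal{L}$ as in \eqref{OU} and $E$ the quadratic-and-higher error (of size $O(v^2+v_\rho^2\,|v_{\rho\rho}|)$ coming from $\tfrac{u_{\rho\rho}}{1+u_\rho^2}$ and the Taylor expansion of $\tfrac{u}{2}-\tfrac{n-k}{u}$ at $\sqrt{2(n-k)}$), I would test the equation against $\varphi^2 v\, e^{-\rho^2/4}\rho^{k-1}$. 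Self-adjointness of $\mathcal{L}$ together with the fact that its top eigenvalue equals $1$ produces the leading term $2\|\varphi v\|_{\mathcal H}^2=2\alpha^2$; the commutator terms generated by the cutoff and by $\partial_\tau\varphi(\rho/\rho_0(\tau))$ --- the latter estimated via $|\rho_0'|\le\rho_0$ from \eqref{univ_fns} --- are all supported in $\{\rho_0(\tau)\le\rho\le 2\rho_0(\tau)\}$, where the Gaussian weight is super-exponentially small, and the error integral $\langle\varphi^2 v,E\rangle$ is lower order by \eqref{small_graph}. This yields $\tfrac{d}{d\tau}\alpha^2\le 2\alpha^2+(\text{lower-order and super-exponentially small terms})$, hence $\alpha'\le 5\alpha$ for $\tau\ll 0$, with the margin provided exactly by the exponent $1/5$. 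The only genuinely new feature compared with the neck radius function in \cite{CHH} is the term $\tfrac{k-1}{\rho}\partial_\rho$ in $\mathcal L$, which contributes only commutator terms supported near $\rho\sim\rho_0(\tau)$ and is therefore harmless.

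The main obstacle is the $C^{2n}$-bound \eqref{newsmall_graph}: one must upgrade the Gaussian $L^2$-smallness $\|\varphi(\cdot/\rho_0(\sigma))\,v(\cdot,\sigma)\|_{\mathcal H}\le\beta(\tau)$, valid for \emph{all} $\sigma\le\tau$, to pointwise $C^{2n}$-control on the interval $[0,2\rho(\tau)]$, which is typically much larger than the region where \eqref{small_graph} is available. Following the strategy behind \cite{CHH} and using the results of \cite{DH}, I would decompose $\varphi v$ along $\mathcal H=\mathcal H_+\oplus\mathcal H_0\oplus\mathcal H_-$ as in \eqref{basis_hplus}, \eqref{basis_hneutral}: a Merle-Zaag type alternative, together with the fact that $\mathcal H_+$ cannot be dominant for an ancient flow converging to the cylinder, shows that $v$ is controlled, on any fixed compact set and with error $\lesssim\beta(\tau)$, by its projection onto $\mathrm{span}\{1,\rho^2-2k\}$, whose coefficients are themselves $\lesssim\beta(\tau)$. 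Combining this dominance with interior parabolic estimates and the coefficient bounds from \eqref{small_graph}, one obtains a $C^{2n}$-bound on $[0,2R]$ that grows only polynomially in $R$ and linearly in $\beta(\tau)$, for $R$ up to a fixed power of $\beta(\tau)^{-1}$; the exponent $1/5$ is then chosen precisely so that $R=2\rho(\tau)=2\beta(\tau)^{-1/5}$ lies safely inside this range and $\beta(\tau)\,\rho(\tau)^2=\beta(\tau)^{3/5}\le\beta(\tau)^{2/5}=\rho(\tau)^{-2}$ for $\tau\ll 0$, which is \eqref{newsmall_graph}. The delicate point, where I expect most of the work to go, is making the passage from the $L^2$-scale $\rho_0(\tau)$ to the pointwise scale $\rho(\tau)$ quantitative, and this is exactly what the spectral dominance of the neutral mode $\rho^2-2k$, in the spirit of \cite{DH}, is needed for.
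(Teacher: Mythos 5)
Your treatment of the first two assertions is essentially fine: the pointwise bound from \eqref{small_graph} on the support of the cutoff does give $\alpha(\tau)\to 0$, hence $\rho(\tau)\to\infty$ (this is even a bit more elementary than the paper, which invokes the iterated inverse Poincar\'e inequality of \cite{DH} here), and monotonicity of $\beta$ gives $\rho'\leq 0$. The genuine gap is in \eqref{newsmall_graph}. You propose to upgrade Gaussian $L^2$ dominance of the neutral mode to pointwise $C^{2n}$ control on $[0,2\rho(\tau)]$ via interior parabolic estimates, with a $C^{2n}$ bound "growing polynomially in $R$" for $R$ up to a power of $\beta^{-1}$. This cannot work as stated: the $\mathcal{H}$-norm carries the weight $e^{-\rho^2/4}$, so $\|\hat v\|_{\mathcal H}\lesssim\beta$ gives essentially no information at $\rho\sim\beta^{-1/5}$, where the weight is of size $e^{-\beta^{-2/5}/4}$; nor does concavity alone suffice (a chord bound only controls the deficit by $O(\rho/\bar d(\tau))$, which is not $O(\rho(\tau)^{-2})$ at $\rho\sim\beta^{-1/5}$ even using the a posteriori rate $\bar d\sim\sqrt{2|\tau|}$, $\beta\sim|\tau|^{-1}$). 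The missing mechanism, which is the heart of the paper's proof (Claim \ref{claim_barrier}), is a barrier argument: parabolic estimates give $\|v(\cdot,\tau)\|_{L^\infty([0,2L_1])}\leq K\beta(\tau)$ on a fixed compact set, and then the inner barriers $\Gamma_a^\eta$ built from the ADS shrinkers (Definition \ref{new_fol_def}, Corollary \ref{cor_inner_barrier}) with parameter $a\sim (K\beta)^{-1/2}$ slide under the renormalized flow and yield the lower bound $v\geq -C\beta^{1/2}$ on the much larger interval $\rho\leq c\beta^{-1/4}$; convexity then gives the matching upper bound, and since $2\rho(\tau)=2\beta^{-1/5}\ll c\beta^{-1/4}$ and $C\beta^{1/2}\leq\beta^{2/5}=\rho(\tau)^{-2}$, derivative estimates give \eqref{newsmall_graph}. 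Without some such propagation device (barriers, or another quantitative geometric input), the spectral information cannot reach the scale $\rho(\tau)$.

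A secondary issue concerns \eqref{newuniv_fns}. Your soft route via the top eigenvalue of $\mathcal L$ (rather than the paper's stronger statement $\tfrac{d}{d\tau}\alpha^2=o(\alpha^2)$, obtained from the Merle--Zaag alternative after ruling out the unstable mode by symmetry) is a legitimate shortcut, but your disposal of the cutoff/commutator terms is not: "super-exponentially small" is smallness in absolute terms, whereas what is needed is smallness relative to $\alpha(\tau)^2$, and a priori $\alpha(\tau)$ could be far smaller than $e^{-\rho_0(\tau)^2/8}$. The correct input is the relative error bound $\|\hat E\|_{\mathcal H}\leq C\rho_0^{-1}\|\hat v\|_{\mathcal H}$ of \cite[Proposition 4.2]{DH} (whose proof controls the transition annulus by $\|\hat v\|_{\mathcal H}$ via the inverse Poincar\'e inequality); with that cited, your differential inequality gives $\alpha'\leq\tfrac32\alpha$, hence $\beta'\leq 5\beta$ and \eqref{newuniv_fns}.
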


\begin{proof}
First, iterating the inverse Poincare inequality from \cite[Proposition 4.1]{DH}, similarly as in \cite[Lemma 4.17]{CHH}, we see that
\begin{equation}
\int_0^\infty v^2(\rho,\tau)\varphi^2\left(\frac{\rho}{\rho_{0}(\tau)}\right)e^{-\frac{\rho^2}{4}}\rho^{k-1}\, d\rho \leq C \int_{0}^{L_1} v^2(\rho,\tau)e^{-\frac{\rho^2}{4}}\rho^{k-1}\, d\rho.
\end{equation}
Since the right hand side converges to $0$ as $\tau\to -\infty$, this implies
\begin{equation}
\lim_{\tau\to -\infty}\rho(\tau)=\infty.
\end{equation}
Also, note that $\rho(\tau)$ is monotone as a direct consequence of the definitions.\\
Next, by \cite[Proposition 4.2]{DH} the truncated graph function
\begin{equation}
\hat{v}(\rho,\tau)={v}(\rho,\tau)\varphi\left(\frac{\rho}{\rho_0(\tau)}\right)
\end{equation}
satisfies the evolution equation\footnote{Due to symmetry, the fine-tuning rotation $S$ is simply the identity matrix.}
\begin{equation}
 \hat{v}_\tau = \mathcal{L} \hat{v}+\hat{E},
\end{equation}
where the error term can be estimated by
\begin{equation}
\| \hat{E} \|_{\mathcal{H}}\leq C\rho_0^{-1}\|\hat{v} \|_{\mathcal{H}}.
\end{equation}
By the Merle-Zaag alternative \cite[Proposition 4.3]{DH}, for $\tau\to -\infty$, either the unstable mode from \eqref{basis_hplus} is dominant or the neutral mode from \eqref{basis_hneutral} is dominant. In the former case, by \cite[Theorem 6.4]{DH} associated to our solution, there would be a fine cylindrical vector $(a_1,\ldots,a_k)$ pointing in some specific direction; this contradicts our symmetry assumption. Hence, the neutral mode must be dominant. This implies
\begin{equation}
\frac{d}{d\tau}\alpha^2 = o(\alpha^2).
\end{equation}
In particular, for $\tau\ll 0$ this gives
\begin{equation}
-\rho(\tau)\leq \rho'(\tau).
\end{equation}
Finally, to prove \eqref{newsmall_graph}, we need the following claim:

\begin{claim}[{barrier estimate, c.f. \cite[Proposition 4.18]{CHH}}]\label{claim_barrier}
There are constants $c>0$ and $C<\infty$ such that
\begin{equation}
|v(\rho,\tau)|\leq C\beta(\tau)^{1/2}
\end{equation}
holds for $\rho\leq c \beta(\tau)^{-1/4}$ and $\tau\ll 0$.
\end{claim}

\begin{proof}[Proof of Claim \ref{claim_barrier}]
By parabolic estimates (c.f. \cite[Appendix A]{CHH_wing}) there is a constant $K<\infty$ such that for $\tau\ll 0$ we have
\begin{equation}\label{Kbeta}
    \|v(\cdot,\tau)\|_{L^{\infty}([0,2L_{1}])}\leq K\beta(\tau).
\end{equation}
Now, given $\hat{\tau}\ll 0$, consider the hypersurface $\Gamma_a^\eta$ from the cylindrical foliation (Definition \ref{new_fol_def}) with parameters $\eta=1$ and
\begin{equation}
    a=c_{0}(K\beta(\hat{\tau}))^{-1/2}.
\end{equation}
By \cite[Lemma 4.4]{ADS1}, provided we choose $c_{0}>0$ small enough, the profile function $u_a$ of the shrinker $\Sigma_a$ satisfies
\begin{equation}\label{upper}
    u_{a}(L_1-1)\leq \sqrt{2(n-k)}-K\beta.
\end{equation}
Combining this with \eqref{Kbeta}, the inner barrier principle (Corollary \ref{cor_inner_barrier}) implies that $\Gamma_a^\eta$ is enclosed by $\bar{M}_\tau$ for $\rho\geq L_1$ and $\tau\leq\hat{\tau}$. Since $u_{a}^2(\sqrt{a})\geq 2(n-k)-\frac{2}{a}$, c.f. \cite[equation (195)]{CHH}, this yields
\begin{equation}
   v(\rho, \tau)\geq -C\beta(\tau)^{\frac{1}{2}}
\end{equation}
for $\rho\leq c\beta^{-\frac{1}{4}}$ and $\tau\ll 0$. Finally, by convexity, remembering also \eqref{Kbeta}, the lower bound implies a corresponding upper bound. This finishes the proof of the claim.
\end{proof}
Using Claim \ref{claim_barrier} together with convexity and standard derivative estimates, and recalling also that $\rho=\beta^{-1/5}$, we conclude that
\begin{equation}
\|v(\cdot,\tau)\|_{C^{2n}([0,2\rho(\tau)])} \leq  \rho(\tau)^{-2}
\end{equation}
for all $\tau\ll 0$. This finishes the proof of the proposition.
 \end{proof}

From now on, we work with the truncated graph function
\begin{equation}
\hat{v}(\rho,\tau)=v(\rho,\tau)\varphi\left(\frac{\rho}{\rho(\tau)}\right)\, ,
\end{equation}
where $\rho(\tau)$ denotes the cylindrical radius function from Proposition \ref{improved radius}.
\bigskip

\begin{proposition}[parabolic region]\label{prop_parabolic}
The function $\hat{v}$ satisfies
\begin{equation}\label{inw_quad1}
\lim_{\tau\to -\infty} \left\|  |\tau| \hat{v}(\rho,\tau) + \frac{\sqrt{2(n-k)}(\rho^2-2k)}{4} \right\|_{\mathcal{H}}=0\, .
\end{equation}
Moreover, there exists $\mathcal{T}>-\infty$ and an increasing function $\delta:(-\infty,\mathcal{T})\to (0,1/100)$ with $\lim_{\tau\to -\infty}\delta(\tau)=0$ such that for $\tau\leq \mathcal{T}$ we have
    \begin{equation}\label{inw_quad2}
\sup_{\rho\leq \delta^{-1}(\tau)} \left|   v(\rho,\tau) + \frac{\sqrt{2(n-k)}(\rho^2-2k)}{4|\tau|} \right|\leq \frac{\delta(\tau)}{|\tau|}.
    \end{equation}
\end{proposition}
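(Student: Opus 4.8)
The plan is to analyze the evolution of the truncated graph function $\hat v$ and show that its dynamics are governed, to leading order, by the neutral eigenfunction $\rho^2-2k$ with a coefficient $\alpha(\tau)$ whose ODE forces the precise decay rate $\alpha(\tau)\sim -\tfrac{\sqrt{2(n-k)}}{4|\tau|}\|\rho^2-2k\|_{\mathcal H}$. Concretely, write $\hat v(\rho,\tau)=\alpha(\tau)\psi_0(\rho)+\hat v_\perp(\rho,\tau)$, where $\psi_0$ is the normalized neutral eigenfunction (a multiple of $\rho^2-2k$) and $\hat v_\perp$ is orthogonal to $\mathcal H_0$. From Proposition \ref{improved radius} we have the quantitative cylindrical estimate $\|v(\cdot,\tau)\|_{C^{2n}([0,2\rho(\tau)])}\le \rho(\tau)^{-2}$, and the evolution equation $\hat v_\tau=\mathcal L\hat v+\hat E$ with $\|\hat E\|_{\mathcal H}\le C\rho^{-1}\|\hat v\|_{\mathcal H}$ carries over verbatim (replacing $\rho_0$ by the improved $\rho$). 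Since we already know the neutral mode dominates (Merle–Zaag alternative plus the symmetry assumption ruling out the unstable mode, as in the proof of Proposition \ref{improved radius}), we get $\|\hat v_\perp\|_{\mathcal H}=o(\alpha)$ and $\|\hat v_+\|_{\mathcal H}=o(\alpha)$.

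The key computation is then to extract the next-order term. One projects the evolution equation onto $\psi_0$ and onto $\mathcal H_+$. Because $\mathcal L\psi_0=0$, the leading linear term vanishes and $\alpha'(\tau)$ is driven by the quadratic term in the equation for $v$ (the genuine PDE \eqref{profile_intro}/\eqref{profile_ren_intro} has a nonlinearity; expanding $u=\sqrt{2(n-k)}+v$, the quadratic part contributes a definite-sign correction). Carrying out the Taylor expansion of the right-hand side of the renormalized profile equation around the constant $\sqrt{2(n-k)}$, collecting the quadratic-in-$v$ terms, and pairing against $\psi_0$ using the Hermite-type identities for the radial Ornstein–Uhlenbeck operator (integration against the Gaussian weight $e^{-\rho^2/4}\rho^{k-1}$, which makes $1,\rho^2-2k,\dots$ into an orthogonal polynomial system), one obtains an ODE of the form $\alpha'(\tau)=-c\,\alpha(\tau)^2+o(\alpha^2)$ with $c>0$ an explicit constant; this is exactly the mechanism in \cite{ADS1} adapted to $k\ge 2$, and the $\tfrac{k-1}{\rho}\partial_\rho$ term only shifts the numerical value of the constants (the polynomial eigenfunctions and the value $\langle\psi_0^2,\psi_0\rangle$-type integrals change, but the structure does not). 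Solving $\alpha'=-c\alpha^2(1+o(1))$ gives $\alpha(\tau)=\tfrac{1}{c|\tau|}(1+o(1))$; matching the constant $c$ against the normalization of $\psi_0$ reproduces the claimed coefficient $-\tfrac{\sqrt{2(n-k)}}{4}$, which yields \eqref{inw_quad1} once one also notes $\|\hat v_\perp\|_{\mathcal H}=o(|\tau|^{-1})$ (a spectral-gap argument: the orthogonal part decays faster because the next eigenvalue is strictly negative).

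To pass from the $\mathcal H$-norm statement \eqref{inw_quad1} to the pointwise statement \eqref{inw_quad2} on the growing interval $\rho\le \delta^{-1}(\tau)$, the plan is to combine \eqref{inw_quad1} with interior parabolic regularity (standard derivative estimates, as already invoked in the proof of Claim \ref{claim_barrier} and Proposition \ref{improved radius}) to upgrade $L^2$-closeness to $C^0_{loc}$-closeness on fixed compact sets, and then to use the barrier/foliation machinery of Section \ref{sec_prelim} (the cylindrical foliation $\{\Gamma_a^\eta\}$, $\{\tilde\Gamma_b^\eta\}$ together with Corollary \ref{cor_inner_barrier} and its outer counterpart) to control $v$ on the intermediate range $M\le\rho\le\delta^{-1}(\tau)$. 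The point is that the shrinker profiles $u_a$ have the expansion $u_a(\rho)^2=2(n-k)-\tfrac{\rho^2}{2|\tau|}+\ldots$ matching the predicted quadratic behavior, so choosing $a=a(\tau)$ appropriately sandwiches $v$ between super- and subsolutions within the stated error $\delta(\tau)/|\tau|$; the admissible rate $\delta(\tau)\to 0$ is then read off from how slowly one must let the barrier parameters degenerate. Convexity (the $C^{2n}$ estimate and $(u^2)_{\rho\rho}\le 0$ in the cylindrical region) keeps the error from the cutoff $\varphi(\rho/\rho(\tau))$ negligible, since $\rho(\tau)=\beta(\tau)^{-1/5}\to\infty$ much faster than any $\delta^{-1}(\tau)$ we need. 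The main obstacle I expect is the bookkeeping in the quadratic-term projection: one must verify that all the genuinely new contributions for $k\ge 2$ (from $\tfrac{k-1}{\rho}U_\rho$ and from cross terms in the expansion of $\tfrac{n-k}{U}$ and $\tfrac{U_{rr}}{1+U_r^2}$) either integrate to the correct constant or are of lower order, and that the error term $\hat E$ and the commutator with the cutoff really are $o(|\tau|^{-2})$ in $\mathcal H$ so as not to spoil the second-order asymptotics — this is where the precise choice $\rho=\beta^{-1/5}$ and the inverse-Poincaré iteration from \cite{DH} are essential.
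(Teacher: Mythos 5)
Your route is the paper's route: use Merle--Zaag plus the $\mathrm{SO}(k)$-symmetry to make the neutral mode $\rho^2-2k$ dominant, project the evolution of $\hat v$ (renormalized equation with quadratic nonlinearity $-\tfrac{1}{2\sqrt{2(n-k)}}\hat v^2$ plus a small truncation error) onto $\psi_0$, solve the resulting Riccati-type ODE, identify the constant via the orthogonal-polynomial identity $\langle\psi_0^2,\psi_0\rangle_{\mathcal H}=8c_0$, and finish with parabolic estimates. However, two steps of your ODE analysis have genuine gaps. First, the sign of the neutral coefficient: you write $\alpha'=-c\alpha^2+o(\alpha^2)$ and conclude $\alpha(\tau)=\tfrac{1}{c|\tau|}(1+o(1))$, then match this to the negative coefficient $-\tfrac{\sqrt{2(n-k)}}{4}$; these are inconsistent (the ancient solution of $\alpha'=-c\alpha^2$ tending to $0$ as $\tau\to-\infty$ is $\alpha\sim-\tfrac{1}{c|\tau|}$), and, more importantly, nothing in your argument determines the sign at all. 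Neutral-mode dominance does not distinguish $\hat v/\|\hat v\|_{\mathcal H}\to+\psi_0$ from $\to-\psi_0$. In the paper this is supplied by \cite[Proposition 5.1]{DH}: every subsequential limit of $\hat v/\|\hat v\|_{\mathcal H}$, viewed on $\mathbb{R}^k$, is of the form $yQy^\top-2\,\mathrm{tr}\,Q$ with $Q$ semi-negative definite, and symmetry forces all eigenvalues of $Q$ to agree, giving the full convergence \eqref{full_conv_in} and in particular $\alpha_0<0$ for $\tau\ll0$. Without this ``inward quadratic bending'' input the asymptotics \eqref{inw_quad1} is not pinned down.

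Second, the error in the projected ODE is naturally of size $o(\beta^2)$ with $\beta(\tau)=\sup_{\sigma\le\tau}|\alpha(\sigma)|$ (all truncation and Merle--Zaag errors are controlled by running suprema), not $o(\alpha(\tau)^2)$; a priori $|\alpha(\tau)|$ could be much smaller than $\beta(\tau)$ at some times, so writing the error as $o(\alpha^2)$ from the outset assumes what must be proved. The paper closes this via the open-and-closed argument on $I=\{\tau\le\mathcal T:\,-\alpha_0(\tau)=\beta_0(\tau)\}$ in Claim \ref{claim_alpha_zero}, which again uses the sign $\alpha_0<0$; some such argument is needed in your write-up. A minor further remark: for \eqref{inw_quad2} the barrier/foliation step on the range $M\le\rho\le\delta^{-1}(\tau)$ is not needed, and at the claimed precision $\delta(\tau)/|\tau|$ it would be delicate to make work; since $\delta(\tau)$ may tend to zero arbitrarily slowly, locally uniform $C^0$-convergence of $|\tau|v$ on compact sets (from \eqref{inw_quad1}, interior estimates, and the fact that $\hat v=v$ for $\rho\le\rho(\tau)\to\infty$) combined with a diagonal choice of $\delta(\tau)$ already yields \eqref{inw_quad2}, which is all the paper means by ``standard parabolic estimates.''
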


\begin{proof}
By \cite[Proposition 5.1]{DH}, temporarily viewing $\hat{v}(\cdot,\tau)$ as a function on $\mathbb{R}^k$, every sequence $\tau_i\to -\infty$ has a subsequence $\tau_{i_m}$ such that
\begin{equation}
\lim_{\tau_{i_m}\to -\infty} \frac{\hat{v}(\cdot,\tau_{i_m})}{\|\hat{v}(\cdot,\tau_{i_m})\|_{\mathcal{H}}} = y Q y^\top-2\textrm{tr} Q,
\end{equation}
in $\mathcal{H}$-norm, where $Q=\{q_{ij}\}$ is a normalized semi-negative definite $k\times k$ matrix. By symmetry, all eigenvalues of $Q$ must be equal. Hence, the subsequential convergence entails full convergence, and we have
\begin{equation}\label{full_conv_in}
\lim_{\tau \to -\infty} \frac{\hat{v}(\cdot,\tau)}{\|\hat{v}(\cdot,\tau)\|_{\mathcal{H}}} = -\psi_0
\end{equation}
in $\mathcal{H}$-norm, where
\begin{equation}
\psi_0=c_0\left(\rho^2-2k\right),
\end{equation}
with
\begin{equation}
c_0=\|\rho^2-2k\|_{\mathcal{H}}^{-1}.
\end{equation}
Now, we consider the evolution of the coefficient
\begin{equation}
\alpha_0:=\langle \hat{v},\psi_0\rangle_{\mathcal{H}}.
\end{equation}
Similarly as in \cite[Lemma 4.20 and Proposition 4.21]{CHH}, we have
\begin{equation}
 \hat{v}_\tau= \mathcal{L}\hat{v}-\frac{1}{2\sqrt{2(n-k)}}\hat{v}^2+E,
\end{equation}
where 
\begin{equation}
    |\langle E, \psi_{0} \rangle_{\mathcal{H}}|\leq C\beta^{2+\frac{1}{5}}.
\end{equation}
Using also $\mathcal{L}\psi_0=0$, we infer that
\begin{align}\label{ev_eq_alpha_0}
    \frac{d}{d\tau}\alpha_{0}= -\frac{1}{2\sqrt{2(n-k)}} \langle \hat{v}^2,\psi_0\rangle_{\mathcal{H}} + \langle E, \psi_{0} \rangle_{\mathcal{H}}
=     -c\alpha^2_{0}+o(\beta^2),
\end{align}
where
\begin{equation}
c =\frac{\langle \psi_0^2,\psi_0\rangle_{\mathcal{H}}}{2\sqrt{2(n-k)}}.
\end{equation}

\begin{claim}[expansion coefficient]\label{claim_alpha_zero}
For $\tau\to -\infty$, we have
\begin{equation}\label{alpha ode}
\alpha_0(\tau)= \frac{-1}{c|\tau|}+o(|\tau|^{-1}).
\end{equation}
\end{claim}

\begin{proof}[Proof of Claim \ref{claim_alpha_zero}]
By \eqref{full_conv_in}, we have $\alpha_0(\tau)<0$ for $\tau\ll 0$. Now, we consider the function
\begin{equation}
\beta_{0}(\tau):=\sup_{\sigma\leq \tau}|\alpha_{0}|(\sigma).
\end{equation}
For $\tau\ll 0$, equation \eqref{ev_eq_alpha_0} implies
\begin{equation}\label{eq}
     \left|\frac{d}{d\tau}\alpha_{0}+c\alpha^2_{0}\right|\leq \frac{c}{10}\beta^2_{0}.
\end{equation}
Then, we fix $\mathcal{T}\ll 0$ and consider the set $I=\{\tau\leq \mathcal{T}: -\alpha_{0}(\tau)=\beta_{0}(\tau)\}$. Note that $I\not=\emptyset$, since $\alpha(\tau)\rightarrow 0$ as $\tau\rightarrow -\infty$. Also, clearly $I$ is a closed set. For each $\tau_{0}\in I$, by \eqref{eq} we have $\frac{d}{d\tau}\alpha_{0}(\tau_{0})<0$. Together with $\alpha_{0}<0$, this implies that there is some $\delta>0$, such that $(\tau_{0}-\delta] \subset I$. Hence, $I$ is also open, and thus $I=(-\infty, \mathcal{T}]$. Therefore, for $\tau\ll 0$ we get
\begin{equation}
    \frac{d}{d\tau}\alpha_{0}=    -c\alpha^2_{0}+o(\alpha_0^2).
\end{equation}
Solving this ODE yields the claim.
\end{proof}

\noindent By Claim \ref{claim_alpha_zero}, we see that
 \begin{align}
        \hat{v}(\rho, \tau)&=\alpha_{0}\psi_{0}+o(|\tau|^{-1})
        =-\frac{c_0(\rho^2-2k)}{c|\tau|}+o(|\tau|^{-1})
        \end{align}
 holds in $\mathcal{H}$-norm. To explicitly compute the prefactor, we consider the eigenfunctions
 \begin{equation}
{\psi}_{+1}= c_{+1} 1,\qquad
{\psi}_{-1}=c_{-1}\left(\rho^4-(8+4k)\rho^2+(8+4k)k\right) .
 \end{equation}
 A straightforward computation shows that
$ \mathcal{L}{\psi}_{\pm 1}= \pm {\psi}_{\pm 1}$, 
 and
 \begin{equation}
\left( \frac{\psi_0}{c_0}\right)^2=  \frac{\psi_{-1}}{c_{-1}} + 8\frac{\psi_0}{c_0}+8k\frac{\psi_{+1}}{c_{+1}}\, .
 \end{equation}
 Together with the fact that eigenfunctions corresponding to different eigenvalues are orthogonal this yields 
\begin{equation} \label{cubed_8}
\langle \psi_0^2,\psi_0\rangle_{\mathcal{H}}=8c_0.
\end{equation}
Hence,
  \begin{align}
        \hat{v}(\rho, \tau)
        &=-\frac{\sqrt{2(n-k)}(\rho^2-2k)}{4|\tau|}+o(|\tau|^{-1})\nonumber
    \end{align}
 in $\mathcal{H}$-norm,
  which proves \eqref{inw_quad1}. Finally, together with standard parabolic estimates this yields \eqref{inw_quad2}. This finishes the proof of the proposition.
 \end{proof}
 
 \bigskip
 
\subsection{Intermediate region}
To capture the intermediate region, we consider the function
\begin{equation}
\bar{u}(\sigma, \tau)={u}(|\tau|^{\frac{1}{2}}\sigma, \tau).
\end{equation}
The goal of this subsection is to prove the following proposition:
\begin{proposition}[intermediate region]\label{prop_inter}
For any compact subset $K\subset [0,\sqrt{2})$, we have
\begin{equation}\label{inter}
    \lim_{\tau\rightarrow -\infty}\sup_{\sigma\in K}|\bar{u}(\sigma, \tau)-\sqrt{(n-k)(2-\sigma^2)}|=0.
\end{equation}
\end{proposition}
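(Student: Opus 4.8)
The plan is to establish matching upper and lower bounds for $\bar u$, locally uniformly on $[0,\sqrt 2)$, and then combine them (for a compact $K\subset[0,\sqrt2)$ take $\sigma_0=\max K<\sqrt2$). Throughout we use that the renormalized profile solves $u_\tau=\tfrac{u_{\rho\rho}}{1+u_\rho^2}+\tfrac{k-1}{\rho}u_\rho-\tfrac{\rho}{2}u_\rho+\tfrac{u}{2}-\tfrac{n-k}{u}$, that each $\bar M_\tau$ is convex (so $u_{\rho\rho}\le 0$ and, by symmetry, $u_\rho\le 0$ for $\rho>0$), and the convergence \eqref{conv_to_cyl}. In the variable $\sigma=|\tau|^{-1/2}\rho$ one has $\rho u_\rho=\sigma\bar u_\sigma$ and $\tfrac{k-1}{\rho}u_\rho=O(|\tau|^{-1})\bar u_\sigma$, and since the two convexity terms carry a favourable sign, $\bar u$ is a subsolution of the effective equation $f_\tau=-\tfrac{\sigma}{2}f_\sigma+\tfrac f2-\tfrac{n-k}{f}$ up to an $o(1)$ drift as $\tau\to-\infty$; the function $\bar u_\infty(\sigma)=\sqrt{(n-k)(2-\sigma^2)}$ is the unique solution of the corresponding stationary ODE matching the second–order parabolic asymptotics of Proposition \ref{prop_parabolic} as $\sigma\to 0$ (concretely $\bar u_\infty^2=2(n-k)-(n-k)\sigma^2$).

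\textbf{Upper bound.} As announced in the introduction, $\limsup_{\tau\to-\infty}\bar u(\sigma,\tau)\le\bar u_\infty(\sigma)$ (locally uniformly) follows by adapting \cite[Section 6]{ADS1}: one perturbs $\bar u_\infty$ to a strict supersolution of the effective equation modulo controllable errors, and runs the maximum principle with interior data supplied by Proposition \ref{prop_parabolic}. The new term $\tfrac{k-1}{\rho}u_\rho\le 0$ only helps, and the further lower–order terms generated by $k\ge 2$ are absorbed exactly as in \cite[Section 6]{ADS1}.

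\textbf{Lower bound.} This is the heart of the matter, and it uses the cylindrical foliation of Section \ref{sec_prelim}. Fix $\sigma_0<\sqrt 2$; it suffices to show $\liminf_{\tau\to-\infty}\bar u(\sigma,\tau)\ge\bar u_\infty(\sigma)$ uniformly on $[0,\sigma_0]$. The input from \cite{ADS1} is the asymptotic profile of the $(n+1-k)$-dimensional shrinkers $\Sigma_a$: on the scale of their own length, $u_a(y_1)=\sqrt{2(n-k)\,(1-y_1^2/a^2)}+o(1)$ as $a\to\infty$, uniformly for $y_1/a$ in compact subsets of $[0,1)$, with a cap of size $O(1/a)$ near $y_1=a$ (cf.\ the proof of Claim \ref{claim_barrier} and \cite[eq.~(195)]{CHH}). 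Fix a small $\varepsilon>0$ with $\sigma_0<\sqrt{2(1-\varepsilon)}$. Given $\tau_0\ll 0$, we use the inner barrier $\Gamma_a^\eta$ from Definition \ref{new_fol_def} with $a=\sqrt{2|\tau_0|(1-\varepsilon)}$ and $\eta=\eta(\tau_0)\to 0$ chosen so slowly that $L_1=L_0+2(k-1)\eta^{-1}\le\delta^{-1}(\tau_0)$ (for $\tau_0\ll 0$ this automatically forces $2\eta\le\varepsilon L_1$ as well). We claim $\bar M_\tau$ encloses $\Gamma_a^\eta$ on the annulus $\{L_1\le|y'|\le a+\eta\}$ for all $\tau\le\tau_0$. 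First, on that annulus $\Gamma_a^\eta$ lies strictly inside the solid cylinder by the fixed margin $\sqrt{2(n-k)}-u_a(L_1-\eta)>0$, so by \eqref{conv_to_cyl} (on the fixed compact set $|y'|\le a+\eta$) there is $\tau_\ast(\tau_0)$ with $\bar M_\tau\supset\Gamma_a^\eta$ on the annulus for all $\tau\le\tau_\ast$. Then we run the inner barrier principle (Corollary \ref{cor_inner_barrier}, in its evident localized form) on $\{L_1\le|y'|\le a+\eta\}$ for $\tau\in[\tau_\ast,\tau_0]$: contact can only occur on the lateral boundary; at $|y'|=a+\eta$ it cannot, since there $u_a(a)=0\le u$; and at $|y'|=L_1$ the refined parabolic expansion \eqref{inw_quad2} gives, for all $\tau\le\tau_0$,
\[
u(L_1,\tau)^2\ \ge\ 2(n-k)\Bigl(1-\tfrac{L_1^2}{2|\tau|}\Bigr)+o(|\tau|^{-1})\ \ge\ 2(n-k)\Bigl(1-\tfrac{(L_1-\eta)^2}{a^2}\Bigr)+o(|\tau|^{-1})\ =\ u_a(L_1-\eta)^2+o(|\tau|^{-1}),
\]
where the middle inequality is exactly the choice $a^2=2|\tau_0|(1-\varepsilon)$ together with $2\eta\le\varepsilon L_1$, and it holds with a margin $\gg|\tau|^{-1}$ that dominates the errors. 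This proves the claim, hence $u(\rho,\tau_0)\ge u_a(\rho-\eta)$ for $L_1\le\rho\le a+\eta$. Evaluating at $\rho=\sigma|\tau_0|^{1/2}$ with $\sigma\le\sigma_0$ and letting $\tau_0\to-\infty$ gives $\liminf_{\tau_0\to-\infty}\bar u(\sigma,\tau_0)\ge\sqrt{(n-k)\bigl(2-\tfrac{\sigma^2}{1-\varepsilon}\bigr)}$ uniformly on $[0,\sigma_0]$; letting $\varepsilon\downarrow 0$ yields $\liminf_{\tau\to-\infty}\bar u(\sigma,\tau)\ge\bar u_\infty(\sigma)$. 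Together with the upper bound this proves Proposition \ref{prop_inter}.

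\textbf{Main obstacle.} The delicate point is the lower bound, specifically the calibration of $a$ and $\eta$: $\eta(\tau_0)$ must tend to $0$ slowly enough to stay inside the domain $\{\rho\le\delta^{-1}(\tau_0)\}$ of validity of the refined parabolic expansion and to satisfy $2\eta\le\varepsilon L_1$, while $a^2=2|\tau_0|(1-\varepsilon)$ must make the shrinker $\Gamma_a^\eta$ \emph{strictly} thinner than $\bar M_\tau$ at the inner edge $|y'|=L_1$. What makes this possible is precisely the quadratic refinement in Proposition \ref{prop_parabolic}: $\bar M_\tau$ leaves the cylinder like $-\sqrt{2(n-k)}\,\rho^2/(4|\tau|)$, which the $\varepsilon$-shrunk ellipsoidal shrinker beats by the definite factor $(1-\varepsilon)^{-1}$ on the whole range $L_1\le\rho\le a+\eta$; the $O(1/a)$ cap near $|y'|=a+\eta$ is irrelevant since there the barrier has height $0$. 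A secondary point is that Corollary \ref{cor_inner_barrier} must be applied in localized form on an annulus, so one needs the contact analysis on both lateral faces — the outer one trivial, the inner one being the estimate displayed above.
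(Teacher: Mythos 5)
Your proposal is correct in outline and is essentially the paper's own approach: the lower bound via the shifted, rotated ADS shrinker barriers $\Gamma_a^\eta$ with $a^2\approx 2(1-\varepsilon)|\tau_0|$ and $\eta\to 0$ slowly, calibrated at the inner edge $\rho=L_1$ against the refined parabolic expansion \eqref{inw_quad2} and the inner barrier principle of Corollary \ref{cor_inner_barrier}, and the upper bound by adapting \cite[Section 6]{ADS1} using the favourable sign of the convexity terms (which the paper carries out as a characteristics argument for $u^2-2(n-k)$ rather than via supersolutions). The only omissions are calibration details that the paper builds into its choice $L(\tau)=\min\{\delta(\tau)^{-1},M(|\tau|^{1/2}),|\tau|^{1/2-1/100}\}$: one must keep $L_1-\eta$ inside the validity range $r\le M(a)$ of the one-sided bound $u_a(r)\le\sqrt{2(n-k)}\bigl(1-\tfrac{r^2-3}{2a^2}\bigr)$ from \cite[Lemma 4.4]{ADS1} (your two-sided $o(|\tau|^{-1})$ expansion of $u_a$ at the inner edge is stronger than what is available, though only the one-sided bound is needed and your margin $\sim \varepsilon^2 L_1^2/|\tau_0|$ indeed dominates its $O(|\tau_0|^{-1})$ error), and the range $\rho< L_1$, i.e.\ $\sigma$ small, must be covered directly by \eqref{inw_quad2} or convexity rather than by the barrier --- both points are absorbed by adding the corresponding constraints to your ``choose $\eta$ slowly'' scheme.
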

\begin{proof}We will adapt the proof from \cite[Section 6]{ADS1} to our setting.\\

\noindent\underline{Lower bound:}
By \cite[Lemma 4.4]{ADS1} there exists an increasing positive function $M(a)$ with $\lim_{a\to \infty}M(a)= \infty$, such that
\begin{equation}
    u_{a}(r)\leq \sqrt{2(n-k)}\left(1-\frac{r^2-3}{2a^2}\right)
\end{equation}
for $0\leq r\leq M(a)$. We fix $\mathcal{T}$ negative enough, and for $\tau\leq\mathcal{T}$ let 
\begin{equation}
    L(\tau)=\min\{\delta(\tau)^{-1}, M(|\tau|^{\frac{1}{2}}), |\tau|^{\frac{1}{2}-\frac{1}{100}}\},
\end{equation}
where $\delta(\tau)$ is the function from Proposition \ref{prop_parabolic}. Now, we set
\begin{equation}
\hat{\eta}(\tau)=3(k-1)L(\tau)^{-1} \quad \textrm{and} \quad \hat{a}(\tau)=\sqrt{\frac{2|\tau|}{1+L(\tau)^{-1}}}\, .
\end{equation}
Then, for $\hat{\tau}\leq\mathcal{T}$, we have
\begin{align}\label{b2}
    u_{\hat{a}(\hat{\tau})}(L(\hat{\tau})+\hat{\eta}(\hat{\tau}))-\sqrt{2(n-k)}   &\leq  -\frac{\sqrt{n-k}[(L(\hat{\tau})+\hat{\eta}(\tau))^2-3]}{\sqrt{2}\hat{a}^2(\tau)}\\
    &\leq -\frac{\sqrt{n-k}L^2(\hat{\tau})}{2\sqrt{2}|\hat{\tau}|}.\nonumber
\end{align}
On the other hand, by Proposition \ref{prop_parabolic} (parabolic region) we have
\begin{equation}\label{b1}
        u(L(\hat{\tau}), \tau)-\sqrt{2(n-k)}\geq -\frac{\sqrt{n-k}L^2(\hat{\tau})}{2\sqrt{2}|\hat{\tau}|}
    \end{equation}
for $\tau\leq \hat{\tau}$. Hence, by Corollary \ref{cor_inner_barrier} (barrier principle), we infer that
\begin{equation}
   u(\rho, \tau)\geq    u_{\hat{a}(\hat{\tau})}(L(\hat{\tau})+\hat{\eta}(\hat{\tau}))
\end{equation}
holds for all $\rho\geq L(\hat{\tau})$ and $\tau\leq \hat{\tau}$. Therefore,  for any $\varepsilon\in (0, \sqrt{2})$, we can find a $\mathcal{T}_{1}\leq\mathcal{T}$, such that
\begin{equation}
    \bar{u}(\sigma, \tau)\geq u_{\hat{a}({\tau})}(|{\tau}|^{\frac{1}{2}}\sigma)
\end{equation}
holds for $\sigma\in [|\tau|^{-\frac{1}{100}}, \sqrt{2}-\varepsilon]$ and $\tau\leq \mathcal{T}_{1}$. Thus, by \cite[Lemma 4.3]{ADS1}, we can find some $\delta_{1}(\tau)>0$ with $\lim_{\tau\rightarrow -\infty}\delta_1(\tau)=0$, such that 
\begin{equation}
       \bar{u}(\sigma, \tau)+\delta_{1}(\tau)\geq\sqrt{n-k}\sqrt{2-(1+L(\tau)^{-1})\sigma^2} 
\end{equation}
holds for all $\sigma \in [|\tau|^{-\frac{1}{100}}, \sqrt{2}-\varepsilon]$ and $\tau\leq \mathcal{T}_{1}$. 
Finally, using the convexity of the flow, we conclude that
\begin{equation}
    \liminf_{\tau\rightarrow-\infty} \inf_{\sigma\leq \sqrt{2}-\varepsilon} \left(\bar{u}(\sigma,\tau)-\sqrt{(n-k)(2-\sigma^2)}\right)\geq 0.
\end{equation}
\bigskip

\noindent\underline{Upper bound:}
By the evolution equation \eqref{profile_intro} and the chain rule, the renormalized profile function $u$ satisfies
\begin{equation}
 u_\tau = \frac{u_{\rho\rho}}{1+u_\rho^2}+\frac{k-1}{\rho}u_\rho - \frac{\rho}{2}u_\rho -\frac{n-k}{u}+\frac{u}{2}.
\end{equation}
Since the flow is convex, we have $u_\rho\leq 0$ and $u_{\rho\rho}\leq 0$, hence
\begin{equation}
    u_\tau\leq-\frac{n-k}{u}+\frac{1}{2}(u-\rho u_\rho) .
\end{equation}
Thus, $w:=u^2-2(n-k)$ satisfies
\begin{equation}
   w_\tau\leq w-\tfrac12 \rho w_\rho. 
\end{equation}
Hence, for every $\rho_{0}>0$ we have
\begin{equation}
    \frac{d}{d\tau}(e^{-\tau}w(\rho_{0} e^{\frac{\tau}{2}}, \tau))\leq 0
\end{equation}
for $\tau\ll 0$. Integrating this inequality, we obtain for every $\lambda\in (0, 1]$ that
\begin{equation}\label{ODEv}
    w(y, \tau)\leq \lambda^{-2}w(\lambda y, \tau+2\log\lambda).
\end{equation}
On the other hand, by Proposition \ref{prop_parabolic} (parabolic region), given any $A<\infty$, the inequality
 \begin{equation}\label{limitv}
        w(\rho, \tau)\leq |\tau|^{-1}(n-k)({2k-\rho^2})+o(|\tau|^{-1})
    \end{equation}
    holds for $\rho\leq A$.
Thus, for $\rho\geq A$ we obtain
\begin{equation}
    w(\rho,\tau)\leq -\frac{(1-2A^{-2})(n-k)\rho^2}{|\tau|+2 \log(\rho/A)}+o(|\tau-2 \log(\rho/A)|^{-1}).
\end{equation}
Hence,
\begin{equation}
    \bar{u}(\sigma, \tau)\leq \sqrt{n-k}\sqrt{2-(1-2A^{-2})\sigma^2+o(1)}
\end{equation}
holds uniformly on $\sigma\geq A|\tau|^{-1/2}$. In addition, using the concavity of $\bar{u}$, we obtain 
\begin{equation}
    \bar{u}(\sigma,\tau)\leq \sqrt{n-k}\sqrt{2-(1-8A^{-2})\sigma^2+o(1)}
\end{equation}
for $\sigma\leq A|\tau|^{-1/2}$. By the arbitrariness of $A$, we conclude that
\begin{equation}
    \limsup_{\tau\rightarrow-\infty} \sup_{\sigma\leq \sqrt{2}-\varepsilon} \left(\bar{u}(\sigma,\tau)-\sqrt{(n-k)(2-\sigma^2)}\right)\leq 0.
\end{equation}
This finishes the proof of the proposition.
\end{proof}

\subsection{Tip region}
Set $\lambda(s)=\sqrt{|s|^{-1}\log |s|}$, and let $p_{s}\in M_s$ be any point that maximizes $|x'|$. We consider the rescaled flows 
    \begin{equation}
        {\widetilde{M}}^{s}_{t}=\lambda(s)\cdot(M_{s+\lambda^{-2}(s)t}-p_{s}).
    \end{equation}
Let $s_i\to \infty$ be any sequence for which $\lim_{i\to\infty}\frac{p_{s_i}}{|p_{s_i}|}$ exists. The goal of this section is to prove:

\begin{proposition}[tip region]\label{prop_tip}
  As $i\to\infty$ the flows ${\widetilde{M}}^{s_i}_{t}$ converge to $\mathbb{R}^{k-1}\times N_{t}$, where $N_{t}$ is the $\mathrm{SO}(n+1-k)$-symmetric  bowl soliton in $\mathbb{R}^{n+2-k}$ with speed $\sqrt{1/2}$.
\end{proposition}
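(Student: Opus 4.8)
The plan is to extract a limit of the rescaled flows $\widetilde M^{s_i}_t$ using the standard compactness theory for ancient noncollapsed flows, identify the limit as a flow that splits off $k-1$ lines, and then invoke the Brendle--Choi classification to pin it down as the round bowl soliton. First I would record the basic a priori facts: by \cite[Theorem 1.14]{HaslhoferKleiner_meanconvex} each $M_s$ is smooth, convex and $\alpha$-noncollapsed with a uniform $\alpha$, and the rescalings $\widetilde M^{s_i}_t$ are again $\alpha$-noncollapsed since noncollapsedness is scale invariant. The key point is that the rescaling factor $\lambda(s)=\sqrt{|s|^{-1}\log|s|}$ is tuned to the width: by the sharp asymptotics in the intermediate region (Proposition \ref{prop_inter}), together with the relation $\rho=\sqrt{\log|t|}\,\sigma$, we have $d(t)=\max_{M_t}|x'|=\sqrt{2|t|\log|t|}(1+o(1))$, so $\lambda(s)\,d(s)\to\sqrt 2$. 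Hence after translating the tip $p_s$ to the origin the rescaled hypersurfaces stay at bounded distance from the origin in the $x'$-directions near the point of interest, and by convexity plus the curvature bound coming from the $\alpha$-noncollapsing and the cylindrical estimate \eqref{intro_cyl_est_intro} the flows $\widetilde M^{s_i}_t$ have locally uniformly bounded curvature. Applying the compactness theorem for ancient noncollapsed flows (e.g. \cite{HaslhoferKleiner_meanconvex}), after passing to a subsequence we obtain a limit $\widetilde M^\infty_t$ which is an ancient, convex, $\alpha$-noncollapsed mean curvature flow in $\mathbb R^{n+1}$, nonflat since $p_s$ is a point of nonvanishing curvature (indeed, the tip of an oval has $H$ bounded below after this rescaling).

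Next I would show that $\widetilde M^\infty_t$ splits off $k-1$ lines isometrically. This is where the choice of $p_s$ as a maximizer of $|x'|$ matters: writing $p_s=(p'_s,p''_s)$ with $|p''_s|=U(|p'_s|,s)\to 0$ after rescaling (the tip sits on the $x'$-sphere of radius $d(s)$), the $\mathrm{SO}(k)$-symmetry of $M_s$ means that near $p_s$ the hypersurface looks, in the limit, like the product of $\mathbb R^{k-1}$ (the tangent directions to the orbit sphere $\{|x'|=d(s)\}\cap\{x''=0\}$, which has radius $d(s)\to\infty$ after rescaling by $\lambda(s)$, hence flattens) with a profile in the remaining $\mathbb R^{n+2-k}$. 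More precisely, I would argue that for any $R<\infty$ the portion of $\widetilde M^{s_i}_t$ in $B_R$ is, up to error $o(1)$, invariant under translations in the $k-1$ directions spanned by $\{v\perp p'_{s_i}/|p'_{s_i}|\}\subset\mathbb R^k\times\{0\}$; this uses that the rotational orbits through points near $p_{s_i}$ have radius $\sim d(s_i)$ and that $\lambda(s_i)d(s_i)\to\infty$ — wait, actually $\lambda(s_i)d(s_i)\to\sqrt2$, so this needs more care. The cleaner route is: the limit $\widetilde M^\infty_t$ is $\mathrm{SO}(k-1)$-symmetric about the axis through $0$ in the direction $\lim p'_{s_i}/|p'_{s_i}|$ but with the center of symmetry at distance $\sqrt2/\lambda\to\infty$... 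Let me instead argue directly from \cite[Lemma 3.14]{HaslhoferKleiner_meanconvex} as the paper suggests in the introduction: by the cylindrical estimate \eqref{intro_cyl_est_intro}, away from the tips the flow is nearly a shrinking cylinder $\mathbb R^k\times S^{n-k}$ after rescaling, which already splits $k$ lines; combined with the structure at the tip and a point-selection/dimension-reduction argument, the tangent flow at $-\infty$ being $\mathbb R^k\times S^{n-k}$ forces the blow-up limit at the tip to split at least $k-1$ lines. Concretely, I would apply \cite[Lemma 3.14]{HaslhoferKleiner_meanconvex} which states (roughly) that an ancient noncollapsed flow all of whose blow-downs split $k$ lines must itself split $k-1$ lines near any point; here the blow-down of $\widetilde M^\infty_t$ is controlled by the blow-down of $M_t$, which is the cylinder \eqref{tangent_bubble_intro} splitting $k$ lines.

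Once $\widetilde M^\infty_t=\mathbb R^{k-1}\times N_t$ with $N_t$ an ancient, convex, noncollapsed, $\mathrm{SO}(n+1-k)$-symmetric mean curvature flow in $\mathbb R^{n+2-k}$ that is noncompact and nonflat, I would invoke the classification of Brendle--Choi \cite{BC1,BC2}: the only such flows are the round shrinking sphere-cross-line (excluded, being compact in the sphere factor would make the tip a neck, contradicting that we are at the tip of width $\sqrt{2|t|\log|t|}$ with one $\mathbb R$-factor only... actually one must check the non-compactness and strict convexity) and the rotationally symmetric bowl soliton. To rule out the cylinder and identify the soliton, I would use that $p_{s_i}$ is a point of \emph{maximal} $|x'|$, so in the limit the $x'$-coordinate achieves an interior maximum along the flow — equivalently $N_t$ is strictly convex and, since it is the blow-up at the ``tip'' where the width function $d(t)$ is attained, it must be the translating bowl soliton rather than the shrinking cylinder (a shrinking cylinder would give $d(t)\sim\sqrt{|t|}$, contradicting $d(t)\sim\sqrt{2|t|\log|t|}$, via a rescaling argument comparing scales). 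Finally, the normalization of the bowl's speed: the spatial rescaling factor $\lambda(s)=\sqrt{|s|^{-1}\log|s|}$ together with the parabolic time-rescaling forces the translating speed of $N_t$ to be exactly $1/\sqrt2$ — this follows by matching the asymptotic expansion of the collar of the bowl soliton (where $u\to$ the cylindrical value $\sqrt{2(n-k)}$) with the intermediate-region asymptotics $\bar u(\sigma,\tau)\to\sqrt{(n-k)(2-\sigma^2)}$ from Proposition \ref{prop_inter} evaluated near $\sigma=0$, i.e.\ by a matched-asymptotics computation fixing the one free parameter (the speed) in the Brendle--Choi bowl.

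\textbf{Main obstacle.} The delicate point is the line-splitting of the limit and the exclusion of the shrinking-cylinder alternative in Brendle--Choi's trichotomy: one must rule out that the tip blow-up is itself a lower-dimensional cylinder, and this requires carefully using that $\lambda(s)$ was chosen at the \emph{width} scale $\sqrt{|s|^{-1}\log|s|}$ rather than the neck scale $|s|^{-1/2}$, so that the limit genuinely ``sees'' the tip of the oval and is noncompact with strictly positive curvature at the origin. Controlling the curvature bounds uniformly down to $t\to-\infty$ for the rescaled flows, so that the compactness theorem applies and yields a nonflat ancient limit, is the technical heart; here the a priori cylindrical estimate \eqref{intro_cyl_est_intro} and the noncollapsing are exactly what make it go through, as flagged in the introduction via \cite[Lemma 3.14]{HaslhoferKleiner_meanconvex}.
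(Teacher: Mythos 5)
Your skeleton (compactness via the global convergence theorem, splitting off $k-1$ lines, then Brendle--Choi) is the same as the paper's, but two of the three key steps have genuine gaps. First, the splitting step: your computation $\lambda(s)\,d(s)\to\sqrt2$ is wrong, and this miscalculation derails your argument. With $\lambda(s)=\sqrt{|s|^{-1}\log|s|}$ and $d(s)=\sqrt{2|s|\log|s|}\,(1+o(1))$ one gets $\lambda(s)\,d(s)=\sqrt2\,\log|s|\,(1+o(1))\to\infty$; the tip scale $\lambda(s)^{-1}$ is much smaller than the width $d(s)$, not comparable to it. This divergence is exactly what makes the line-splitting work: the $\mathrm{SO}(k)$-orbit $(k-1)$-sphere through $p_{s}$ has rescaled radius $\lambda(s)d(s)\to\infty$, so (using convexity and symmetry) the subsequential limit contains $k-1$ lines and hence splits as $\mathbb{R}^{k-1}\times N_t$ --- which is precisely how the paper argues. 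Having talked yourself out of this, you fall back on \cite[Lemma 3.14]{HaslhoferKleiner_meanconvex}, but that lemma does not produce a splitting; as used in this paper it identifies a limit that \emph{already} splits off $k$ lines as the round shrinking $\mathbb{R}^k\times S^{n-k}$, and it enters the proof of the cylindrical estimate (Proposition \ref{eclosecylinder}), not the tip analysis. Applied at the tip it would give the wrong conclusion (a cylinder, not a bowl), so this substitute step fails.

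Second, the speed normalization $\sqrt{1/2}$ is left as an unexecuted matched-asymptotics sketch. The paper's proof is different and concrete: Hamilton's Harnack inequality gives $\frac{d}{dt}H(p_t)\geq 0$, and combining this monotonicity with the ODE $\frac{d}{d\tau}\bar d(\tau)=\tfrac12\bar d(\tau)-\bar H(\tau)$ and the intermediate-region fact $\bar d(\tau)=\sqrt{2|\tau|}(1+o(1))$ forces $H(p_t)/\sqrt{2|t|^{-1}\log|t|}\to \tfrac12$, i.e. the rescaled mean curvature at the base point converges to $\sqrt{1/2}$. This simultaneously shows the limit is nonflat, pins the translating speed, and (together with the fact that the time-zero slice lies in a halfspace, since $p_s$ maximizes $|x'|$) rules out the cylinder alternative before invoking Brendle--Choi. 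Without some version of this curvature identity your argument neither fixes the speed nor cleanly excludes degenerate limits, so as written the proposal does not close.
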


\begin{proof}
Using Proposition \ref{prop_inter} (intermediate region), and recalling also that $\sigma=\rho/|\tau|^{1/2}$, we see that $\bar{d}(\tau)=\max_{M_\tau}|y'|$ satisfies
\begin{equation}
  \bar{d}({\tau})=\sqrt{2|\tau|}(1+o(1)).
\end{equation}
Moreover, by Hamilton's Harnack inequality \cite{Hamilton_Harnack}, we have
\begin{equation}
    \frac{d}{dt}H(p_t)\geq 0.
\end{equation}
Together with the ODE
\begin{equation}
\frac{d}{d\tau} \bar{d}(\tau)=\frac{1}{2}\bar{d}(\bar{\tau}) - \bar{H}(\tau),
\end{equation}
where $\bar{H}(\tau)=H(p_t)/\sqrt{|t|}$ and $\tau=-\log|t|$, it follows that
\begin{equation}\label{tip curvature}
  \frac{H(p_t)}{\sqrt{2|t|^{-1}\log|t|}}= \frac{\bar{H}(\tau)}{\sqrt{2|\tau|}}=\frac{1}{2}+o(1).
\end{equation}
By the global convergence theorem \cite[Theorem 1.12]{HaslhoferKleiner_meanconvex} the sequence of flows $\widetilde{M}^{s_i}_t$ converges subsequentially to a limit $M^\infty_t$. By $SO(k)$ symmetry, $M^\infty_t$ contains $k-1$ lines. Hence, it splits isometrically as $M_t^\infty=\mathbb{R}^{k-1}\times N_t$. Moreover, by construction $N_t$ is a noncompact ancient noncollapsed $\mathrm{SO}(n+1-k)$-symmetric flow in $\mathbb{R}^{n+2-k}$, whose time zero slice is contained in a halfspace with mean curvature $\sqrt{1/2}$ at the base point.
Together with the classification by Brendle-Choi \cite{BC1,BC2}, this implies the assertion.
\end{proof}

\bigskip

Theorem \ref{thm_shap_asympt} now follows from Proposition \ref{prop_parabolic}, Proposition \ref{prop_inter} and Proposition \ref{prop_tip}.

\bigskip

\section{Uniqueness}\label{sec_uniqueness}

The goal of this section is to upgrade the sharp asymptotics to uniqueness. We recall that the renormalized profile function $u(\rho,\tau)$ is defined for $\rho\leq \bar{d}(\tau)$ and satisfies the evolution equation
\begin{equation}\label{ev_eq_u}
 u_\tau = \frac{u_{\rho\rho}}{1+u_\rho^2}+\frac{k-1}{\rho}u_\rho - \frac{\rho}{2}u_\rho -\frac{n-k}{u}+\frac{u}{2}.
\end{equation}
As before we also work with the inverse profile function $Y(\cdot,\tau)$ defined as the inverse function of $u(\cdot,\tau)$. Differentiating the identity $Y(u(\rho,\tau),\tau)=\rho$ we see that $Y_u u_\rho=1$, $Y_uu_\tau+Y_\tau=0$, and $u_{\rho\rho}=-Y_u^{-3} Y_{uu}$, hence
\begin{equation}\label{reversed profiles}
    Y_{\tau}=\frac{Y_{uu}}{1+Y^2_{u}}+\frac{n-k}{u}Y_{u}+\frac{1}{2}(Y-uY_{u})-\frac{k-1}{Y}.
\end{equation}
Finally, we recall that the zoomed in profile function $Z$ is defined by
\begin{equation}
Z(s,\tau)=|\tau|^{1/2}\left( Y(|\tau|^{-1/2}s,\tau)-Y(0,\tau)\right) .
\end{equation}

\subsection{A priori estimates}\label{sec_apriori}
The goal of this subsection is to establish certain a priori estimates that will be used frequently in the following. Specifically, the two main estimates of this subsection are the quadratic concavity estimate in Proposition \ref{u2yyleq0} and the cylindrical estimate in Proposition \ref{eclosecylinder}.

\begin{proposition}[{quadratic concavity, c.f. \cite[Prop 5.2]{ADS2}}]\label{u2yyleq0}
There is some $\tau_{0}\ll0$, such that for $\tau\leq \tau_{0}$ we have 
\begin{equation}
(u^2)_{\rho\rho}\leq 0.
\end{equation}
\end{proposition}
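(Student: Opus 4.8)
The plan is to run a maximum principle argument on the quantity $Q := (u^2)_{\rho\rho} = 2u u_{\rho\rho} + 2u_\rho^2$, mimicking \cite[Prop 5.2]{ADS2} but tracking the extra terms produced by the $\tfrac{k-1}{\rho}u_\rho$ term in the evolution equation \eqref{ev_eq_u}. First I would fix a large $\tau_1$ and note, using the sharp asymptotics (Proposition \ref{prop_parabolic} and Proposition \ref{prop_inter}), that in the cylindrical and intermediate regions $u$ is $C^2$-close to the concave functions $\sqrt{2(n-k)}$ and $\sqrt{(n-k)(2-\sigma^2)}$, so that $Q<0$ there with room to spare; and in the tip region, by Proposition \ref{prop_tip}, the rescaled flow converges to $\mathbb{R}^{k-1}\times(\text{bowl})$, and since $(u^2)_{\rho\rho}$ is scaling-invariant in the relevant sense, the strict quadratic concavity of the bowl soliton's profile gives $Q<0$ there as well for $\tau\ll0$. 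This handles all regions for a \emph{fixed} time slice, but to upgrade to an estimate holding for all $\tau\le\tau_0$ one wants a maximum-principle mechanism that prevents $Q$ from ever touching $0$.

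The core computation is to derive the evolution equation for $Q$ from \eqref{ev_eq_u}. Writing $a := (1+u_\rho^2)^{-1}$, a direct but lengthy computation gives a parabolic equation of the schematic form
\begin{equation*}
Q_\tau = a Q_{\rho\rho} + (\text{first order in } Q_\rho) + c(\rho,\tau) Q + \mathcal{R},
\end{equation*}
where $\mathcal{R}$ collects the terms that do not vanish when $Q\equiv 0$. The crucial point, exactly as in \cite{ADS2}, is that the terms coming from differentiating $-\tfrac{n-k}{u}+\tfrac{u}{2}$ twice and combining with $u^2$ produce contributions with a favorable sign, so that $\mathcal{R}\le 0$ whenever $Q=0$ (one uses here $u>0$, $u_\rho\le 0$, and the cylindrical/intermediate asymptotics to control lower-order pieces). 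The new feature for $k\ge2$ is the term arising from $\tfrac{k-1}{\rho}u_\rho$; differentiating $(u^2)$ against this gives, after simplification, a term proportional to $\tfrac{k-1}{\rho^2}(\,\cdot\,)$ which, as the paper indicates, also turns out to have the good sign — I would verify that the worst term is $-\tfrac{2(k-1)}{\rho^2}u_\rho^2\le 0$ (or can be absorbed), and that near $\rho=0$ the symmetry forces $u_\rho=O(\rho)$ so no singularity appears.

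With the good sign of $\mathcal{R}$ in hand, the argument concludes by a standard barrier/maximum principle: suppose $Q$ first reaches $0$ at some interior point $(\rho_\ast,\tau_\ast)$ with $\tau_\ast\le\tau_0$; by the asymptotic analysis above, $\rho_\ast$ cannot be in the cylindrical, intermediate, or tip regions for $\tau_0$ sufficiently negative, and $\rho_\ast=0$ is excluded by symmetry (where $Q(0,\tau)=2u u_{\rho\rho}<0$ from the cylindrical expansion), while $\rho_\ast=\bar d(\tau_\ast)$ is handled by the tip analysis; at an interior maximum of $Q$ we have $Q_\rho=0$, $Q_{\rho\rho}\le0$, $Q_\tau\ge0$, so the evolution equation forces $0\le \mathcal{R}\le0$, and then the strong maximum principle (applied on a suitable bounded region, after a cutoff and a small exponential correction $Qe^{-\epsilon\tau}$ to strictify the inequality) yields a contradiction. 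I expect the \textbf{main obstacle} to be the bookkeeping of $\mathcal{R}$: one must check that \emph{every} term not proportional to $Q$, $Q_\rho$, or $Q_{\rho\rho}$ — including all the new $k$-dependent ones and the cross terms between $a_\rho$, $u_\rho$, $u_{\rho\rho}$ — is nonpositive on the set $\{Q=0\}$, and that the lower-order error terms are genuinely negligible in the parabolic norm uniformly down to $\tau=-\infty$; the sign of the $\tfrac{k-1}{\rho}$-contribution is the one genuinely new point to nail down.
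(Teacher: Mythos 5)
Your overall idea (maximum principle on $Q=(u^2)_{\rho\rho}$, with the new $\tfrac{k-1}{\rho}u_\rho$ contribution having a favorable sign) is in the same spirit as the paper, and your sign heuristic for the $k$-dependent term is essentially right: in the paper's computation (done for the unrescaled profile, using $(u^2)_{\rho\rho}=(U^2)_{rr}$) the extra term at a critical point is $\tfrac{2(k-1)}{r^3}(Q_r-rQ_{rr})$, which is nonpositive at a positive maximum since $Q_r\le 0$ and $Q_{rr}>0$. However, your scheme has two genuine gaps. First, the claim that the asymptotics give $Q<0$ ``in all regions'' at a fixed time slice is not correct: in the parabolic region the model is the cylinder, for which $(u^2)_{\rho\rho}=0$, so $C^2$-closeness gives no sign at all (one would need the sharp $\tfrac{\rho^2-2k}{4|\tau|}$ correction upgraded to second-derivative accuracy); the intermediate-region convergence in Proposition \ref{prop_inter} is only locally uniform in $C^0$ on compact subsets of $\sigma<\sqrt2$; and, most importantly, the collar region $\{L/\sqrt{|\tau|}\le u\le 2\theta\}$ is covered by none of the cited convergence statements, so you cannot exclude that your ``first touching'' happens there. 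Second, and structurally, for an ancient solution a first-touching argument needs an anchor: you would have to know $Q<0$ over the \emph{entire} profile at some (arbitrarily negative) time, which is essentially the statement to be proved; there is no initial time to start from, and the strong maximum principle applied at a touching point where the reaction terms merely satisfy $\mathcal R\le 0$ does not by itself give a contradiction.

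The paper resolves exactly this difficulty differently: Lemma \ref{u2yyl0} shows that a \emph{positive} interior maximum of $(U^2)_{rr}$ over $\{u\ge L/\sqrt{|\tau|}\}$ is strictly decreasing in forward time, so if $(u^2)_{\rho\rho}>0$ somewhere along a sequence $\tau_i\to-\infty$, the maximal value stays bounded below by some $c>0$ backward in time, while the maximizing points satisfy $u\sqrt{|\tau_i|}\to\infty$ (the soliton region is strictly quadratically concave by the bowl asymptotics and \cite[Lemma 4.4]{ADS2}). Then concavity gives $(u^2)_{\rho\rho}<2u_\rho^2$, hence $u_\rho^2\ge c/2$ at those points, contradicting the smallness of $|u_\rho|$ wherever $u\sqrt{|\tau|}$ is large (tip asymptotics plus convexity, as in the proof of Proposition \ref{eclosecylinder}). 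This backward-in-time monotonicity combined with the slope contradiction is the key mechanism missing from your proposal; without it, or without a genuine pointwise sign in the collar region, the argument does not close.
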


To prove this proposition, we will adapt the argument from \cite[Section 5]{ADS2} to our setting. We start with the following lemma:

\begin{lemma}\label{u2yyl0}
Given any $L<\infty$, if $\max_{\big\{u\geq L/\sqrt{|\tau|}\big\}}(u^2)_{\rho\rho}>0$ for some $\tau$, then we have $(u^2)_{\rho\rho\tau}<0$ at any interior maximum.
\end{lemma}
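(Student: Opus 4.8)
The plan is to compute the evolution equation for $P:=(u^2)_{\rho\rho}$ under the renormalized flow and then apply the maximum principle at an interior maximum. First I would write $Q:=u^2$, so that $Q$ satisfies a quasilinear parabolic equation derived from \eqref{ev_eq_u}: explicitly, since $u_\tau$ is given by \eqref{ev_eq_u}, we get $Q_\tau = 2uu_\tau + 2u_\rho^2$, which after substitution becomes an equation of the form $Q_\tau = a\, Q_{\rho\rho} + (\text{lower order in }Q_\rho) + (\text{zeroth order})$ with $a=1/(1+u_\rho^2)$. Then I would differentiate this equation twice in $\rho$ to obtain the evolution of $P=Q_{\rho\rho}$. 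At an interior maximum of $P$ in the region $\{u\geq L/\sqrt{|\tau|}\}$ we have $P_\rho=0$ and $P_{\rho\rho}\leq 0$, so the spatial-derivative terms either vanish or have a favorable sign, and the task reduces to showing that the remaining algebraic terms — those involving $P$ itself multiplied by some coefficient, plus genuinely lower-order terms that can be bounded using $P>0$ together with $u_\rho\leq 0$, $u_{\rho\rho}\leq 0$ (convexity) and the cylindrical-type control $|u_\rho|$ small, $u|u_{\rho\rho}|$ small away from the tip — combine to give $P_\tau<0$ strictly.

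The key new feature compared to \cite[Section 5]{ADS2} is the presence of the $\frac{k-1}{\rho}u_\rho$ drift term for $k\geq 2$. I would carry out the following steps in order. Step 1: derive the PDE for $Q=u^2$ and then for $P=Q_{\rho\rho}$, being careful to track every term generated by differentiating $\frac{k-1}{\rho}u_\rho$ twice (these produce terms with factors $\rho^{-1}, \rho^{-2}, \rho^{-3}$ times derivatives of $u$). Step 2: at an interior maximum use $P_\rho=0$, $P_{\rho\rho}\leq 0$ to discard the good terms, and isolate the ``bad'' algebraic expression $\mathcal{B}$; Step 3: show that in $\{u\geq L/\sqrt{|\tau|}\}$, using $P>0$ (the contradiction hypothesis), convexity, and the a priori smallness of $u_\rho$ and $u u_{\rho\rho}$ there, every term in $\mathcal{B}$ that is not manifestly negative is dominated by a manifestly negative term; Step 4: conclude $P_\tau<0$ at the interior maximum, which is the assertion. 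The hope — stated in the paper's outline — is that the genuinely new $k\geq 2$ terms coming from the $\rho$-drift turn out to have the \emph{good} sign, so that they only help.

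The main obstacle I expect is Step 3: controlling the new curvature-like quantities produced by the radial drift term and checking their signs, since in the region $\{u\geq L/\sqrt{|\tau|}\}$ the variable $\rho$ ranges over a large interval (up to near $\bar d(\tau)\sim\sqrt{2|\tau|}$), so the $\rho^{-1}$-type factors are not uniformly small and one must use the precise structure — e.g. that $\langle e_1,\nu\rangle\geq 0$ by convexity and the sharp asymptotics pinning $u$ close to $\sqrt{2(n-k)}$ on the bulk — rather than a crude bound. A secondary subtlety is making sure the interior maximum genuinely lies in the open region so that $P_\rho = 0$ and $P_{\rho\rho}\le 0$ are available (the case where the maximum is attained at $u = L/\sqrt{|\tau|}$ is handled separately in the proof of Proposition \ref{u2yyleq0} itself, not in this lemma). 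Once the algebra is organized, the maximum principle argument is routine; the delicate part is purely the sign bookkeeping of the extra terms.
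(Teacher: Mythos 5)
Your plan follows the same general strategy as the paper (maximum principle applied to $P=(u^2)_{\rho\rho}$ after computing its evolution), but it stops exactly where the lemma's content begins: the explicit computation and sign verification are never carried out, only announced as a ``hope''. In the paper this is the whole proof. Working with the unrescaled profile $U(r,t)$ (legitimate since $(u^2)_{\rho\rho}=(U^2)_{rr}$, which kills the renormalization drift terms), one finds that at a critical point of $Q_{rr}$, with $Q=U^2$,
\begin{equation*}
Q_{rrt}=\frac{Q_{rrrr}}{1+U_r^2}-\frac{1}{2Q}\,(2+Q_{rr})(Q_{rr}-2U_r^2)\,\frac{(1-3U_r^2)Q_{rr}-8U_r^2}{(1+U_r^2)^3}+\frac{2(k-1)}{r^3}\left(Q_r-rQ_{rr}\right),
\end{equation*}
and at a positive interior maximum every term is checked to be nonpositive, with the middle one strictly negative: $Q_{rrrr}\leq 0$; $2+Q_{rr}>0$, $Q_{rr}-2U_r^2=2UU_{rr}<0$ and $(1-3U_r^2)Q_{rr}-8U_r^2<0$ (a short case distinction $3U_r^2<1$ versus $3U_r^2\geq 1$); and the new $k\geq 2$ term is $\frac{2(k-1)}{r^3}(Q_r-rQ_{rr})\leq -\frac{2(k-1)}{r^2}Q_{rr}<0$ since $Q_r=2UU_r\leq 0$ and $Q_{rr}>0$. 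None of this is in your proposal, and it is precisely the nontrivial factorization and sign bookkeeping that the lemma asserts.

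Beyond incompleteness, your Step 3 is pointed in the wrong direction and would likely fail to deliver the conclusion. The lemma claims the \emph{strict} inequality $(u^2)_{\rho\rho\tau}<0$ at any positive interior maximum, including maxima where $P$ is arbitrarily small; an argument that dominates ``bad'' terms by $\varepsilon$ times ``good'' terms (using the cylindrical estimate or the sharp asymptotics to make $|u_\rho|$ and $u|u_{\rho\rho}|$ small) only yields bounds of the form $P_\tau\leq \varepsilon\cdot(\ldots)$ unless the good terms are quantitatively bounded away from zero, which they need not be. The paper avoids this entirely: the restriction $u\geq L/\sqrt{|\tau|}$ plays no role in the proof of this lemma (it is only used later, in Proposition \ref{u2yyleq0}, to guarantee via the bowl-soliton asymptotics that the maximum is interior), and no smallness of $\rho^{-1}$, $u_\rho$ or $uu_{\rho\rho}$ is needed — only convexity ($U_r\leq 0$, $UU_{rr}<0$) and the positivity of $Q_{rr}$ at the maximum. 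So your worry about the $\rho^{-1}$ factors being non-small dissolves once the computation is done: the radial-drift terms are manifestly of the good sign, exactly, not merely after estimation.
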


\begin{proof}
For the following maximum principle argument, it is convenient to work with the unrescaled profile function $U(r,t)$ instead of the renormalized profile function $u(\rho,\tau)$. By the chain rule we have $(u^2)_{\rho\rho}=(U^2)_{rr}$. Hence, it suffices to show that $(U^2)_{rrt}<0$ at any positive maximum of $(U^2)_{rr}$.
Set $Q:=U^2$. Using the evolution equation \eqref{profile_intro}, we infer that
\begin{align}
      Q_{t}=\frac{4QQ_{rr}-2Q^2_{r}}{4Q+Q^2_{r}}+\frac{(k-1)}{r}Q_{r}-2(n-k).
\end{align}
Differentiating this equation with respect to $r$ yields
\begin{equation}
    Q_{rt}=\frac{4QQ_{rrr}}{4Q+Q_{r}}+\frac{4Q_{r}(2+Q_{rr})(Q^2_{r}-2QQ_{r})}{(4Q_{r}+Q^2_{r})^2}-\frac{k-1}{r^2}(Q_r-rQ_{rr}).
\end{equation}
Differentiating this equation again, we see that at any critical point of $Q_{rr}$, we have
\begin{align}\label{Qxxt}
    Q_{rrt}=&\frac{Q_{rrrr}}{1+U^2_{r}}-\frac{1}{2Q}(2+Q_{rr})(Q_{rr}-2U^2_{r})\frac{(1-3U^2_{r})Q_{rr}-8U^2_{r}}{(1+U^2_{r})^3}\nonumber\\
&+\frac{2(k-1)}{r^3}(Q_{r}-rQ_{rr}).
\end{align}
Now, at any positive interior maximum of $Q_{rr}$ we have $Q_{rrrr}\leq 0$ and $Q_{rr}>0$. Moreover, by convexity of our hypersurface we have the inequalities $Q_{rr}-2U_r^2=2UU_{rr}<0$ and $Q_r=2UU_r\leq 0$. Furthermore, if $3U_r^2<1$, then $(1-3U^2_{r})Q_{rr}-8U^2_{r}<Q_{rr}-8U_r^2 < -6U_r^2<0$. Finally, if $3U_r^2\geq 1$, then the inequality $(1-3U^2_{r})Q_{rr}-8U^2_{r}<0$ also holds. Combining these facts, we conclude that $Q_{rrt}<0$ at any positive interior maximum of $Q_{rr}$. This proves the lemma.
\end{proof}

Using the lemma, we can now prove Proposition \ref{u2yyleq0}:

\begin{proof}[{Proof of Proposition  \ref{u2yyleq0}}]
By Theorem \ref{thm_shap_asympt} (sharp asymptotics) the zoomed in profile function $Z(s,\tau)$ converges  for $\tau\to -\infty$ to the profile function $\bar{Z}(s)$ of the $(n+1-k)$-dimensional bowl soliton with speed $1/\sqrt{2}$. Hence, applying \cite[Lemma 4.4]{ADS2}, we get that for  for sufficiently negative $\tau$ in the soliton region ${\big\{u\leq L/\sqrt{|\tau|}\big\}}$ we have $(u^2)_{\rho\rho}<0$.\\
Now, suppose towards a contradiction there is a sequence $ \tau_i\to-\infty$ such that $\max (u^2)_{\rho\rho}(\cdot,\tau_i)>0$. By the above, choosing $\rho_i$ with $(u^2)_{\rho\rho}(\rho_i,\tau_i)=\max (u^2)_{\rho\rho}(\cdot,\tau_i)$ we have $u(\rho_i,\tau_i)\sqrt{|\tau_i|} \to \infty$. Applying Lemma \ref{u2yyl0}, we see that the sequence $(u^2)_{\rho\rho}(\rho_i,\tau_i)$ is monotone increasing. In particular, we have $(u^2)_{\rho\rho}(\rho_i,\tau_i)\geq c$ for some $c>0$.
Together with $(u^2)_{\rho\rho}=2uu_{\rho\rho}+2u_\rho^2<2u_\rho^2$, which holds by concavity, we infer that $u_{\rho}^2(\rho_i,\tau_i)\geq c/2$. This is in contradiction with $u(\rho_i,\tau_i)\sqrt{|\tau_i|} \to \infty$ and the fact that the soliton region converges to a bowl soliton, and thus proves the proposition.
\end{proof}

In particular, we see that $Y\sim C\exp\big({-\tfrac{u^2}{4(n-k)}}\big)$ in the collar region:

\begin{corollary}[{almost Gaussian collar, c.f. \cite[Lemma 5.7]{ADS2}}]\label{cor_gaussian_collar}
Given $\eta>0$, if $\theta>0$ is small enough and $L<\infty$ is large enough, then for $\tau\ll 0$ we have
\begin{equation}
    \left|1+\frac{uY}{2(n-k)Y_{u}}\right|\leq \eta
\end{equation}
in the collar region $\{ L/\sqrt{|\tau|}\leq u \leq 2\theta\}$.
\end{corollary}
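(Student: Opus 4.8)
The plan is to derive the estimate directly from the quadratic concavity estimate in Proposition \ref{u2yyleq0} together with the sharp asymptotics, mimicking the ODE-comparison idea behind \cite[Lemma 5.7]{ADS2}. First I would reformulate the desired inequality in terms of the inverse profile function. Writing $G(u,\tau) := \log Y(u,\tau)$ (for fixed $\tau\ll 0$, on the range of $u$ where $Y>0$), the quantity $1 + \frac{uY}{2(n-k)Y_u}$ equals $1 + \frac{u}{2(n-k)G_u}$, so the claim is equivalent to showing that $G_u$ is close to $-\frac{u}{2(n-k)}$, i.e. that $\big(G(u,\tau) + \tfrac{u^2}{4(n-k)}\big)_u$ is small throughout the collar region.

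The key input is that quadratic concavity $(u^2)_{\rho\rho}\le 0$ translates, via the identities $Y_uu_\rho=1$ and $u_{\rho\rho}=-Y_u^{-3}Y_{uu}$ recorded after \eqref{reversed profiles}, into a one-sided differential inequality for $Y$ as a function of $u$. Concretely, $(u^2)_{\rho\rho} = 2uu_{\rho\rho} + 2u_\rho^2 = -2uY_u^{-3}Y_{uu} + 2Y_u^{-2}\le 0$ forces $uY_{uu} \ge Y_u$ (using $Y_u<0$), equivalently $\big(\log(-Y_u)\big)_u \le \frac{1}{u}$, i.e. $\partial_u\!\left( \log(-Y_u) - \log u\right) \le 0$. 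Hence $-Y_u/u$ is nonincreasing in $u$ on the collar. This monotonicity, anchored at the cylindrical end where the parabolic-region asymptotics from Proposition \ref{prop_parabolic} give precise control — there $u\approx\sqrt{2(n-k)}$ and, from $v \approx -\frac{\sqrt{2(n-k)}(\rho^2-2k)}{4|\tau|}$, one computes $Y_u$ at $u=2\theta$ (or rather at $u$ slightly below $\sqrt{2(n-k)}$, matching with the transition region) — pins down $-Y_u/u$ from above at the top of the collar. For the matching lower bound on $-Y_u/u$, I would anchor at the bottom of the collar $u = L/\sqrt{|\tau|}$, where the tip-region convergence to the round bowl soliton (Proposition \ref{prop_tip}, or \cite[Lemma 4.4]{ADS2}) gives $-Y_u/u \to \frac{1}{2(n-k)}$ as $L\to\infty$; combined with the monotonicity this squeezes $-Y_u/u$ between $\frac{1-\eta'}{2(n-k)}$ and $\frac{1+\eta'}{2(n-k)}$ throughout $\{L/\sqrt{|\tau|}\le u\le 2\theta\}$ once $\theta$ is small, $L$ is large and $\tau$ is sufficiently negative. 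Rearranging $-Y_u/u \approx \frac{1}{2(n-k)}$ gives exactly $\big|1 + \frac{uY}{2(n-k)Y_u}\big| = \big|1 - \frac{u}{2(n-k)(-Y_u)}\big|\le\eta$, which is the assertion.

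I expect the main obstacle to be making the two anchoring computations genuinely rigorous and mutually compatible: quadratic concavity only provides the monotonicity of $-Y_u/u$, so the whole estimate rests on getting sharp two-sided control of $-Y_u/u$ at \emph{both} ends of the collar and checking that these are consistent (both $\approx \frac{1}{2(n-k)}$) with errors that can be driven to zero. At the cylindrical end this requires differentiating the parabolic-region expansion \eqref{inw_quad2} in $\rho$ with uniform control — which is available since Proposition \ref{prop_parabolic} was stated with $C^{2n}$ estimates — and transferring it to a statement about $Y_u$ near $u = 2\theta$; at the soliton end it requires the locally smooth convergence $Z\to\bar Z$ from Theorem \ref{thm_shap_asympt}, together with an explicit computation of $\bar Z_s$ showing $-\bar Z_u/u \to \frac{1}{2(n-k)}$ for the round bowl, i.e. that the bowl soliton itself satisfies the Gaussian collar profile to leading order. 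A secondary technical point is that one must choose $\theta$ small first (to ensure the collar sits inside the region where Proposition \ref{u2yyleq0} applies and where the cylindrical expansion is valid), then $L$ large, then $\tau$ sufficiently negative — the order of quantifiers matters and should be spelled out as in the statement.
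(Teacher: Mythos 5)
Your overall skeleton is the same as the paper's: Proposition \ref{u2yyleq0} gives monotonicity of a slope quantity along the collar, which is then squeezed between an anchor at the cylindrical end coming from the sharp asymptotics and an anchor at the soliton end coming from the bowl; the paper phrases this with $\rho\mapsto(u^2)_\rho$, you with $u\mapsto -Y_u/u$, and these are equivalent since $-(u^2)_\rho=2u/|Y_u|$. However, as written your argument has two genuine gaps. First, the cylindrical-end anchor cannot come from Proposition \ref{prop_parabolic}: the level $u=2\theta$ sits at $\rho\approx\sqrt{2|\tau|}$, i.e. at $\sigma$ close to $\sqrt 2$, far outside the parabolic region $\rho\leq\delta(\tau)^{-1}$ where \eqref{inw_quad2} is valid. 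Anchoring instead at a point genuinely inside the parabolic region does not help: there $|Y_u|/u\sim |\tau|/\rho$, which is an order of magnitude larger than the collar value $\sim\sqrt{|\tau|}$, so (with the correct monotonicity — note $(u^2)_{\rho\rho}\le 0$ gives $Y_{uu}/Y_u\geq 1/u$, hence $|Y_u|/u$ is \emph{nondecreasing} in $u$, the opposite of what you state) it only bounds the collar from above by $\sim|\tau|$, which is useless. The upper anchor at $u=2\theta$ has to come from the intermediate-region asymptotics (Proposition \ref{prop_inter}), exactly as in the paper, where one gets $-(u^2)_\rho|_{u=2\theta}=\tfrac{2\sqrt2(n-k)}{\sqrt{|\tau|}}\sqrt{1-\tfrac{2\theta^2}{n-k}+o(1)}$.

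Second, you drop the factor $Y$ from the quantity being estimated. The corollary concerns $\tfrac{uY}{2(n-k)Y_u}$, not $\tfrac{u}{2(n-k)Y_u}$, so the correct behavior in the collar is $|Y_u|/u\approx \tfrac{Y}{2(n-k)}\approx\tfrac{\sqrt{2|\tau|}}{2(n-k)}$ (consistent with Lemma \ref{Yuestimates}, where $|Y_u|/u\sim\sqrt{|\tau|}$), not $|Y_u|/u\to\tfrac{1}{2(n-k)}$; likewise the bowl of speed $1/\sqrt2$ satisfies $-\bar Z_s/s\to\tfrac{\sqrt 2}{2(n-k)}$, and undoing the $|\tau|^{1/2}$ rescaling via $Y_u(u,\tau)=Z_s(|\tau|^{1/2}u,\tau)$ gives the tip-end anchor $|Y_u|/u\geq\tfrac{\sqrt{2|\tau|}}{2(n-k)}(1-CL^{-1})$. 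Because of the dropped $Y$, your concluding step $\bigl|1-\tfrac{u}{2(n-k)(-Y_u)}\bigr|\le\eta$ is not the assertion of the corollary (indeed $\tfrac{u}{|Y_u|}\to 0$ there, so that expression tends to $1$). To close the argument you must combine the squeezed bound on $|Y_u|/u$ with the uniform control $Y\approx\sqrt{2|\tau|}$ on $\{u\leq 2\theta\}$ (relative error $O(\theta^2)+o(1)$), which again comes from the intermediate region; this is precisely how the paper finishes, by showing $1-\eta\leq -\tfrac{\rho(u^2)_\rho}{4(n-k)}\leq 1+\eta$. With the anchor moved to the intermediate region, the monotonicity direction corrected, and the $Y$-factor restored, your plan becomes the paper's proof.
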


\begin{proof}
It is enough to show that
\begin{equation}
1-\eta \leq -\frac{\rho(u^2)_\rho}{4(n-k)} \leq 1+\eta.
\end{equation}
To this end, using the description of the intermediate region from Theorem \ref{thm_shap_asympt} (sharp asymptotics), we see that in the region $\{u\leq 2\theta\}$ we have
\begin{equation}\label{collar.y}
\sqrt{2|\tau|}(1-4\theta^2) \leq \rho \leq \sqrt{2|\tau|}(1+o(1)),
\end{equation}
and moreover it holds that
\begin{equation}
-(u^2)_{\rho}|_{u=2\theta} =  \frac{2\sqrt{2}(n-k)}{\sqrt{|\tau|}}\sqrt{1-\frac{2\theta^2}{n-k}+o(1)}\, .
\end{equation}
Furthermore, using the description of the tip region from Theorem \ref{thm_shap_asympt} (sharp asymptotics) we see that
\begin{equation}
-(u^2)_\rho|_{u=L/\sqrt{|\tau|}}\leq \frac{2\sqrt{2}(n-k)}{\sqrt{|\tau|}}(1+CL^{-1}).
\end{equation}
Recall the monotonicity of $\rho\mapsto (u^2)_{\rho}(\rho,\tau)$ for $\tau\ll 0$, by Proposition \ref{u2yyleq0} (quadratic concavity) and the above inequalities, the assertion holds.
\end{proof}

We conclude this subsection with the following cylindrical estimate:

\begin{proposition}[cylindrical estimate]\label{eclosecylinder}
For any $\eps>0$, there exist $L<\infty$ and $\tau_\ast\ll 0$ such that
\begin{equation}
|u_\rho| + u|u_{\rho\rho}| \leq \eps
\end{equation}
at all points where $u(\rho,\tau)\geq L/\sqrt{|\tau|}$ and $\tau\leq \tau_\ast$.
\end{proposition}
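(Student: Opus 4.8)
The plan is to prove the cylindrical estimate by a compactness/contradiction argument, exploiting the fact that any suitable blow-down limit away from the tips splits off $k$ lines and is therefore a round cylinder. Suppose the estimate fails. Then there is $\eps_0 > 0$, a sequence $L_i \to \infty$, a sequence $\tau_i \to -\infty$, and points $\rho_i$ with $u(\rho_i, \tau_i) \geq L_i/\sqrt{|\tau_i|}$ at which $|u_\rho| + u|u_{\rho\rho}| > \eps_0$. Geometrically, let $q_i \in \bar{M}_{\tau_i}$ be the point of the renormalized flow with $|y'| = \rho_i$ and $|y''| = u(\rho_i,\tau_i)$. Translating so that $q_i$ sits at the origin of the last $n+1-k$ coordinates (i.e. recentering in the $x''$-plane is unnecessary since we only shift along $y'$) and unrescaling back to the original flow, we consider the rescaled flows centered at the corresponding spacetime points, with rescaling factor chosen to normalize the curvature (equivalently, by the asymptotics $\rho_i \approx \sqrt{2|\tau_i|}$, the relevant point lies in the region where $u$ is comparable to a fixed constant after rescaling, so the mean curvature is bounded above and below).

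First I would set up the blow-up sequence carefully. Since $u(\rho_i, \tau_i) \geq L_i/\sqrt{|\tau_i|} \to 0$ slowly and by Theorem \ref{thm_shap_asympt} the renormalized profile satisfies $u \approx \sqrt{(n-k)(2-\sigma^2)}$ in the intermediate region with $\sigma = \rho/|\tau|^{1/2}$, the point $q_i$ has distance from the tip bounded below. Rescaling the \emph{original} flow $M_t$ around the spacetime point corresponding to $q_i$ by a factor making $H$ equal to $1$ there (this factor stays within a bounded multiple of $1/u(\rho_i,\tau_i) \cdot \sqrt{\ldots}$ by Harnack and noncollapsing), and applying the global convergence theorem \cite[Theorem 1.12]{HaslhoferKleiner_meanconvex}, we obtain a subsequential limit flow $M^\infty_t$ which is ancient, noncollapsed, and weakly convex. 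Because each $q_i$ carries the full $\mathrm{SO}(k)$-symmetry of a sphere of radius $\rho_i \to \infty$ in the $y'$-variables (equivalently radius $\to \infty$ in the rescaled picture), the limit $M^\infty_t$ contains $k$ straight lines, hence splits isometrically as $M^\infty_t = \mathbb{R}^k \times N_t$ where $N_t$ is an ancient noncollapsed flow in $\mathbb{R}^{n+1-k}$.

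Next, I would identify $N_t$. The key point is that $N_t$ is compact: the width of $\bar M_{\tau_i}$ in the $y''$-directions near $q_i$ is $u(\rho_i,\tau_i) \to 0$ before rescaling but, after rescaling by the reciprocal of essentially this width, stays bounded, while the fact that we are strictly away from the tip region ($u(\rho_i,\tau_i)\sqrt{|\tau_i|} = L_i \to \infty$, so we are not in the soliton region) prevents $N_t$ from being a bowl soliton or any other noncompact piece — one shows the rescaled slices are uniformly bounded using the convexity and the almost-Gaussian collar estimate (Corollary \ref{cor_gaussian_collar}) to control the profile between $q_i$ and the tip. A compact ancient noncollapsed flow in $\mathbb{R}^{n+1-k}$ that arises as such a limit, and which moreover inherits $\mathrm{SO}(n+1-k)$-symmetry and is weakly convex, must by \cite[Lemma 3.14]{HaslhoferKleiner_meanconvex} be a round shrinking sphere $S^{n-k}$. (This is exactly where the cited lemma — a splitting/rigidity statement forcing the limit to be a round sphere — does the work, replacing the intricate Huisken–Sinestrari argument used for $k=1$.) Therefore $M^\infty_t$ is a round shrinking cylinder $\mathbb{R}^k \times S^{n-k}$.

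Finally, I would derive the contradiction. On the round cylinder the profile function is constant, so $u_\rho \equiv 0$ and $u_{\rho\rho} \equiv 0$; hence $|u_\rho| + u|u_{\rho\rho}| \to 0$ along the blow-up sequence at the marked points, contradicting the standing assumption $|u_\rho| + u|u_{\rho\rho}| > \eps_0$ at $\rho_i$. (Here one uses smooth convergence of the rescaled flows, guaranteed by the global convergence theorem, together with the uniform curvature normalization at the marked point to pass the bound on $u_\rho, u_{\rho\rho}$ — which are scale-invariant in the combination stated — to the limit.) The main obstacle I anticipate is the \emph{compactness of $N_t$}: one must rule out that the blow-up limit is a noncompact object such as $\mathbb{R}^{k-1}\times(\text{lower bowl})$ or $\mathbb{R}^{k-1}\times(\text{oval})$, and this requires using the separation $u(\rho_i,\tau_i)\sqrt{|\tau_i|}\to\infty$ from the soliton region together with the quadratic concavity and almost-Gaussian collar estimates to show that, at the chosen rescaling, the slices through $q_i$ close up at bounded distance; the precise bookkeeping of which rescaling factor to use (curvature at $q_i$ versus inverse width) and verifying it stays away from both the tip scale and the fully-cylindrical scale is the delicate part.
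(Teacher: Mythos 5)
Your overall strategy --- argue by contradiction, blow up at bad points, invoke the global convergence theorem, split off lines via the $\mathrm{SO}(k)$-symmetry, and conclude by \cite[Lemma 3.14]{HaslhoferKleiner_meanconvex} --- is the same as the paper's, but two steps have genuine gaps. First, the splitting: the $\mathrm{SO}(k)$-orbit through a point at distance $\rho_i$ from the axis is a $(k-1)$-sphere, so an orbit radius tending to infinity only produces $k-1$ lines in the limit; the $k$-th flat direction, along the profile axis, does not come from symmetry. The paper obtains it by first proving, from the tip asymptotics of Theorem \ref{thm_shap_asympt} together with convexity, the preliminary gradient bound $|u_\rho|\leq\eps_1$ on $\{u\geq L_1/\sqrt{|\tau|}\}$ (which at the same time disposes of the $|u_\rho|$-part of the proposition with no blow-up at all), and then combines this with $u(\rho_i,\tau_i)\sqrt{|\tau_i|}\to\infty$ and the noncollapsing bound $H(p_i,t_i)\geq (\sqrt{|t_i|}\,u(\rho_i,\tau_i))^{-1}$ to get the full $k$-line splitting. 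By folding $|u_\rho|$ into the contradiction quantity you forgo exactly this input; also your assertion $\rho_i\approx\sqrt{2|\tau_i|}$ is unjustified, since the bad points may sit anywhere in $\{u\geq L_i/\sqrt{|\tau_i|}\}$ (that $\rho_i\to\infty$ follows, but only from the smooth convergence of $\bar M_\tau$ to the cylinder on compact sets).

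Second, and more seriously, the identification of the cross-section $N_t$ is wrong as stated: a compact, ancient, noncollapsed, convex, $\mathrm{SO}(n+1-k)$-symmetric flow in $\mathbb{R}^{n+1-k}$ need not be a round shrinking sphere --- for $n+1-k\geq 3$ the ancient ovals themselves are precisely such flows --- and \cite[Lemma 3.14]{HaslhoferKleiner_meanconvex} is not a statement of that form. So even granting compactness of $N_t$ (which you single out as the delicate point), the limit $\mathbb{R}^k\times(\text{lower-dimensional ancient oval})$ is not excluded by your argument, and it would not yield the desired contradiction since such an oval has points with $u|u_{\rho\rho}|$ of definite size. The paper's proof never needs compactness of the cross-section; instead it verifies the hypotheses of \cite[Lemma 3.14]{HaslhoferKleiner_meanconvex} through Lemma \ref{unif_conv_lemma} (uniform $k$-convexity, $\lambda_1+\ldots+\lambda_{k+1}\geq\beta H$), which is proved by a separate blow-up combined with an entropy comparison against the tangent flow \eqref{tangent_bubble_intro} at $-\infty$. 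This uniform convexity passes to the blow-up limit and rules out simultaneously the noncompact cross-sections (bowls, being asymptotically cylindrical) and the non-round compact ones (ovals, which contain neck-like regions where $\lambda_1/H$ is arbitrarily small). Your proposal never brings in the entropy of the original flow or any quantitative convexity, and without such an ingredient the final rigidity step does not go through.
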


\begin{proof}
By the sharp asymptotics in the tip region from Theorem \ref{thm_shap_asympt}, and convexity, for every $\eps_1>0$, there exist $L_1<\infty$ and $\tau_1\ll 0$ such that
\begin{equation}\label{eq_small_slope}
|u_\rho| \leq \eps_1
\end{equation}
at all points where $u(\rho,\tau)\geq L_1/\sqrt{\tau}$ and $\tau\leq \tau_1$.\\
Now suppose towards a contraction there are times $\tau_i\to -\infty$ and radii $\rho_i$ such that
\begin{equation}\label{utaulimit}
u(\rho_i,\tau_i)\sqrt{\tau_i}\to \infty,
\end{equation}
but
\begin{equation}\label{eq_cont_ass}
u|u_{\rho\rho}|\geq \eps/2.
\end{equation}
Set $t_i=-e^{-\tau_i}$ and $p_i=\sqrt{|t_i|}(\rho_i,0,\ldots,0,u(\rho_i,\tau_i), 0,\ldots,0)\in M_{t_i}$.
By the noncollapsing property, we have
\begin{equation}\label{eq_lowermean}
H(p_i,t_i)\geq \frac{1}{\sqrt{|t_i|}u(\rho_i,\tau_i)}.
\end{equation}
Let $M^i_t$ be the sequence of flows that is obtained from $M_t$ by shifting $(p_i,t_i)$ to the origin, and parabolically rescaling by $H(p_i,t_i)^{-1}$. By the global convergence theorem \cite[Theorem 1.12]{HaslhoferKleiner_meanconvex}, we can pass to a subsequential limit $M^\infty_t$. It follows from \eqref{eq_small_slope}, \eqref{utaulimit} and \eqref{eq_lowermean}, together with the symmetry, that $M_t^\infty$ splits off $k$ lines. Hence, applying \cite[Lemma 3.14]{HaslhoferKleiner_meanconvex}, whose assumptions are satisfied thanks to Lemma \ref{unif_conv_lemma} below, we see that $M_t^\infty$ must be a round shrinking $\mathbb{R}^k\times S^{n-k}$. This contradicts \eqref{eq_cont_ass}, and thus proves the proposition.
\end{proof}

In the above proof we used the following lemma:

\begin{lemma}[{uniform $k$-convexity}]\label{unif_conv_lemma}
Any compact ancient noncollapsed flow, whose tangent flow at $-\infty$ is given by \eqref{tangent_bubble_intro}, is uniformly $k$-convex.
\end{lemma}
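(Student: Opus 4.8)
\textbf{Proof proposal for Lemma \ref{unif_conv_lemma}.}

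The plan is to argue by contradiction using a point-picking argument and the global convergence theorem, exactly in the spirit of how one proves cylindrical/convexity estimates for mean-convex flows. Suppose $M_t=\partial K_t$ is a compact ancient noncollapsed flow whose tangent flow at $-\infty$ is $\mathbb{R}^k\times S^{n-k}(\sqrt{2(n-k)|t|})$, but which is \emph{not} uniformly $k$-convex. Then there exist points $p_i\in M_{t_i}$ with $(\lambda_1+\dots+\lambda_k)/H\to 0$ at $p_i$, where $\lambda_1\le\dots\le\lambda_n$ are the principal curvatures. First I would record that by \cite[Theorem 1.14]{HaslhoferKleiner_meanconvex} each $M_t$ is smooth and convex, so all $\lambda_j\ge 0$ and $H>0$; in particular the quantity $(\lambda_1+\dots+\lambda_k)/H$ is well-defined and in $[0,1]$.

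The main step is a dichotomy on the location of the bad points $p_i$. Rescale the flow around $(p_i,t_i)$ by $H(p_i,t_i)$ and pass to a subsequential limit $M^\infty_t$ using the global convergence theorem \cite[Theorem 1.12]{HaslhoferKleiner_meanconvex}; this limit is a nontrivial ancient noncollapsed flow with $H=1$ at the spacetime origin, and by Hamilton's strong maximum principle for the curvature operator (applied to the convex limit) the inequality $\lambda_1+\dots+\lambda_k\ge 0$ with equality somewhere forces $M^\infty_t$ to split off $k$ lines, i.e.\ $M^\infty_t=\mathbb{R}^k\times N_t$ for an ancient noncollapsed convex flow $N_t$ in $\mathbb{R}^{n+1-k}$. (If instead $\lambda_1+\dots+\lambda_k$ does not actually vanish in the limit, one rescales more aggressively or uses the fact that the infimum of $(\lambda_1+\dots+\lambda_k)/H$ is attained in the limit by the usual halfspace-type compactness; in any case the limit splits $k$ lines.) Now I would invoke the classification of ancient noncollapsed flows in low dimensions together with the constraint coming from the tangent flow at $-\infty$: since the original flow has a cylindrical tangent flow with exactly $k$ flat directions, an iteration of Huisken's monotonicity and the fact that $N_t$ lives in $\mathbb{R}^{n+1-k}$ forces $N_t$ to be either a round shrinking $S^{n-k}$ or a lower-dimensional bowl/oval; in either of these, $\lambda_1\ge cH>0$, so the full $M^\infty_t$ has $\lambda_{k+1}\ge cH$, i.e.\ it is $\mathbb{R}^k\times(\text{strictly convex})$. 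But then $M^\infty_t$ is in fact uniformly $k$-convex with the best constant, contradicting that it was obtained as a limit of points where $(\lambda_1+\dots+\lambda_k)/H\to 0$ — unless the limit split off \emph{more} than $k$ lines, which is excluded because the Gaussian density of $M^\infty_t$ is at most that of $\mathbb{R}^k\times S^{n-k}$, while $\mathbb{R}^{k+1}\times S^{n-k-1}$ (or any higher split) has strictly smaller density. This contradiction proves the lemma.

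Alternatively, and perhaps more cleanly, one can phrase the whole argument through the \emph{cylindrical estimate} machinery of Haslhofer--Kleiner: the hypothesis that the tangent flow at $-\infty$ is $\mathbb{R}^k\times S^{n-k}$ means that for $t\ll 0$ every point of $M_t$ of sufficiently large curvature lies in an $\eps$-cylindrical region modeled on $\mathbb{R}^k\times S^{n-k}$, by \cite[Theorem 1.12]{HaslhoferKleiner_meanconvex} and a contradiction-compactness argument; on such a region $\lambda_1+\dots+\lambda_k\ge(1-\eps)H$ trivially. For bounded curvature and $t$ in a compact time interval the statement is immediate from compactness and strict convexity, and ancientness plus convexity let one push the time interval to $-\infty$. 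The surviving issue is whether a bad point can "escape to the tip" where the cylinder picture degenerates — but near the tip the rescaled flow is a bowl soliton (Proposition \ref{prop_tip}), which is strictly convex, hence $\lambda_1>0$ there as well.

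The step I expect to be the main obstacle is making the splitting argument rigorous without circularity: one wants to conclude $M^\infty_t$ splits off \emph{exactly} $k$ lines (not fewer, not more), and the "exactly $k$" part is what rules out a genuinely only-$(k-1)$-convex limit. Controlling this requires the density/entropy bound $\mathrm{Ent}(M^\infty_t)\le\mathrm{Ent}(\mathbb{R}^k\times S^{n-k})$ — which follows from Huisken monotonicity and the hypothesis on the tangent flow at $-\infty$ — combined with the classification of low-entropy ancient noncollapsed flows; threading these together carefully is where the real work lies, though none of it requires new ideas beyond \cite{HaslhoferKleiner_meanconvex,BC1,BC2} and Hamilton's Harnack/strong maximum principle.
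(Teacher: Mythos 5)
There is a genuine gap, and it starts with which inequality you are trying to prove. The bound that is actually available (and the one the paper's proof establishes and later uses for Proposition \ref{eclosecylinder}) is a lower bound on $(\lambda_1+\cdots+\lambda_{k+1})/H$ — consistent with Theorem \ref{ovalfamily}, which calls these flows uniformly $(k+1)$-convex. A lower bound $\lambda_1+\cdots+\lambda_k\geq\beta H$, which is what your contradiction hypothesis $(\lambda_1+\cdots+\lambda_k)/H\to 0$ targets, is simply false for flows with tangent flow \eqref{tangent_bubble_intro}: in the bubble-sheet region the renormalized flow is close to $\mathbb{R}^k\times S^{n-k}(\sqrt{2(n-k)})$, where the $k$ smallest principal curvatures are $o(H)$, so the ratio tends to $0$ as $t\to-\infty$. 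This is also why your claimed contradiction is not one: if the blow-up limit is $\mathbb{R}^k\times N_t$ with $N_t$ strictly convex, then $(\lambda_1+\cdots+\lambda_k)/H\equiv 0$ on the limit, which is perfectly consistent with how the limit was produced — nothing is contradicted, and no refinement of the splitting argument can rescue this version. (Your "alternative" phrasing has the same defect: on an $\eps$-cylindrical region modeled on $\mathbb{R}^k\times S^{n-k}$ one has $\lambda_1+\cdots+\lambda_k\approx 0$, not $\geq(1-\eps)H$; also, $\lambda_1\geq cH$ fails on the bowl, which is only uniformly two-convex.)

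The paper's argument uses exactly the toolkit you reach for, but with the correct quantity: assuming $(\lambda_1+\cdots+\lambda_{k+1})/H\to 0$ along $p_i\in M_{t_i}$, rescaling by $H(p_i,t_i)^{-1}$ and passing to a limit via the global convergence theorem \cite[Theorem 1.12]{HaslhoferKleiner_meanconvex}, the strict tensor maximum principle forces the limit to split off $k+1$ lines, so by \cite[Theorem 1.14]{HaslhoferKleiner_meanconvex} its tangent flow at $-\infty$ is $\mathbb{R}^\ell\times S^{n-\ell}$ with $\ell\geq k+1$; this is excluded because, by Huisken monotonicity and \eqref{tangent_bubble_intro}, the limit's entropy is at most that of $\mathbb{R}^k\times S^{n-k}$. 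Note that your density comparison here is backwards: since the entropy of spheres is strictly decreasing in dimension, $\mathrm{Ent}(\mathbb{R}^{k+1}\times S^{n-k-1})=\mathrm{Ent}(S^{n-k-1})>\mathrm{Ent}(S^{n-k})=\mathrm{Ent}(\mathbb{R}^k\times S^{n-k})$; it is precisely this strict \emph{increase} for additional $\mathbb{R}$-factors that makes the entropy bound rule out $\ell\geq k+1$, whereas with "strictly smaller density," as you wrote, the bound would exclude nothing. With the inequality corrected to the $(k+1)$-sum and the entropy comparison turned around, your outline collapses to the paper's short argument; no classification of the cross-section $N_t$ is needed at all.
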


\begin{proof}
By the strict maximum principle, our flow $M_t$ is strictly convex. Suppose towards a contradiction there is some sequence $p_i\in M_{t_i}$ such that
\begin{equation}
\frac{\lambda_1+\ldots+\lambda_{k+1}}{H}(p_i,t_i)\to 0.
\end{equation}
Let $M^i_t$ be the sequence of flows that is obtained from $M_t$ by shifting $(p_i,t_i)$ to the origin, and parabolically rescaling by $H(p_i,t_i)^{-1}$. By the global convergence theorem \cite[Theorem 1.12]{HaslhoferKleiner_meanconvex}, we can pass to a subsequential limit $M^\infty_t$. By the strict tensor maximum principle, $M_t^\infty$ splits off $k+1$ lines. Hence, by \cite[Theorem 1.14]{HaslhoferKleiner_meanconvex}, the tangent flow of $M_t^\infty$ at $-\infty$ must be $\mathbb{R}^\ell\times S^{n-\ell}$ for some $\ell\geq k+1$. This contradicts the fact that by \eqref{tangent_bubble_intro} the entropy of $M_t^\infty$ is less than or equal to the entropy of $\mathbb{R}^k\times S^{n-k}$. This proves the lemma.
\end{proof}

The cylindrical estimate also implies the following corollary:

\begin{corollary}[{derivative estimates, c.f. \cite[Lem 4.1]{ADS1}}]\label{uestimates}
For any $\theta>0$, there is a constant $C(\theta)<\infty$ such that
\begin{equation}
|u_{\rho}|+|u_{\rho\rho}|+|u_{\rho\rho\rho}|\leq \frac{C(\theta)}{\sqrt{|\tau|}}
\end{equation}
holds in cylindrical region $\mathcal{C}_{\theta}$ for $\tau\ll 0$.
\end{corollary}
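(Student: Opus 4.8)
The plan is to estimate the derivatives separately on a fixed neighbourhood of the axis and on the complementary ``bulk'' of the cylindrical region, reducing in each case to interior parabolic estimates; the argument runs parallel to \cite[Lemma 4.1]{ADS1}, now fed by Theorem \ref{thm_shap_asympt} and Proposition \ref{eclosecylinder}. For an arbitrary fixed $R<\infty$ I would first treat $\{\rho\leq R\}\cap\mathcal{C}_\theta$. There the evolution equation \eqref{ev_eq_u} for $u$ is uniformly parabolic with smooth, uniformly bounded coefficients for $\tau\ll0$ (the drift $\tfrac{k-1}{\rho}-\tfrac{\rho}{2}$ being bounded on $\{\rho\leq R\}$, and $u$ being bounded above and below by \eqref{conv_to_cyl}), so the refined expansion \eqref{inw_quad2} of Proposition \ref{prop_parabolic} — which holds for $\rho\leq\delta^{-1}(\tau)$, hence for $\rho\leq 2R$ once $\tau$ is negative enough — bootstraps via standard interior Schauder estimates to $\|u-\sqrt{2(n-k)}\|_{C^{3}(\{\rho\leq R\})}\leq C(R)|\tau|^{-1}$, in particular $|u_\rho|+|u_{\rho\rho}|+|u_{\rho\rho\rho}|\leq C(R)/|\tau|\leq C(R)/\sqrt{|\tau|}$ there.

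Next I would establish the gradient bound $|u_\rho|\leq C(\theta)/\sqrt{|\tau|}$ on \emph{all} of $\mathcal{C}_\theta$ from convexity plus the sharp asymptotics. Since $M_t$ is convex, $u(\cdot,\tau)$ is concave and nonincreasing with $u_\rho(0,\tau)=0$. By the description of the intermediate region in Theorem \ref{thm_shap_asympt}, for $\tau\ll0$ the region $\mathcal{C}_\theta$ is contained in $\{\rho\leq\sigma_\theta|\tau|^{1/2}\}$ for some $\sigma_\theta<\sqrt2$, while $\bar d(\tau)=\sqrt{2|\tau|}(1+o(1))$; fixing $\sigma_1\in(\sigma_\theta,\sqrt2)$, the value $u(\sigma_1|\tau|^{1/2},\tau)$ is therefore defined and nonnegative for $\tau\ll0$, so concavity gives
\[
0\;\geq\; u_\rho(\rho,\tau)\;\geq\;\frac{u(\sigma_1|\tau|^{1/2},\tau)-u(\rho,\tau)}{(\sigma_1-\sigma_\theta)|\tau|^{1/2}}\;\geq\;\frac{-u(0,\tau)}{(\sigma_1-\sigma_\theta)|\tau|^{1/2}}\;\geq\;-\frac{C(\theta)}{|\tau|^{1/2}}
\]
for $\rho\leq\sigma_\theta|\tau|^{1/2}$, using $u(0,\tau)\leq\sqrt{2(n-k)}+1$ from \eqref{conv_to_cyl}. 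Hence $|u_\rho|\leq C(\theta)/\sqrt{|\tau|}$ throughout $\mathcal{C}_\theta$.

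For a point $(\rho_0,\tau_0)\in\mathcal{C}_\theta$ with $\rho_0\geq R$ I would then pass to the un-renormalized flow and rescale to unit scale: with $t_0=-e^{-\tau_0}$, $r_0=|t_0|^{1/2}\rho_0$ and $R_0:=c(\theta)|t_0|^{1/2}$, set $v(y,s):=U(r_0+R_0y,\,t_0+R_0^2s)$ on the unit parabolic cylinder $Q_1$, where $U$ solves \eqref{profile_intro}. One checks that $Q_{R_0}(r_0,t_0)$ stays inside the slightly larger region $\mathcal{C}_{\theta/4}$ — using $\bar d(\tau)\sim\sqrt{2|\tau|}$ and the gradient bound just proved — so that on it $U$ is comparable to $|t_0|^{1/2}$, $r\geq R_0$, and $|u_\rho|,\,u|u_{\rho\rho}|$ are small by Proposition \ref{eclosecylinder}. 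The equation for $v$ is uniformly parabolic, but $v$ and the zeroth-order term $R_0^2(n-k)/U$ are of size $\sim|t_0|^{1/2}$; the key point is that after differentiating the equation once, $p:=v_y$ satisfies a uniformly parabolic equation whose coefficients are bounded \emph{uniformly in $\tau_0$} — the dangerous zeroth-order term becomes the bounded coefficient $R_0^2(n-k)/U^2$, the $\tfrac{k-1}{r}$-terms are bounded because $r\geq R_0$, and the quasilinear terms are bounded because $u_\rho\to0$ and $u|u_{\rho\rho}|\to0$. Interior Schauder estimates then give $|p_y(0,0)|+|p_{yy}(0,0)|\leq C\|p\|_{L^\infty(Q_1)}=CR_0\|u_\rho\|_{L^\infty(\mathcal{C}_{\theta/4})}\leq C(\theta)R_0/|\tau_0|^{1/2}$, and unwinding the rescaling (which turns $v_{yy}$ into $R_0^2|t_0|^{-1/2}u_{\rho\rho}$ and $v_{yyy}$ into $R_0^3|t_0|^{-1}u_{\rho\rho\rho}$) yields $|u_{\rho\rho}(\rho_0,\tau_0)|+|u_{\rho\rho\rho}(\rho_0,\tau_0)|\leq C(\theta)/|\tau_0|^{1/2}$, completing the proof.

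The hard part is the bookkeeping in the last step: the renormalization drift — equivalently the $\sim|t|^{1/2}$-size zeroth-order term in the profile equation for $U$ — only becomes a bounded coefficient \emph{after} differentiating the PDE, so one must differentiate before applying interior estimates and then verify that every resulting coefficient stays bounded as $\tau_0\to-\infty$. This forces one to combine, on the same rescaled cylinder, the cylindrical estimate (Proposition \ref{eclosecylinder}), the just-proved $|u_\rho|\lesssim|\tau|^{-1/2}$ bound, the lower bound $u\geq\theta/2$, and the location control $\rho\lesssim|\tau|^{1/2}$ together with $\bar d(\tau)\sim\sqrt{2|\tau|}$ coming from the sharp asymptotics; a naive interior estimate on unit cylinders without this preprocessing only gives $\to0$, not the rate $|\tau|^{-1/2}$.
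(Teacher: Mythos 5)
Your proposal is correct and follows essentially the same route as the paper: the paper's proof consists precisely of the gradient bound $|u_\rho|\leq C(\theta)/\sqrt{|\tau|}$ obtained from Theorem \ref{thm_shap_asympt} plus convexity (your concavity/secant argument), followed by ``standard interior estimates,'' of which your rescaling of the unrenormalized flow at scale $|t_0|^{1/2}$ and differentiating the profile equation before applying Schauder is just the spelled-out implementation. The only small slip is the claim that the drift $\tfrac{k-1}{\rho}-\tfrac{\rho}{2}$ is bounded on $\{\rho\leq R\}$ — it is singular at $\rho=0$ — but this is harmless since near the axis one simply views $u(|y'|,\tau)$ as a smooth rotationally symmetric function on $\mathbb{R}^k$, where the equation is uniformly parabolic with smooth coefficients.
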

\begin{proof}
By Theorem \ref{thm_shap_asympt} (sharp asymptotics) and convexity, we have
\begin{equation}
|u_\rho|\leq \frac{C(\theta)}{\sqrt{|\tau|}}.
\end{equation}
Together with standard interior estimates this yields the assertion.
\end{proof}

\bigskip

\subsection{Difference between solutions}\label{sec_difference}
From now on $M_1(t)$ and $M_2(t)$ denote two $\mathrm{SO}(k)\times \mathrm{SO}(n+1-k)$-symmetric ancient ovals.

Similarly as in \cite[Figure 1]{ADS2} we consider the following regions, which for concreteness are defined via the renormalized profile function $u_1$:

\begin{definition}[regions]\label{regions}
Fixing $\theta>0$ sufficiently small and $L<\infty$ sufficiently large, we call $\mathcal{C}_\theta = \{ u_1\geq \theta/2\}$ the \emph{cylindrical region}
and $\mathcal{T}_\theta = \{u_1 \leq 2\theta\}$ the \emph{tip region}. The tip region is the union of the \emph{soliton region} $\mathcal{S}_L=\{ u_1\leq L/\sqrt{|\tau|}\}$ and the \emph{collar region} $\mathcal{K}_{\theta,L}=\{ L/\sqrt{|\tau|}\leq u_1 \leq 2\theta\}$.
\end{definition}

To localize in the cylindrical region, we fix a smooth monotone function $\Phi$ satisfying $\Phi(\xi)=1$ for $\xi\leq \sqrt{2-\theta^2/(n-k)}$ and $\Phi(\xi)=0$ for $\xi\geq\sqrt{2-\theta^2/4(n-k)}$ and set
\begin{equation}\label{eq_cutoff_cyl}
\varphi_{\mathcal{C}}(\rho, \tau)=\Phi(\rho/|\tau|). 
\end{equation}
Observe that by Theorem \ref{thm_shap_asympt} (sharp asymptotics) we have
\begin{equation}
\textrm{spt}(\varphi_{\mathcal{C}})\Subset \mathcal{C}_\theta\quad \mathrm{and}\quad \varphi_{\mathcal{C}}\equiv 1\,\,\mathrm{ on }\,\,\mathcal{C}_{2\theta}.
\end{equation}
Similarly, to localize in the tip region, we fix a smooth monotone function $0\leq \varphi_{\mathcal{T}}(u)\leq 1$ such that
\begin{equation}\label{eq_cutoff_tip}
\textrm{spt}(\varphi_{\mathcal{T}})\Subset \mathcal{T}_\theta\quad \mathrm{and}\quad \varphi_{\mathcal{T}}\equiv 1\,\,\mathrm{ on }\,\,\mathcal{T}_{\theta/2}.
\end{equation}
  
Now, for any real numbers $\beta$ and $\gamma$, consider the time-shifted and parabolically dilated flow
\begin{equation} 
M^{\beta\gamma}_2(t)=e^{\gamma/2}M_2(e^{-\gamma}(t-\beta)).
\end{equation}
We denote by $u_2^{\beta\gamma}$ the renormalized profile functions of $M_2^{\beta\gamma}(t)$.

\begin{proposition}[{orthogonality}]\label{+0}
For any $\eps>0$ there exists $\tau_\ast\ll 0$, such that for any $\tau_0 \leq \tau_\ast$ we can find parameters $\beta, \gamma$ with    $ |\beta|\leq \frac{\eps}{|\tau_0|e^{\tau_0}}$ and $|\gamma|\leq \eps|\tau_0|$ such that at time $\tau_0$ we have
\begin{equation}
    \langle 1, \varphi_{\mathcal{C}}(u_1-u_2^{\beta\gamma}) \rangle_{\mathcal{H}}=0=\langle \rho^2-2k, \varphi_{\mathcal{C}}(u_1-u_2^{\beta\gamma})\ \rangle_{\mathcal{H}}.
\end{equation}
\end{proposition}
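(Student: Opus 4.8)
The plan is to set up a degree-theoretic argument in the two parameters $(\beta,\gamma)$, exactly along the lines of \cite[Section 4]{ADS2}, adapted to the radial Ornstein-Uhlenbeck setting with the extra drift term $\tfrac{k-1}{\rho}\partial_\rho$. The key point is that the time-shift and dilation of $M_2$ interact, to leading order in $|\tau_0|$, precisely with the two nonnegative eigenmodes of $\mathcal{L}$: shifting by $\beta$ corresponds to the unstable mode $1\in\mathcal{H}_+$, while dilating by $\gamma$ corresponds to the neutral mode $\rho^2-2k\in\mathcal{H}_0$. So one defines, for fixed $\tau_0\ll 0$, the map
\begin{equation*}
G_{\tau_0}:(\beta,\gamma)\longmapsto\Big(\langle 1,\varphi_{\mathcal{C}}(u_1-u_2^{\beta\gamma})\rangle_{\mathcal{H}},\ \langle \rho^2-2k,\varphi_{\mathcal{C}}(u_1-u_2^{\beta\gamma})\rangle_{\mathcal{H}}\Big),
\end{equation*}
on a suitable rectangle $|\beta|\leq \tfrac{\eps}{|\tau_0|e^{\tau_0}}$, $|\gamma|\leq \eps|\tau_0|$, and shows it has a zero by a degree/intermediate-value argument.

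First I would record how $u_2^{\beta\gamma}$ depends on $(\beta,\gamma)$ near $(0,0)$: differentiating the renormalization \eqref{profile_ren_intro}, a time-shift $\beta$ produces, at renormalized time $\tau_0$, a variation of size $\sim \beta\,e^{-\tau_0}$ in the direction of the profile's $\tau$-derivative, while a parabolic dilation $\gamma$ produces the variation $\tfrac{\gamma}{2}(u - \rho u_\rho)$ plus a time reparametrization of size $\sim\gamma$. Using the sharp asymptotics from Theorem \ref{thm_shap_asympt} (parabolic region), on the support of $\varphi_{\mathcal{C}}$ we have $u_2 = \sqrt{2(n-k)} + O(|\tau_0|^{-1})$ and $u - \rho u_\rho = \sqrt{2(n-k)} + O(|\tau_0|^{-1})$, so that $\partial_\gamma u_2^{\beta\gamma}|_{0}$ is $\tfrac{\sqrt{2(n-k)}}{2}$ plus lower order, which pairs nontrivially with $\rho^2-2k$ and essentially trivially (to leading order) with $1$ after accounting for the time-reparametrization term; and $\partial_\beta u_2^{\beta\gamma}|_0$ pairs nontrivially with $1$. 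Concretely, I would show that the Jacobian of $G_{\tau_0}$ at $(0,0)$, in the rescaled parameters $(\tilde\beta,\tilde\gamma)=(|\tau_0|e^{\tau_0}\beta,\ \gamma/|\tau_0|)$, converges as $\tau_0\to-\infty$ to an invertible $2\times2$ matrix (upper triangular to leading order, with the time-reparametrization contribution of the $\gamma$-variation to the $1$-mode being the one that needs care). Combined with the bound $|G_{\tau_0}(0,0)|\leq C|\tau_0|^{-2}$ in the rescaled parameters — which follows from Proposition \ref{prop_parabolic} applied to both $u_1$ and $u_2$, since both obey the same sharp asymptotics and hence their difference is $o(|\tau_0|^{-1})$ on $\mathrm{spt}(\varphi_{\mathcal{C}})$ — a standard degree argument (or the quantitative inverse function theorem) yields a zero $(\beta,\gamma)$ inside the prescribed rectangle, provided $\tau_0\leq\tau_\ast$ for $\tau_\ast$ sufficiently negative.

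The main obstacle is controlling the variation of $u_2^{\beta\gamma}$ \emph{uniformly} on the $(\beta,\gamma)$-rectangle, not merely infinitesimally at the origin: one needs that the map $(\beta,\gamma)\mapsto u_2^{\beta\gamma}(\cdot,\tau_0)\big|_{\mathrm{spt}\,\varphi_{\mathcal{C}}}$ is $C^1$ with derivatives that stay close, throughout the rectangle, to their values at $(0,0)$. This requires a priori control on the time-derivative of $u_2$ and on $u_2-\rho (u_2)_\rho$ over the whole renormalized time interval swept out by $|\gamma|\leq\eps|\tau_0|$, which is where one invokes the derivative estimates of Corollary \ref{uestimates} together with Proposition \ref{prop_parabolic}; the smallness of $\eps$ is used precisely to keep the swept-out time interval short enough that the asymptotic expansion degrades only by a bounded factor. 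Once this uniform $C^1$-control is in hand, the degree argument closes routinely, and the asserted bounds $|\beta|\leq\frac{\eps}{|\tau_0|e^{\tau_0}}$ and $|\gamma|\leq\eps|\tau_0|$ are exactly the size of the rectangle on which we ran the argument.
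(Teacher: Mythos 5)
Your overall strategy (degree theory in the two parameters, matching the time-shift with the constant mode and the dilation with the neutral mode, fed by the parabolic asymptotics) is the same as the paper's, but the specific route you take — compute the Jacobian of $G_{\tau_0}$ at $(0,0)$ and run a quantitative inverse function theorem — has a genuine gap, and your leading-order bookkeeping of how the parameters act is partly wrong. A pure parabolic dilation acts on the \emph{renormalized} flow simply as a translation in renormalized time, so $\partial_\gamma u_2^{0\gamma}\big|_{\gamma=0}=u_{2,\tau}$; the term $\tfrac12(u-\rho u_\rho)$ you single out is cancelled to leading order by what you call the time-reparametrization (the MCF part of the renormalized equation), and in any case a constant pairs to \emph{zero} against $\rho^2-2k$. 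The honest size of the relevant Jacobian entry is therefore $\langle \varphi_{\mathcal C}u_{2,\tau},\rho^2-2k\rangle_{\mathcal H}\sim |\tau_0|^{-2}$, and to make your IFT argument close you would need this entry, uniformly over the whole rectangle $|\gamma|\le\eps|\tau_0|$, with \emph{relative} precision (correct sign and size up to small relative error), plus a bound showing the off-diagonal entry $\langle\varphi_{\mathcal C}u_{2,\tau},1\rangle_{\mathcal H}$ is not larger. Neither follows from the tools you cite: Proposition \ref{prop_parabolic} is a statement about the \emph{values} of $u$ with an $o(|\tau|^{-1})$ error that cannot be differentiated in $\tau$, and Corollary \ref{uestimates} only gives crude $O(|\tau|^{-1/2})$ spatial derivative bounds, which are far too weak to pin down a quantity of size $|\tau_0|^{-2}$. (Your auxiliary claim $|G_{\tau_0}(0,0)|\le C|\tau_0|^{-2}$ is likewise unjustified — the asymptotics only give $o(|\tau_0|^{-1})$ — though that weaker bound would actually suffice; similarly the stated size $\sim\beta e^{-\tau_0}$ of the $\beta$-variation should be $\sim\beta e^{\tau_0}$, acting mainly through the constant mode via the induced dilation $\sqrt{1+\beta e^{\tau}}$.)

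The missing idea, which is exactly how the paper proceeds, is to avoid differentiating in the parameters altogether. One changes variables to $B=\sqrt{1+\beta e^{\tau}}-1$ and $\Gamma=(\gamma-\log(1+\beta e^{\tau}))/\tau$, so that $v_2^{\beta\gamma}(\rho,\tau)=B+(1+B)\,v_2\!\left(\tfrac{\rho}{1+B},(1+\Gamma)\tau\right)$, and then applies the parabolic expansion of Proposition \ref{prop_parabolic} \emph{at the shifted time} $(1+\Gamma)\tau$ (legitimate since $|\Gamma|\le 1/2$ keeps that time very negative). This computes the two pairings as finite differences, $\langle 1,\cdot\rangle\approx B$ and $\langle \rho^2-2k,\cdot\rangle\approx \tfrac{\Gamma}{4(1+\Gamma)|\tau|}$, up to $o(|\tau|^{-1})$ uniformly on the region $\{|\tau|^2B^2+\Gamma^2\le\eta^2\}$; only $C^0$-closeness to the explicit model map $(B,\Gamma)\mapsto\big(B,\tfrac{\Gamma}{4(1+\Gamma)|\tau|}\big)$ is needed, and the homotopy/winding-number argument then produces the zero inside the prescribed parameter ranges. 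If you want to salvage your Jacobian route, you would have to extract derivative information with relative precision from inside the proof of Proposition \ref{prop_parabolic} (e.g.\ the ODE $\tfrac{d}{d\tau}\alpha_0=-c\alpha_0^2+o(\alpha_0^2)$ for the neutral coefficient, together with a quantitative decay of the unstable coefficient), which is substantially more work than the finite-difference argument and is not supplied in your proposal.
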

\begin{proof}
We will use a degree theory argument, similarly as in \cite[Section 4]{ADS2}.
 For convenience, we write $u_{i}=\sqrt{2(n-k)}(1+v_{i})$ and set 
\begin{equation}\label{bbgg}
    B=\sqrt{1+\beta e^{\tau}}-1,\qquad \Gamma=\frac{\gamma-\log(1+\beta e^{\tau})}{\tau}.
\end{equation}
Then, we have
\begin{equation}
    v^{\beta\gamma}_{2}(\rho,\tau)=B+(1+B)v_{2}\left(\frac{\rho}{1+B}, (1+\Gamma)\tau\right).
\end{equation}
Our goal is to find a suitable zero of the map
\begin{equation}
    \Phi(B, \Gamma):=\left(
    \langle 1, \varphi_{\mathcal{C}}(v_{2}^{\beta\gamma}-v_1) \rangle_{\mathcal{H}}, \langle \rho^2-2k, \varphi_{\mathcal{C}}(v_{2}^{\beta\gamma}-v_1) \rangle_{\mathcal{H}}\right),
\end{equation}
where $\beta=\beta(B,\Gamma)$ and $\gamma=\gamma(B,\Gamma)$ are defined via \eqref{bbgg}.
To this end, we start with the following estimate:
\begin{claim}\label{claim_deg_err}
For every $\eta>0$ there exists $\tau_{\eta}>-\infty$, such that for all $\tau\leq \tau_{\eta}$ and all $B\in [-1/|\tau|,1/|\tau|]$ and $\Gamma \in [-1/2,1/2]$ we have
\begin{multline}
    \left|\left\langle \frac{1}{\|1\|_{\mathcal{H}}}, \varphi_{\mathcal{C}}(v_{2}^{\beta\gamma}-v_1)\right\rangle_{\mathcal{H}} -B\right|+\\
    \left|\left\langle \frac{\rho^2-2k}{\|\rho^2-2k \|_{\mathcal{H}}},  \varphi_{\mathcal{C}}(v_{2}^{\beta\gamma}-v_1)\right\rangle_{\mathcal{H}} -\frac{\Gamma}{4(1+\Gamma)|\tau|}\right|\leq \frac{\eta}{|\tau|}.
\end{multline}
\end{claim}
\begin{proof}[{Proof of Claim \ref{claim_deg_err}}]
By Proposition \ref{prop_parabolic}, the truncated functions $\hat{v}_i:=\varphi_{\mathcal{C}}v_i$ satisfy
\begin{equation}
    \hat{v}_{i}(\rho, \tau)=-\frac{\rho^2-2k}{4|\tau|}+o(|\tau|^{-1})
\end{equation}
in $\mathcal{H}$-norm. Since $|B|\leq 1/|\tau|$ and $|\Gamma|\leq 1/2$, this implies
\begin{align*}
    \hat{v}_1-\hat{v}_2^{\beta\gamma}&=-\frac{\rho^2-2k}{4|\tau|}-\left[B+(1+B)\frac{\left(\frac{\rho}{1+B}\right)^2-2k}{4(1+\Gamma)|\tau|}\right]+o(|\tau|^{-1})\\
    &=-B-\frac{\Gamma}{1+\Gamma}\frac{\rho^2-2k}{4|\tau|}+o(|\tau|^{-1})
\end{align*}
in $\mathcal{H}$-norm. This yields the assertion.
\end{proof}

\noindent Now, consider the map
\begin{equation}
    \Psi(B, \Gamma):=\left(
   B ,
    \frac{\Gamma}{4(1+\Gamma)|\tau|} \right).
\end{equation}
\noindent By Claim \ref{claim_deg_err}, fixing $\eta$ small enough, for $\tau\ll 0$ the maps $\Phi$ and $\Psi$ are homotopic to each other when restricted to the boundary of 
\begin{equation}
    D:=\{(B, \Gamma): |\tau|^2B^2+\Gamma^2\leq \eta^2\},
\end{equation}
where the homotopy can be chosen trough maps avoiding the origin.
Because the winding number of $\Psi|_{\partial D}$ around the origin is $1$, the map $\Phi$ must hit the origin for some $(B_0,\Gamma_0)\in D$. This proves the proposition.
\end{proof}

From now on, given $\tau_0\ll 0$, we simply write $u_2=u_2^{\beta\gamma}$, where the parameters $\beta$ and $\gamma$ are from Proposition \ref{+0} (orthogonality), and set
\begin{equation}
w:=u_1-u_2.
\end{equation}
Similarly, we denote by
\begin{equation}
W:=Y_1-Y_2
\end{equation}
the difference of the inverse profile functions $Y_1$ and $Y_2=Y_2^{\beta\gamma}$.

\bigskip

\subsection{Energy estimates in the cylindrical region}\label{sec_energy_cyl}

The goal of this subsection is to prove the following energy estimate for $w=u_1-u_2$ in the cylindrical region $\mathcal{C}_\theta=\{u_1\geq \theta/2\}$:

\begin{proposition}[{energy estimate in cylindrical region}]\label{cylindricalestimates}
For every small $\eps,\theta>0$ there exists $\tau_\ast>-\infty$, such that if $w$ satisfies $\langle w_{\mathcal{C}},1\rangle_{\mathcal{H}} =0$ at some time $\tau_0\leq \tau_\ast$, then 
\begin{equation}
 \| \hat{w}_{\mathcal{C}}\|_{\mathcal{D},\infty}\leq \varepsilon
    \big(\|{w}_{\mathcal{C}}\|_{\mathcal{D},\infty}+\|\chi_{D_{\theta}}w\|_{\mathcal{H},\infty}\big),
\end{equation}
where $D_{\theta}=\{ \theta/2\leq u_{1}\leq \theta\}$ and $\hat{w}_{\mathcal{C}}=w_\mathcal{C}-\frac{\langle w_\mathcal{C},\rho^2-2k\rangle_{\mathcal{H}} (\rho^2-2k)}{\| \rho^2-2k\|_{\mathcal{H}}^2}$.
\end{proposition}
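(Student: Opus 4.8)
The plan is to adapt the energy method from \cite[Section 6]{ADS2} to the radially symmetric setting with $k\geq 2$. First I would derive the evolution equation satisfied by $w=u_1-u_2$. Writing the right-hand side of \eqref{ev_eq_u} as an operator applied to $u_i$ and subtracting, the linear part is precisely the radial Ornstein-Uhlenbeck operator $\mathcal{L}$ from \eqref{OU}, and the remainder is a sum of terms that are quadratically small in $w$ and its first two $\rho$-derivatives, with coefficients controlled by the derivative estimates of Corollary \ref{uestimates} and the cylindrical estimate of Proposition \ref{eclosecylinder}. Multiplying by the cutoff $\varphi_{\mathcal{C}}$ from \eqref{eq_cutoff_cyl}, the truncated difference $w_{\mathcal{C}}=\varphi_{\mathcal{C}}w$ satisfies
\begin{equation*}
(w_{\mathcal{C}})_\tau=\mathcal{L}w_{\mathcal{C}}+\mathcal{E},
\end{equation*}
where the error $\mathcal{E}$ collects the quadratic terms, the commutator terms $[\mathcal{L},\varphi_{\mathcal{C}}]w=2(\varphi_{\mathcal{C}})_\rho w_\rho+\big((\varphi_{\mathcal{C}})_{\rho\rho}+\tfrac{k-1}{\rho}(\varphi_{\mathcal{C}})_\rho-\tfrac{\rho}{2}(\varphi_{\mathcal{C}})_\rho\big)w$, and the cutoff time-derivative $-(\varphi_{\mathcal{C}})_\tau w$. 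The commutator and cutoff terms are supported in $D_\theta=\{\theta/2\leq u_1\leq\theta\}$ and hence contribute only the term $\|\chi_{D_\theta}w\|_{\mathcal{H},\infty}$ on the right, while the quadratic error is bounded in $\mathcal{H}$-norm by $\eps(\|w_{\mathcal{C}}\|_{\mathcal{D}}+\|\chi_{D_\theta}w\|_{\mathcal{H}})$ for $\tau\ll 0$, since the coefficients $|u_{i,\rho}|,\ u_i|u_{i,\rho\rho}|$ are $\leq\eps$ there.

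Next I would run the standard energy argument for the operator $\mathcal{L}$, which is self-adjoint on $\mathcal{H}$ with $\mathcal{H}_+=\mathrm{span}\{1\}$, $\mathcal{H}_0=\mathrm{span}\{\rho^2-2k\}$, and spectral gap to $\mathcal{H}_-$. Decompose $w_{\mathcal{C}}=w_++w_0+w_-$ into its projections, and consider the three functions $\|w_+\|_{\mathcal{H}}^2$, $\|w_0\|_{\mathcal{H}}^2$, $\|w_-\|_{\mathcal{H}}^2$. The orthogonality condition $\langle w_{\mathcal{C}},1\rangle_{\mathcal{H}}=0$ at time $\tau_0$, together with the Merle--Zaag ODE analysis applied to these three quantities (exactly as in \cite[Section 5]{ADS1} and \cite[Section 6]{ADS2}), forces the positive and neutral modes to be dominated by the negative mode, so that $\hat w_{\mathcal{C}}=w_{\mathcal{C}}-w_0$ — which differs from $w_-$ only by $w_+$, and $w_+$ is negligible by the orthogonality and the ODE for $\|w_+\|_{\mathcal{H}}$ — satisfies a coercive differential inequality. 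Concretely, on the negative subspace $\langle \mathcal{L}\hat w_{\mathcal{C}},\hat w_{\mathcal{C}}\rangle_{\mathcal{H}}\leq -\mu\|\hat w_{\mathcal{C}}\|_{\mathcal{D}}^2$ for some $\mu>0$ (using $\|f\|_{\mathcal{D}}^2\sim \|f\|_{\mathcal{H}}^2-\langle\mathcal{L}f,f\rangle_{\mathcal{H}}$), so
\begin{equation*}
\tfrac{d}{d\tau}\|\hat w_{\mathcal{C}}\|_{\mathcal{H}}^2\leq -2\mu\|\hat w_{\mathcal{C}}\|_{\mathcal{D}}^2+C\|\mathcal{E}\|_{\mathcal{D}^\ast}\|\hat w_{\mathcal{C}}\|_{\mathcal{D}}.
\end{equation*}
Integrating over $[\tau-1,\tau]$ and taking the supremum over $\tau\leq\tau_0$ converts this into the parabolic-norm estimate $\|\hat w_{\mathcal{C}}\|_{\mathcal{D},\infty}\leq C\|\mathcal{E}\|_{\mathcal{D}^\ast,\infty}$, and then feeding in the bound on $\mathcal{E}$ and absorbing gives the claimed inequality, after shrinking $\eps$.

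The extra terms compared to \cite{ADS2} that need attention are the ones generated by the first-order operator $\tfrac{k-1}{\rho}\partial_\rho$, which is already part of $\mathcal{L}$ and hence contributes no genuinely new error — it is handled automatically by the spectral theory of the radial Ornstein--Uhlenbeck operator — and the quadratic terms coming from the $-\tfrac{n-k}{u}$ and $\tfrac{k-1}{Y}$ nonlinearities, which by the cylindrical estimate and $u_1\geq\theta/2$ on $\mathrm{spt}(\varphi_{\mathcal{C}})$ have bounded and in fact small coefficients. I expect the main obstacle to be the bookkeeping that shows the truncation and commutator errors are genuinely confined to $D_\theta$ and that all the quadratic terms admit a $\mathcal{D}^\ast$-bound of the form $\eps(\|w_{\mathcal{C}}\|_{\mathcal{D}}+\|\chi_{D_\theta}w\|_{\mathcal{H}})$ rather than merely an $\mathcal{H}$-bound — this requires integrating by parts to move a $\rho$-derivative off the error and onto a test function, exploiting the Gaussian weight, exactly as in \cite[Section 6]{ADS2}; the presence of the weight $\rho^{k-1}$ changes the integration-by-parts boundary term at $\rho=0$, but since $w_{\mathcal{C}}$ and all relevant quantities are smooth and even in $\rho$ there, that boundary term vanishes.
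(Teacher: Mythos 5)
Your proposal is correct and follows essentially the same route as the paper: the same evolution equation for $w_{\mathcal{C}}$ with the error split into interior quadratic terms (controlled via Corollary \ref{uestimates} and the cylindrical estimate) and cutoff/commutator terms supported in $D_\theta$ — including the new term $\tfrac{k-1}{\rho}\varphi_{\mathcal{C},\rho}w$ — all fed into the standard spectral energy estimate that the paper packages abstractly as Lemma \ref{fginequality}. The only cosmetic difference is that you spell out the three-mode ODE analysis (with the orthogonality at $\tau_0$ killing the unstable mode), whereas the paper invokes Ecker's weighted Sobolev inequality and defers to \cite[Section 6.4]{ADS2} for that step.
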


\begin{proof} We will adapt the argument from \cite[Section 6]{ADS2} to our setting. Using \eqref{ev_eq_u} we see that the difference $w=u_1-u_2$ satisfies the equation
\begin{equation}
    (\partial_\tau-\mathcal{L})w=\mathcal{E}[w],
\end{equation}
where $\mathcal{L}$ is the radial Ornstein-Uhlenbeck operator from \eqref{OU_intro}, and
\begin{equation}
    \mathcal{E}[w]=-\frac{u^{2}_{1,\rho}}{1+u^{2}_{1,\rho}}w_{\rho\rho}-\frac{(u_{1,\rho}+u_{2,\rho}) u_{2,\rho\rho}}{(1+u^{2}_{1,\rho})(1+u^{2}_{2,\rho})}w_{\rho}+\frac{2(n-k)-u_{1}u_{2}}{2u_{1}u_{2}}w.
\end{equation}
Now, given $\theta>0$, we recall that $w_{\mathcal{C}}=\varphi_{\mathcal{C}} w$, where $\varphi_{\mathcal{C}} $ is the cutoff function from \eqref{eq_cutoff_cyl} that localizes in the cylindrical region $\mathcal{C}_\theta$. It follows that
\begin{equation}\label{wcevolution}
    (\partial_\tau-\mathcal{L}) w_{\C}=\E [w_{\C}]+\bar{\E}[w, \varphi_{\C}],
\end{equation}
where
\begin{multline}\label{eq_bar_e}
    \bar{\E}[w, \varphi_{C}]=\Big(\varphi_{\C,\tau}-\varphi_{\C, \rho\rho}+ \frac{u^{2}_{1,\rho}}{1+u^{2}_{1,\rho}}\varphi_{\C, \rho\rho}
    +\frac{(u_{1,\rho}+u_{2,\rho})u_{2,\rho\rho}}{(1+u^2_{1,\rho})(1+u^2_{2,\rho})}\varphi_{\C,\rho}\\
    +\frac{\rho}{2}\varphi_{\C,\rho} 
    -\frac{k-1}{\rho}\varphi_{\C,\rho}\Big)w
    +\Big(\frac{2u^{2}_{1,\rho}}{1+u^{2}_{1,\rho}}\varphi_{\C,\rho}-2\varphi_{\C,\rho}\Big)w_{\rho}.
\end{multline}
By Lemma \ref{fginequality} below, thanks to $\langle w_{\mathcal{C}}(\cdot,\tau_0),\rho^2-2k\rangle_{\mathcal{H}} =0$ we infer that
\begin{equation}
 \| \hat{w}_{\mathcal{C}}\|_{\mathcal{D},\infty}\leq C\|  \E [w_{\C}]+\bar{\E}[w, \varphi_{\C}]\|_{\D^\ast,\infty}.
\end{equation}
To estimate the right hand side, recall that by Corollary \ref{uestimates} (derivative estimates) for $(y,\tau)\in \C_\theta$ and $i=1,2$ it holds that
\begin{equation}
|u_{i,\rho}|+|u_{i,\rho\rho}|+|u_{i,\rho\rho\rho}| \leq \frac{C(\theta)}{\sqrt{|\tau|}}.
\end{equation}
Arguing similarly as in \cite[proof of Lemma 6.8]{ADS2} this yields
\begin{equation}
\| \E [w_\C] \|_{\D^\ast,\infty}\leq \eps \| w_\C \|_{\D,\infty}.
\end{equation}
Equation \eqref{eq_bar_e} in contrast to \cite[Equation (6.11)]{ADS2} contains the extra term  $\frac{k-1}{\rho}\varphi_{\C,\rho}w$. To estimate this extra term, we observe that
$\varphi_{\C,\rho}$ is supported in the transition region $D_\theta=\{ \theta/2\leq u_1 \leq \theta \}$ and satisfies 
\begin{equation}
|\varphi_{\C,\rho}|\leq \frac{C(\theta)}{|\tau|}.
\end{equation}
Moreover, we have
\begin{equation}
\rho\geq \sqrt{|\tau|} \quad \textrm{ in } D_\theta.
\end{equation}
Using this, we can estimate
\begin{equation}
    \left\| \frac{k-1}{\rho}\varphi_{\C,\rho}w\right\|_{\mathcal{D}^{*},\infty}\leq  \frac{\eps}{2}\|\chi_{D_{\theta}}w\|_{\mathcal{H},\infty}.
\end{equation}
Then, arguing similarly as in  \cite[proof of Lemma 6.9]{ADS2}, we conclude that
\begin{equation}
\| \bar{\E}[w, \varphi_{\C}] \|_{\D^\ast,\infty}\leq \eps \|\chi_{D_{\theta}}w\|_{\mathcal{H},\infty}.
\end{equation}
Combining the above inequalities, this proves the proposition.
\end{proof}

In the above proof, we used the following lemma:

\begin{lemma}\label{fginequality}
For any differentiable function $f:(-\infty,\tau_0]\to \D$ we have
\begin{equation}
 \| \hat{f}\|_{\mathcal{D},\infty}\leq C\|  (\partial_\tau-\mathcal{L})f \|_{\D^\ast,\infty},
\end{equation}
where $\hat{f}$ denotes the projection of $f$ to the orthogonal complement of $\textrm{ker}(\mathcal{L})$.
\end{lemma}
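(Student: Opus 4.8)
The plan is to prove this by a spectral energy argument, following \cite[Section~6]{ADS2} but with the radial operator $\mathcal{L}$ from \eqref{OU}. First decompose $f=f_{+}+f_{0}+f_{-}$ according to $\mathcal{H}=\mathcal{H}_{+}\oplus\mathcal{H}_{0}\oplus\mathcal{H}_{-}$, and set $g:=(\partial_{\tau}-\mathcal{L})f$, with the corresponding decomposition $g=g_{+}+g_{0}+g_{-}$. The three orthogonal projections are onto fixed $\mathcal{L}$-invariant subspaces, so they commute with $\partial_{\tau}$ and $\mathcal{L}$, whence $(\partial_{\tau}-\mathcal{L})f_{\bullet}=g_{\bullet}$ for $\bullet\in\{+,0,-\}$; moreover $\hat{f}=f_{+}+f_{-}$, since $\ker\mathcal{L}=\mathcal{H}_{0}=\mathrm{span}\{\rho^{2}-2k\}$. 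Because $\mathcal{H}_{+}\oplus\mathcal{H}_{0}$ is spanned by the polynomials $1$ and $\rho^{2}-2k$, both of which lie in $\mathcal{D}$, these projections are bounded on $\mathcal{D}$ and on $\mathcal{D}^{\ast}$ as well; in particular $\|g_{\pm}(\cdot,\tau)\|_{\mathcal{D}^{\ast}}\leq C\|g(\cdot,\tau)\|_{\mathcal{D}^{\ast}}$ and $|\langle g(\cdot,\tau),1\rangle_{\mathcal{H}}|\leq C\|g(\cdot,\tau)\|_{\mathcal{D}^{\ast}}$. Throughout I use the a priori input, valid in all applications (where $f=w_{\mathcal{C}}$), that $\|f(\cdot,\tau)\|_{\mathcal{H}}\to 0$ as $\tau\to-\infty$, together with the hypothesis $\langle f(\cdot,\tau_{0}),1\rangle_{\mathcal{H}}=0$, which makes the unstable coefficient vanish at the right endpoint.

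For the stable part, an integration by parts with the Gaussian weight $e^{-\rho^{2}/4}\rho^{k-1}$ --- in which the lower-order term $\tfrac{k-1}{\rho}\partial_{\rho}$ is absorbed painlessly, this being exactly the weight for which $\mathcal{L}$ is self-adjoint --- gives $\langle-\mathcal{L}\phi,\phi\rangle_{\mathcal{H}}=\|\phi_{\rho}\|_{\mathcal{H}}^{2}-\|\phi\|_{\mathcal{H}}^{2}$, with no boundary contribution at $\rho=0$ since $k\geq 2$. On $\mathcal{H}_{-}$ the spectrum of $\mathcal{L}$ is at most $-1$, so $\langle-\mathcal{L}\phi,\phi\rangle_{\mathcal{H}}\geq\|\phi\|_{\mathcal{H}}^{2}$, hence $\|\phi_{\rho}\|_{\mathcal{H}}^{2}\geq 2\|\phi\|_{\mathcal{H}}^{2}$, and therefore $\langle-\mathcal{L}\phi,\phi\rangle_{\mathcal{H}}\geq c_{0}\|\phi\|_{\mathcal{D}}^{2}$ for some $c_{0}>0$. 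Testing $\partial_{\tau}f_{-}=\mathcal{L}f_{-}+g_{-}$ against $f_{-}$ and combining this coercivity with Young's inequality yields
\begin{equation*}
\tfrac{d}{d\tau}\|f_{-}\|_{\mathcal{H}}^{2}+c_{0}\|f_{-}\|_{\mathcal{D}}^{2}\leq C\|g_{-}\|_{\mathcal{D}^{\ast}}^{2}.
\end{equation*}
Since $\|f_{-}\|_{\mathcal{D}}^{2}\geq\|f_{-}\|_{\mathcal{H}}^{2}$, Gronwall's inequality gives, for $s\leq\tau$,
\begin{equation*}
\|f_{-}(\cdot,\tau)\|_{\mathcal{H}}^{2}\leq e^{-c_{0}(\tau-s)}\|f_{-}(\cdot,s)\|_{\mathcal{H}}^{2}+C\int_{s}^{\tau}e^{-c_{0}(\tau-\sigma)}\|g_{-}(\cdot,\sigma)\|_{\mathcal{D}^{\ast}}^{2}\,d\sigma,
\end{equation*}
and letting $s\to-\infty$, then splitting the remaining integral over the unit intervals $[\tau-j-1,\tau-j]$ and summing the resulting geometric series, we obtain $\|f_{-}(\cdot,\tau)\|_{\mathcal{H}}^{2}\leq C\|g\|_{\mathcal{D}^{\ast},\infty}^{2}$ for every $\tau\leq\tau_{0}$. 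Integrating the displayed differential inequality over $[\tau-1,\tau]$ and inserting this pointwise bound then gives $\int_{\tau-1}^{\tau}\|f_{-}(\cdot,\sigma)\|_{\mathcal{D}}^{2}\,d\sigma\leq C\|g\|_{\mathcal{D}^{\ast},\infty}^{2}$, i.e. $\|f_{-}\|_{\mathcal{D},\infty}\leq C\|g\|_{\mathcal{D}^{\ast},\infty}$.

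For the unstable part, write $f_{+}=a(\tau)\,1$. Since $\mathcal{L}1=1$, the equation $(\partial_{\tau}-\mathcal{L})f_{+}=g_{+}$ becomes the scalar ODE $\dot{a}-a=\|1\|_{\mathcal{H}}^{-2}\langle g(\cdot,\tau),1\rangle_{\mathcal{H}}$. Solving it backwards from $\tau_{0}$ with $a(\tau_{0})=0$ gives $|a(\tau)|\leq C\int_{\tau}^{\tau_{0}}e^{\tau-\sigma}\|g(\cdot,\sigma)\|_{\mathcal{D}^{\ast}}\,d\sigma$, and splitting over unit intervals and summing the geometric series bounds the right-hand side by $C\|g\|_{\mathcal{D}^{\ast},\infty}$ for all $\tau\leq\tau_{0}$. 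Since $f_{+}$ is independent of $\rho$, we have $\|f_{+}(\cdot,\tau)\|_{\mathcal{D}}=\|1\|_{\mathcal{H}}\,|a(\tau)|$, so $\|f_{+}\|_{\mathcal{D},\infty}\leq C\|g\|_{\mathcal{D}^{\ast},\infty}$. Adding the two estimates, $\|\hat{f}\|_{\mathcal{D},\infty}\leq\|f_{+}\|_{\mathcal{D},\infty}+\|f_{-}\|_{\mathcal{D},\infty}\leq C\|(\partial_{\tau}-\mathcal{L})f\|_{\mathcal{D}^{\ast},\infty}$, as desired.

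I expect the core energy estimate for $f_{-}$ to be entirely routine, and, as in \cite[Section~6]{ADS2}, robust. The one point that genuinely needs care is the behavior at the two ends of the interval $(-\infty,\tau_{0}]$: the stable modes are only controlled after invoking the a priori smallness of $f$ as $\tau\to-\infty$, while the unstable mode is controlled precisely because the orthogonality $\langle f(\cdot,\tau_{0}),1\rangle_{\mathcal{H}}=0$ forces its coefficient to vanish at $\tau_{0}$ --- which is exactly the condition arranged by Proposition~\ref{+0}. The neutral mode requires no argument at all, since it is discarded in passing from $f$ to $\hat{f}$.
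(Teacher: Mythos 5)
Your argument is correct, and in substance it is the argument the paper invokes: the paper's own proof just records Ecker's weighted Sobolev inequality (to have the $\mathcal{D}$--$\mathcal{D}^\ast$ framework for $\mathcal{L}$) and then cites the ``standard parabolic energy method'' of \cite[Section 6.4]{ADS2}, which is precisely the spectral splitting into unstable/neutral/stable modes, coercivity of the Dirichlet form on $\mathcal{H}_-$, and the backward ODE for the unstable coefficient that you carry out explicitly. Two differences are worth noting. First, you rightly make explicit the endpoint hypotheses that the lemma as literally stated omits but cannot do without: vanishing of the unstable coefficient at $\tau_0$ (i.e.\ $\langle f(\cdot,\tau_0),1\rangle_{\mathcal{H}}=0$) and boundedness/decay of $\|f(\cdot,\tau)\|_{\mathcal{H}}$ as $\tau\to-\infty$; indeed $f(\rho,\tau)=e^{\tau}\cdot 1$ satisfies $(\partial_\tau-\mathcal{L})f=0$ while $\hat f\neq 0$, so some such hypotheses are genuinely needed, and they do hold in the only application $f=w_{\mathcal{C}}$ thanks to Proposition \ref{+0} and the sharp asymptotics --- this is exactly what your Gronwall step as $s\to-\infty$ and your backward integration from $\tau_0$ use. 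Second, you bypass Ecker's inequality by working only with the symmetric Dirichlet form $\langle-\mathcal{L}\phi,\phi\rangle_{\mathcal{H}}=\|\phi_\rho\|_{\mathcal{H}}^2-\|\phi\|_{\mathcal{H}}^2$ (the drift and the $\tfrac{k-1}{\rho}\partial_\rho$ term being absorbed into the weight $e^{-\rho^2/4}\rho^{k-1}$, with no boundary term at $\rho=0$ since $k\geq 2$), which suffices for your coercivity estimate $\langle-\mathcal{L}\phi,\phi\rangle_{\mathcal{H}}\geq c_0\|\phi\|_{\mathcal{D}}^2$ on $\mathcal{H}_-$; the remaining steps (Gronwall plus summation over unit intervals, and the scalar ODE for the coefficient of $1$) are all in order.
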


\begin{proof}
By Ecker's weighted Sobolev inequality \cite[page 109]{Ecker_logsob} we have
\begin{equation}
\int_0^\infty \rho^2 f(\rho)^2  e^{-\frac{\rho^2}{4}}\rho^{k-1}\, d\rho\leq C(n)\int_0^\infty \left(f(\rho)^2+f_\rho(\rho)^2\right)e^{-\frac{\rho^2}{4}}\rho^{k-1}\, d\rho.
\end{equation}
Hence, $\mathcal{L}:\D\to \D^\ast$ is a bounded linear elliptic operator. Using this, the same standard parabolic energy method as in \cite[Section 6.4]{ADS2} yields the assertion.
\end{proof}

\bigskip

\subsection{Energy estimates in the tip region}\label{sec_energy_tip}
In this subsection, we prove an energy estimate in the tip region $\mathcal{T}_\theta=\{u_1 \leq 2\theta\}$. To state the estimate precisely, we fix a smooth monotone cutoff function $\zeta$ satisfying $\zeta(u)=1$ for $u\geq \theta/2$ and $\zeta(u)=0$ for $u\leq \theta/4$, and consider the weight function
\begin{multline}
    \mu(u, \tau)=-\frac{Y^2(\theta, \tau)}{4}\\
    +\int^{u}_{\theta}\left[\zeta(v)\left(-\frac{Y^2}{4}\right)_{v}+\left(1-\zeta(v)\right)\frac{(n-k)(1+Y^2_{v})}{v}\right]\, dv,
\end{multline}
where here and in the following we write $Y=Y_1$ for brevity. This weight function $\mu$ is used in the definition of the weighted norm
\begin{equation}
    \|F\|_{2,\infty}=\sup_{\tau\leq \tau_{0}}\frac{1}{|\tau|^{1/4}}\left(\int_{\tau-1}^{\tau}\int^{2\theta}_{0}F(u,\sigma)^2 e^{\mu(u,\sigma)}\, du\, d\sigma \right)^{1/2}.
\end{equation}
The goal of this subsection is to prove the following estimate:
\begin{proposition}[energy estimate in tip region]\label{Tip energy estimates}
For every $\theta\ll 1$ there is a constant $C<\infty$, such that for any $\tau_0\ll 0$ we have
\begin{equation}
    \|W_{\mathcal{T}}\|_{2, \infty}\leq \frac{C}{|\tau_{0}|}\|W\chi_{[\theta, 2\theta]}\|_{2, \infty},
\end{equation}
where $W_{\mathcal{T}}=\varphi_{\mathcal{T}}W$ is the truncated difference of the inverse profile functions.
\end{proposition}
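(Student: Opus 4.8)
The plan is to adapt the energy method from \cite[Section 7]{ADS2} to the reversed profile equation \eqref{reversed profiles}, keeping careful track of the extra terms produced by the $-\frac{k-1}{Y}$ summand and by $k\geq 2$. First I would write the equation satisfied by $W=Y_1-Y_2$: subtracting the two copies of \eqref{reversed profiles} gives a linear parabolic equation
\begin{equation*}
W_\tau = a W_{uu} + b W_u + c W,
\end{equation*}
where $a=(1+Y_{1,u}^2)^{-1}$, the coefficient $b$ collects the first-order terms including a factor from $\frac{n-k}{u}$ and from $-\frac{u}{2}$, and $c$ collects the zeroth-order terms, crucially including the new contribution $\frac{k-1}{Y_1Y_2}$ coming from $-\frac{k-1}{Y}$. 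Then I would multiply by $W_{\mathcal{T}}e^{\mu}=\varphi_{\mathcal{T}}We^{\mu}$ and integrate over $[\tau-1,\tau]\times[0,2\theta]$, just as in \cite{ADS2}, producing a localized $L^2$-energy inequality. The weight $\mu$ is designed precisely so that the first-order drift term is absorbed: in the region $u\geq\theta/2$ it reproduces the Gaussian weight $-\tfrac14 Y^2$ (matching the cylindrical norm $\|\cdot\|_{\mathcal{H}}$), while near the tip it is tuned via the $\frac{(n-k)(1+Y_v^2)}{v}$ term so that integration by parts in $u$ does not generate uncontrolled boundary or bulk terms.

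The key steps, in order, are: (1) derive the linear PDE for $W$ and record that the new zeroth-order term $\frac{k-1}{Y_1Y_2}W$ has a definite (good) sign, or at worst is dominated, since by the sharp asymptotics $Y_i\sim\bar{d}(\tau)\sim\sqrt{2|\tau|}$ in the collar region so this term is $O(|\tau|^{-1})W$; (2) perform the weighted energy integration by parts, using the defining ODE for $\mu$ to cancel the drift, exactly as in \cite[Section 7]{ADS2}; (3) handle the cutoff error terms $\bar{\mathcal{E}}[W,\varphi_{\mathcal{T}}]$, which are supported in the transition region $\{\theta\leq u_1\leq 2\theta\}$ and hence contribute the right-hand side $\|W\chi_{[\theta,2\theta]}\|_{2,\infty}$; (4) use the a priori estimates of Section \ref{sec_apriori} — the quadratic concavity estimate (Proposition \ref{u2yyleq0}), the almost Gaussian collar (Corollary \ref{cor_gaussian_collar}), and the cylindrical estimate (Proposition \ref{eclosecylinder}) — to show that $Y_{i,u}^2$ is large in the collar while controlled near the tip, so that the damping term from $-\frac12(Y-uY_u)$ together with the curvature terms dominates, producing the gain of $|\tau_0|^{-1}$; (5) run the standard Gronwall/iteration argument on the supremum over $\tau\leq\tau_0$ to close the estimate with the constant $C/|\tau_0|$.

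I expect the main obstacle to be step (4): ensuring that the new terms arising for $k\geq 2$ — in particular the $\frac{k-1}{Y}$ term and the way it interacts with the weight $\mu$ through the $\frac{(n-k)(1+Y_v^2)}{v}$ piece — really do have a sign or smallness compatible with the $C/|\tau_0|$ gain, rather than merely a bounded contribution. One has to check that the commutator between the cutoff $\zeta$ (which interpolates the two forms of the weight) and the drift does not destroy the cancellation, and that the collar-region smallness from Corollary \ref{cor_gaussian_collar} is quantitative enough. Since $-\frac{k-1}{Y}$ is a lower-order term that decays like $|\tau|^{-1/2}$ uniformly on $\{u\leq 2\theta\}$, I anticipate that it is in fact harmless and can be absorbed into the $\eps$-terms, so that the proof of \cite[Section 7]{ADS2} goes through essentially verbatim; but verifying this robustness of the energy method — that none of the new terms ruins the structure — is the crux. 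The remaining steps are routine adaptations once the coefficient signs and weight identities are confirmed.
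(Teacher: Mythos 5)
Your proposal follows essentially the same route as the paper: derive the linear equation for $W=Y_1-Y_2$, run the weighted energy estimate with the cutoff $\varphi_{\mathcal{T}}$ and the weight $\mu$, control the new $k\geq 2$ terms (in particular $\frac{k-1}{Y_1Y_2}W$) using $Y_i\geq\sqrt{|\tau|}$ in the tip region, and feed in the collar/tip a priori estimates exactly as in \cite[Section 7]{ADS2}. One small correction to your step (4): the $|\tau|$-scale damping that produces the $C/|\tau_0|$ gain comes from the weighted Poincar\'e inequality (Corollary \ref{cor_poincare}, the analogue of the ADS2 inequality adapted to dimension $n+1-k$) used to absorb the bulk term $\bar G\lesssim \eta|\tau|$ into the gradient term, combined with $Y_{1,u}^2\geq c(\theta)|\tau|$ on $[\theta,2\theta]$ — not directly from the drift $-\tfrac12(Y-uY_u)$ — but since that inequality is part of the ADS2 machinery you invoke, your plan is sound.
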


To prove this, we adapt the argument from \cite[Section 7]{ADS2} to our setting. First, we need the following lemma to control the weight function: 

\begin{lemma}[{tip estimates, c.f. \cite[Lemma 7.4 and 7.5]{ADS2}}]\label{Yuestimates}
For any $\eta>0$, there exist $\theta>0$ and $\tau_\ast\ll 0$ such that in the tip region $\mathcal{T}_\theta$ for $\tau\leq \tau_\ast$ we have
\begin{equation}\label{tech_lemm1}
    \frac{\sqrt{|\tau|}}{2(n+1-k)}\leq \frac{|Y_{u}|}{u}\leq\sqrt{|\tau|},\qquad   |Y_{\tau}|\leq \eta\frac{|Y_{u}|}{u},
\end{equation}
and
\begin{equation}\label{tech_lemm2}
\left|\frac{u\mu_{u}}{(n-k)(1+Y^2_{u})}-1\right|\leq\eta, \qquad  |\mu_{\tau}|\leq\eta|\tau|. 
\end{equation}
\end{lemma}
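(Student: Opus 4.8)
The plan is to establish the four estimates in \eqref{tech_lemm1}--\eqref{tech_lemm2} by combining the sharp asymptotics in the tip and collar regions (Theorem \ref{thm_shap_asympt}) with the quadratic concavity and almost Gaussian collar estimates already proved (Proposition \ref{u2yyleq0} and Corollary \ref{cor_gaussian_collar}), proceeding exactly along the lines of \cite[Lemmas 7.4 and 7.5]{ADS2} but keeping track of the extra $-(k-1)/Y$ term in the inverse profile equation \eqref{reversed profiles}. I would treat the collar region $\mathcal K_{\theta,L}=\{L/\sqrt{|\tau|}\le u\le 2\theta\}$ and the soliton region $\mathcal S_L=\{u\le L/\sqrt{|\tau|}\}$ separately, so at the start I fix $L$ large and $\theta$ small depending on $\eta$.

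\textbf{Step 1: the ratio $|Y_u|/u$.} Since $Y_u u_\rho=1$, the bound $\tfrac{\sqrt{|\tau|}}{2(n+1-k)}\le |Y_u|/u\le\sqrt{|\tau|}$ is equivalent to $|u\,u_\rho|$ being pinched between $1/\sqrt{|\tau|}$ and $2(n+1-k)/\sqrt{|\tau|}$, i.e. $-(u^2)_\rho$ pinched between $2/\sqrt{|\tau|}$ and $4(n+1-k)/\sqrt{|\tau|}$. In the collar region this follows from Corollary \ref{cor_gaussian_collar} (which, via its proof, gives $-(u^2)_\rho = \tfrac{4(n-k)}{\rho}(1+o(1))$ together with \eqref{collar.y}), while in the soliton region it follows from the local smooth convergence of $Z(s,\tau)$ to the bowl profile $\bar Z(s)$ in Theorem \ref{thm_shap_asympt} together with the explicit behavior of $\bar Z$ (linear growth of $\bar Z_s$ near the tip, so $u\,u_\rho\to$ a positive multiple of $1/\sqrt{|\tau|}$ there). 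The monotonicity of $\rho\mapsto(u^2)_\rho$ from Proposition \ref{u2yyleq0} is what lets one interpolate cleanly between the two regimes.

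\textbf{Step 2: the time derivative $|Y_\tau|$.} Here I would use the renormalized inverse equation \eqref{reversed profiles} to solve for $Y_\tau$ and bound each term by $\eta\,|Y_u|/u$. The dominant term $\tfrac{n-k}{u}Y_u$ is comparable to $|\tau|^{1/2}\cdot|Y_u|/u$ on the nose, so the content is that $Y_{uu}/(1+Y_u^2)$, $\tfrac12(Y-uY_u)$, and the new term $-(k-1)/Y$ are each $o(1)$ relative to $|Y_u|/u$; in the collar this uses $Y\sim\sqrt{2|\tau|}$ (from \eqref{collar.y}, so $(k-1)/Y$ is of size $|\tau|^{-1/2}$, much smaller than $|Y_u|/u\sim |\tau|^{1/2}$) and the quadratic-concavity control on $Y_{uu}$, and in the soliton region it uses the bowl convergence and that the bowl is translating, so $\widetilde M_t$ moves with bounded speed and $Y_\tau$ is controlled after unwinding the renormalization. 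Convexity gives $Y-uY_u\ge 0$ with the right-size upper bound from \eqref{collar.y}.

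\textbf{Step 3: the weight estimates \eqref{tech_lemm2}.} For $\mu_u$: by definition $\mu_u = \zeta(u)(-Y^2/4)_u + (1-\zeta(u))\tfrac{(n-k)(1+Y_u^2)}{u}$; since $(-Y^2/4)_u = -\tfrac12 YY_u$ and Step 1 gives $YY_u \approx -\tfrac{(n-k)(1+Y_u^2)}{u}\cdot\bigl(\tfrac{u^2}{2(n-k)}\cdot\tfrac{Y_u^2}{1+Y_u^2}\cdot\tfrac{2}{u^2}\bigr)$-type identities, the point is that on the overlap $\{\theta/4\le u\le\theta\}$ where $\zeta$ transitions, both bracketed expressions agree with $\tfrac{(n-k)(1+Y_u^2)}{u}$ up to $\eta$; this is precisely Corollary \ref{cor_gaussian_collar} rewritten. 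For $u\ge\theta$ one has $\zeta\equiv1$ and uses the sharp asymptotics of $Y_1$ in the cylindrical region directly. For $|\mu_\tau|\le\eta|\tau|$: differentiate the defining integral in $\tau$, note $\mu(u,\tau)=-\tfrac14 Y_1(u,\tau)^2$ for $u\ge\theta/2$ so $\mu_\tau=-\tfrac12 Y_1Y_{1,\tau}$ there, bounded by $\tfrac12\sqrt{2|\tau|}\cdot\eta\sqrt{|\tau|}\lesssim\eta|\tau|$ using Step 2 and $Y_1\sim\sqrt{2|\tau|}$; in the inner part the $(1-\zeta)$-integral is over a $u$-interval of length $O(\theta)$ with integrand controlled by Step 2, contributing $O(\theta|\tau|)$, absorbable into $\eta|\tau|$ after shrinking $\theta$.

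\textbf{Main obstacle.} The genuinely delicate point is matching the two descriptions across the collar-to-soliton transition at scale $u\sim L/\sqrt{|\tau|}$: the intermediate/collar asymptotics of Theorem \ref{thm_shap_asympt} degenerate there while the bowl-soliton description only controls $u\lesssim L/\sqrt{|\tau|}$, and one needs the quadratic concavity $(u^2)_{\rho\rho}\le 0$ (equivalently monotonicity of $(u^2)_\rho$) to bridge the gap and get the \emph{two-sided} pinching in \eqref{tech_lemm1} uniformly. The extra $(k-1)/Y$ and $\tfrac{k-1}{\rho}u_\rho$ terms are harmless by comparison since $Y,\rho\sim\sqrt{|\tau|}$ are large throughout $\mathcal T_\theta$, but verifying the uniformity of the transition — as in \cite[Section 7]{ADS2} — is the part that requires care.
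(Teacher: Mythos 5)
Your Step 1 and the $\mu_u$ part of Step 3 are essentially the paper's argument (bowl convergence in the soliton region, intermediate/collar asymptotics at the outer end, monotonicity of $(u^2)_\rho$ from quadratic concavity to bridge, and the almost Gaussian collar estimate for $u\mu_u$). The genuine gap is in Step 2, the collar-region bound $|Y_\tau|\leq\eta\,|Y_u|/u$. You treat $\tfrac{n-k}{u}Y_u$ as a standalone ``dominant term'' and claim that $Y_{uu}/(1+Y_u^2)$, $\tfrac12(Y-uY_u)$ and $-(k-1)/Y$ are each $o(1)$ relative to $|Y_u|/u$. This is false for $\tfrac12(Y-uY_u)$: in the collar one has $Y\sim\sqrt{2|\tau|}$ while $|Y_u|/u$ is pinched between $\sqrt{|\tau|}/2(n+1-k)$ and $\sqrt{|\tau|}$, so $\tfrac{Y}{2}$ is \emph{comparable} to $|Y_u|/u$, not negligible. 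Moreover, even if the remaining terms were negligible, your decomposition would only give $|Y_\tau|\approx (n-k)|Y_u|/u$, i.e.\ a bound with a fixed constant of order $n-k$, never the $\eta$-small bound claimed. The whole point is a near-\emph{cancellation} between the two large terms $\tfrac{(n-k)Y_u}{u}$ (negative) and $\tfrac{Y}{2}$ (positive): rewriting \eqref{reversed profiles} as
\begin{equation*}
Y_{\tau}=\frac{Y_{uu}}{1+Y^2_{u}}+\frac{(n-k)Y_{u}}{u}\left(1+\frac{uY}{2(n-k)Y_{u}}-\frac{u^2}{2(n-k)}-\frac{(k-1)u}{(n-k)Y_{u}Y}\right),
\end{equation*}
the bracket is small precisely because of Corollary \ref{cor_gaussian_collar} (the almost Gaussian collar estimate $\bigl|1+\tfrac{uY}{2(n-k)Y_u}\bigr|\leq\eta'$), together with $u\leq 2\theta$ and $Y,\,|Y_u|Y/u\gtrsim\sqrt{|\tau|}$ for the last two terms, while the $Y_{uu}$ term needs the cylindrical estimate (Proposition \ref{eclosecylinder}, via $u|u_{\rho\rho}|\leq\eps$), not just quadratic concavity. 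You invoke Corollary \ref{cor_gaussian_collar} in Steps 1 and 3 but omit it exactly where it is indispensable.

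This matters downstream: the $\eta$-smallness of $Y_\tau$ (with $\eta$ arbitrarily small, eventually $\eta=\tfrac{1}{4C_0}$ against the Poincar\'e constant) is what makes $|\mu_\tau|\leq\eta|\tau|$ and $\bar G\leq\eta|\tau|$ work in the tip-region energy estimate; a bound $|Y_\tau|\leq C(n,k)|Y_u|/u$ is useless there. Relatedly, your sketch of $|\mu_\tau|\leq\eta|\tau|$ in the inner part is too quick: differentiating the $(1-\zeta)$-portion of the weight in $\tau$ produces $Y_vY_{v\tau}$, a mixed derivative not controlled by Step 2 alone, so one must argue as in \cite[Lemma 7.5]{ADS2} rather than simply integrate the $Y_\tau$ bound. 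Your identification of the collar-to-soliton matching as the ``main obstacle'' is reasonable, but the missing cancellation above is the actual crux of this lemma.
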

\begin{proof}
First observe that the inequalities \eqref{tech_lemm1} hold in the soliton region $\mathcal{S}_{L}$,
since by the sharp asymptotics in the tip region from Theorem \ref{thm_shap_asympt}, the function $Z(s,\tau)=|\tau|^{1/2}\left( Y(|\tau|^{-1/2}s,\tau)-Y(0,\tau)\right)$  converges locally smoothly to the profile function $\bar{Z}(s)$ of the $(n+1-k)$-dimensional bowl.\\
On the other hand, by the sharp asymptotics in the intermediate region from Theorem \ref{thm_shap_asympt},  the estimate
\begin{equation}\label{first_tech_est}
   \frac{\sqrt{|\tau|}}{2(n+1-k)}\leq \frac{|Y_{u}|}{u}\leq\sqrt{|\tau|}
\end{equation}
also holds for $u=2\theta$. Together with the fact that the function $u\mapsto |Y_u|/u$ is monotone by Proposition \ref{u2yyleq0} (quadratic concavity), we infer that the estimate \eqref{first_tech_est}
holds in the whole tip region $\mathcal{T}_\theta$.\\
Next, to check that $|Y_\tau|\leq \eta |Y_u|/u$ also holds in the collar region $\mathcal{K}_{\theta,L}$, we rewrite the evolution equation \eqref{reversed profiles} in the form
\begin{equation}
     Y_{\tau}=\frac{Y_{uu}}{1+Y^2_{u}}+\frac{(n-k)Y_{u}}{u}\left(1+\frac{uY}{2(n-k)Y_{u}}-\frac{u^2}{2(n-k)}-\frac{(k-1)u}{(n-k)Y_{u}Y}\right).
\end{equation}
By Proposition \ref{eclosecylinder} (cylindrical estimate),  we can choose $L$ large enough such that 
\begin{equation}
\left|\frac{Y_{uu}}{1+Y^2_{u}}\right| \leq \frac{\eta}{4}\frac{|Y_{u}|}{u}.
\end{equation}
By Corollary \ref{cor_gaussian_collar} (almost Gaussian collar), for $\theta$ small enough we get
\begin{equation}
\left| 1+\frac{uY}{2(n-k)Y_{u}}\right| \leq \frac{\eta}{4(n-k)}.
\end{equation}
Since $u\leq 2\theta$ in the tip region, possibly after decreasing $\theta$ we have
\begin{equation}
\frac{u^2}{2(n-k)} \leq \frac{\eta}{4(n-k)}.
\end{equation}
Finally, using also \eqref{first_tech_est} and the fact that $Y\geq \sqrt{|\tau|}$ in the tip region by  Theorem \ref{thm_shap_asympt} (sharp asymptotics), we can arrange that
\begin{equation}
\left|\frac{(k-1)u}{(n-k)Y_{u}Y}\right| \leq \frac{\eta}{4(n-k)}.
\end{equation}
Combining the above inequalities yields
\begin{equation}
|Y_\tau|\leq \eta \frac{|Y_u|}{u}.
\end{equation}
Next, by definition of the weight function $\mu$ we have
\begin{equation}
u\mu_u  =\zeta(u)\left(\frac{-u Y Y_u}{2}\right) +\left(1-\zeta(u)\right)(n-k)(1+Y^2_{u}).
\end{equation}
Hence, it suffices to show that for $\theta/4\leq u \leq 2\theta$, we have the estimate
\begin{equation}
\left|\frac{u Y Y_u}{2(n-k)(1+Y^2_{u})}+1\right|\leq \eta.
\end{equation}
This easily follows from Corollary \ref{cor_gaussian_collar} (almost Gaussian collar) since in the region under consideration we have $|Y_u|\gg 1$. This proves
\begin{equation}
\left|\frac{u\mu_{u}}{(n-k)(1+Y^2_{u})}-1\right|\leq\eta.
\end{equation}
Finally, using the estimates that we already established and arguing similarly as in \cite[proof of Lemma 7.5]{ADS2}, we obtain $|\mu_\tau|\leq \eta |\tau|$. This concludes the proof of the lemma.
\end{proof}

\begin{corollary} [{Poincare inequality, c.f. \cite[Proposition 7.6]{ADS2}}]\label{cor_poincare}
There are $C_0<\infty$ and  $\tau_\ast\ll 0$ such that for  any $\theta\ll 1$ and all $\tau\leq \tau_\ast$, we have
\begin{equation}\label{poincare}
  \int^{2\theta}_{0}f^2(u) e^{\mu(u, \tau)}du \leq \frac{C_0}{|\tau|}\int^{2\theta}_{0}\frac{f^2_{u}}{1+Y^2_{u}}e^{\mu(u, \tau)}du
\end{equation}
for all smooth functions $f$ satisfying $f'(0)=0$ and $\mathrm{spt}(f)\subseteq [0,2\theta]$.
\end{corollary}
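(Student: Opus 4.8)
The plan is to read \eqref{poincare} as a one-dimensional weighted Hardy--Poincar\'e inequality on the interval $[0,2\theta]$, with weight $w=e^{\mu(\cdot,\tau)}$ on the left-hand side, weight $v=e^{\mu(\cdot,\tau)}/(1+Y_u^2)$ on the right-hand side, and Dirichlet condition $f(2\theta)=0$ coming from $\mathrm{spt}(f)\subseteq[0,2\theta]$, and then to proceed exactly as in \cite[Proposition 7.6]{ADS2}. By the standard characterization of the weighted Hardy inequality it then suffices to bound the Muckenhoupt quantity
\[
B:=\sup_{0<r<2\theta}\Big(\int_0^r e^{\mu(u,\tau)}\,du\Big)\Big(\int_r^{2\theta}(1+Y_u^2)\,e^{-\mu(u,\tau)}\,du\Big)
\]
by $C_0/|\tau|$ with $C_0=C_0(n,k)$. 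One point to note at the outset is that the naive estimate -- bounding $f(u)^2$ by a single weighted Cauchy--Schwarz inequality and then integrating -- is too lossy here (it would produce a constant of order $|\tau|^{-3/2}e^{c|\tau|\theta^2}$, which is useless for fixed $\theta$ as $|\tau|\to\infty$), so one really has to use the sharper Hardy characterization above.

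The structural input is Lemma \ref{Yuestimates}. From the comparison $\mu_u\sim (n-k)(1+Y_u^2)/u$ in \eqref{tech_lemm2} together with $\tfrac{\sqrt{|\tau|}}{2(n+1-k)}\le |Y_u|/u\le \sqrt{|\tau|}$ from \eqref{tech_lemm1}, one obtains, on all of $(0,2\theta]$,
\[
\mu_u\ \ge\ c_1|\tau|\,u\qquad\text{and}\qquad 0<\tfrac{1+Y_u^2}{\mu_u}\le\tfrac{u}{(n-k)(1-\eta)},
\]
for a constant $c_1=c_1(n,k)>0$; in other words, up to the polynomial Jacobian factor $u^{\,n-k}$, $e^\mu$ is a Gaussian in $u$ of variance $\sim 1/|\tau|$, and the $1/|\tau|$ in \eqref{poincare} is exactly its spectral gap. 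Using the Gaussian lower bound I would estimate the first factor by $\int_0^r e^{\mu}\le r\,e^{\mu(r)}$ (monotonicity) and by $\int_0^r e^{\mu}\le \tfrac{2}{c_1|\tau|r}\,e^{\mu(r)}$ (from $\mu(r)-\mu(u)\ge\tfrac{c_1|\tau|r}{2}(r-u)$ for $u\le r$), hence $\int_0^r e^{\mu}\le\min\!\big(r,\tfrac{2}{c_1|\tau|r}\big)e^{\mu(r)}$. For the second factor I would first use the comparison $(1+Y_u^2)e^{-\mu}=\tfrac{1+Y_u^2}{\mu_u}\big(-(e^{-\mu})_u\big)\le\tfrac{u}{(n-k)(1-\eta)}\big(-(e^{-\mu})_u\big)$ and then integrate by parts in the clean factor $u$ (no $\mu_{uu}$ appears), followed again by $\mu(u)-\mu(r)\ge c_1|\tau|r(u-r)$, to get
\[
\int_r^{2\theta}(1+Y_u^2)e^{-\mu}\,du\le\tfrac{1}{(n-k)(1-\eta)}\Big(r\,e^{-\mu(r)}+\int_r^{2\theta}e^{-\mu(u)}\,du\Big)\le\tfrac{1}{(n-k)(1-\eta)}\Big(r+\tfrac{1}{c_1|\tau|r}\Big)e^{-\mu(r)}.
\]
Multiplying the two factors the exponentials cancel and, since $\min\!\big(r,\tfrac{2}{c_1|\tau|r}\big)\big(r+\tfrac{1}{c_1|\tau|r}\big)\le\tfrac{3}{c_1|\tau|}$, one gets $B\le C_0/|\tau|$; the Poincar\'e inequality \eqref{poincare} then follows from \cite[Proposition 7.6]{ADS2} with these ingredients.

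I expect the main obstacle, beyond recognizing that the one-step estimate is insufficient and one must pass through the $\sup_r$ quantity $B$, to be making all bounds uniform in the parameter $\theta$ and uniform across the region $\{\theta/4\le u\le\theta/2\}$ where the cutoff $\zeta$ in the definition of $\mu$ interpolates between its cylindrical form $-Y^2/4$ and its collar form; this is precisely what Lemma \ref{Yuestimates} (stated for the whole tip region $\mathcal T_\theta$) is designed to supply, so that beyond invoking it the argument is essentially identical to \cite[Section 7]{ADS2}, the new $k\ge 2$ contributions (the $\tfrac{k-1}{Y}$ and $\tfrac{(k-1)u}{(n-k)Y_uY}$ terms) having already been absorbed in that lemma.
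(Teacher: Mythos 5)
Your argument is correct, but it follows a genuinely different route from the paper's. The paper's own proof only verifies that for $u\le u_0=4(n+1-k)/\sqrt{|\tau|}$ the weight satisfies $C^{-1}e^{\mu(u_0,\tau)}(u/u_0)^{n-k}\le e^{\mu(u,\tau)}\le Ce^{\mu(u_0,\tau)}(u/u_0)^{n-k}$, so that near the tip the integral can be read as a polar-coordinate integral in $\mathbb{R}^{n+1-k}$, and then invokes the argument of \cite[Proposition 7.6]{ADS2} with $n$ replaced by $n+1-k$ and with the inputs of Lemma \ref{Yuestimates}; in that route the near-tip piece is a Neumann-type Poincar\'e problem on a ball of radius $\sim|\tau|^{-1/2}$ (which is where the hypothesis $f'(0)=0$ enters) and the collar piece uses the Gaussian-type weight. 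You instead anchor $f$ at $u=2\theta$ (the correct reading of $\mathrm{spt}(f)\subseteq[0,2\theta]$, and the relevant one for the application to $W_{\mathcal{T}}=\varphi_{\mathcal{T}}W$, which vanishes near $u=2\theta$), reduce \eqref{poincare} to the Muckenhoupt criterion for the one-dimensional weighted Hardy inequality, and bound the Muckenhoupt quantity by $C/|\tau|$ using precisely the two consequences of Lemma \ref{Yuestimates}, namely $\mu_u\ge c_1|\tau|u$ and $(1+Y_u^2)/\mu_u\le Cu$; your elementary estimates ($\int_0^r e^{\mu}\le\min\big(r,\tfrac{2}{c_1|\tau|r}\big)e^{\mu(r)}$, the integration by parts for $\int_r^{2\theta}(1+Y_u^2)e^{-\mu}$, and the product bound $\le 3/(c_1|\tau|)$) are all valid, including across the interpolation zone $\theta/4\le u\le\theta/2$, since \eqref{tech_lemm2} is stated for the interpolated $\mu_u$ on all of $\mathcal{T}_\theta$, and the resulting $C_0$ depends only on $n,k$ as required. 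What your route buys is a self-contained proof that bypasses re-running the two-region ADS2 argument and in fact never uses $f'(0)=0$; what it costs is the appeal to the classical Muckenhoupt/Hardy characterization (with its factor-$4$ bound), which you should cite explicitly. Two small corrections: the closing clause should say that \eqref{poincare} follows from that characterization, not ``from \cite[Proposition 7.6]{ADS2}'', which is the statement being generalized rather than a tool; and you should record once that a smooth $f$ with support in $[0,2\theta]$ is understood to vanish at (indeed near) $u=2\theta$, since otherwise the inequality fails for constants.
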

\begin{proof}Set $u_0:=4(n+1-k)/\sqrt{|\tau|}$. For $u\leq u_0$ and $\tau\ll0$, by our choice of weight function, we have
\begin{equation}
\mu(u,\tau)-\mu(u_0,\tau) = (n-k)\log\left(\frac{u}{u_0}\right)+(n-k)\int_{u_0}^u \frac{1}{v}Y_v^2\, dv.
\end{equation}
The bound $|Y_v|\leq \sqrt{|\tau|}v$ from Lemma \ref{Yuestimates} (tip estimates) implies
\begin{equation}
\left|\int_{u_0}^u \frac{1}{v}Y_v^2\, dv\right|\leq \frac{1}{2}|\tau| u_0^2 \leq 8(n+1-k)^2.
\end{equation}
This yields
\begin{equation}
C^{-1} e^{\mu(u_0,\tau)}\left(\frac{u}{u_0}\right)^{n-k}\leq e^{\mu(u,\tau)}\leq C e^{\mu(u_0,\tau)}\left(\frac{u}{u_0}\right)^{n-k},
\end{equation}
as needed for integrating in polar coordinates in $\mathbb{R}^{n+1-k}$. Using this and the estimates from Lemma \ref{Yuestimates} (tip estimates), the argument from \cite[proof of Proposition 7.6]{ADS2} with $n$ replaced by $n+1-k$ goes through.
\end{proof}

Using the above results, we can now prove the main energy estimate:
\begin{proof}[Proof of Proposition \ref{Tip energy estimates}] 
Using equation \eqref{reversed profiles}, we see that $W=Y_1-Y_2$ satisfies the evolution equation
\begin{equation}
    W_{\tau}=\frac{W_{uu}}{1+Y^2_{1,u}}
    +\left(\frac{n-k}{u}-\frac{u}{2}+D\right)W_{u}+\left(\frac{1}{2}+\frac{k-1}{Y_{1}Y_{2}}\right)W,
\end{equation}
where 
\begin{equation}
    D=-\frac{(Y_{1,u}+Y_{2,u})Y_{2,uu}}{(1+Y^2_{1,u})(1+Y^2_{2,u})}.
\end{equation}
Multiplying this evolution equation by $W\varphi^2e^\mu$, where $\varphi=\varphi_{\mathcal{T}}$ is the cutoff function from \eqref{eq_cutoff_tip}, and integrating by parts gives
\begin{multline}
\frac{1}{2}\frac{d}{d\tau}\int W^2\varphi^2e^\mu=-\int\frac{W^2_{u}}{1+Y^2_{1,u}}\varphi^2e^{\mu}+G\int W_{u}W\varphi^2e^{\mu}\\
-2\int\frac{1}{1+Y^2_{1,u}}W_{u}W\varphi_{u}\varphi e^{\mu}+\int W^2\varphi^2\left(\frac{1}{2}+\mu_{\tau}+\frac{k-1}{Y_{1}Y_{2}}\right)e^{\mu},
\end{multline}
where 
\begin{equation}
    G=\frac{n-k}{u}-\frac{u}{2}-\frac{\mu_{u}}{1+Y^2_{1,u}}+\frac{2Y_{1,u}Y_{1,uu}}{(1+Y^2_{1,u})^2}+D.
\end{equation}
For the second term we estimate
\begin{align}
GW_uW\leq \frac{1}{2}\frac{W_u^2}{1+Y_{1,u}^2} + \frac{1}{2}G^2(1+Y_{1,u}^2) W^2,
\end{align}
and for the third term we estimate
\begin{equation}
-W_uW\varphi_u \varphi = -(\varphi W)_u W\varphi_u + W^2\varphi_u^2 \leq \tfrac{1}{8}((\varphi W)_u)^2+2W^2\varphi_u^2.
\end{equation}
Recalling also that $\mathrm{spt}(\varphi_u)\subseteq[\theta,2\theta]$, it follows that $W_\mathcal{T}=\varphi W$ satisfies
\begin{equation}\label{WGbar}
\frac{d}{d\tau}\int W^2_{\mathcal{T}}e^{\mu} \leq -\frac{1}{2}\int \frac{(W_{\mathcal{T}})_{u}^2}{1+Y^2_{1,u}} e^{\mu}+\int \bar{G} W^2_{\mathcal{T}} e^{\mu} +C(\theta)\int_\theta^{2\theta} \frac{W^2}{1+Y^2_{1,u}} e^{\mu},
\end{equation}
where 
\begin{equation}
    \bar{G}=G^2(1+Y^2_{1,u})+1+2\mu_{\tau}+\frac{2(k-1)}{Y_{1}Y_{2}}.
\end{equation}
Using Lemma \ref{Yuestimates} (tip estimates) and arguing similarly as in  \cite[proof of Claim 7.7]{ADS2}, given any $\eta>0$, we see that for $\theta\ll 1$ small enough (depending on $\eta$) in the tip region $\mathcal{T}_\theta$ for $\tau\ll 0$ we have
\begin{equation}
\bar{G}\leq \eta |\tau|.
\end{equation}
Indeed, the only new term is $\tfrac{1}{Y_1Y_2}$ which can be easily controlled since $Y_i\geq \sqrt{|\tau|}$ in the tip region. Moreover, note that the estimates from Lemma \ref{Yuestimates} indeed also apply to $Y_2=Y_2^{\beta\gamma}$, since $|\beta|\leq \eps e^{\tau_0}/|\tau_0|$ and $\gamma\leq \eps|\tau_0|$ implies that $Z_2^{\beta\gamma}(s,\tau)\to\bar{Z}(s)$ locally smoothly as $\tau\to -\infty$. Together with Corollary \ref{cor_poincare} (Poincare inequality), we infer that
\begin{equation}
\int \bar{G} W^2_{\mathcal{T}} e^{\mu} \leq \eta|\tau|\int W^2_{\mathcal{T}} e^{\mu}\leq C_0 \eta \int_0^{2\theta}\frac{(W_{\mathcal{T}})_{u}^2}{1+Y^2_{1,u}} e^{\mu}.
\end{equation}
Choosing $\eta=\frac{1}{4C_0}$, and using also that $Y_{1,u}^2\geq c(\theta)|\tau|$ for $u\in [\theta,2\theta]$ by Lemma \ref{Yuestimates} (tip estimates), our energy inequality thus becomes
\begin{equation}
\frac{d}{d\tau}\int W^2_{\mathcal{T}}e^{\mu} \leq -\eta |\tau|\int W^2_{\mathcal{T}} e^{\mu}+\frac{C(\theta)}{|\tau|}\int (W\chi_{[\theta,2\theta]})^2 e^{\mu}.
\end{equation}
Integrating this differential inequality, we conclude that
\begin{equation}
    \|W_{\mathcal{T}}\|_{2, \infty}\leq \frac{C}{|\tau_{0}|}\|W\chi_{[\theta, 2\theta]}\|_{2, \infty}.
\end{equation}
This completes the proof of the proposition.
\end{proof}

\bigskip

\subsection{Conclusion of the proof}\label{sec_conclusion}

In this subsection, we will combine the results from the previous sections to conclude the proof of our main uniqueness theorem:

\begin{proof}[Proof of Theorem \ref{symmetryuniqueness}]
Let $M_1(t)$ and $M_2(t)$ be two $\mathrm{SO}(k)\times \mathrm{SO}(n+1-k)$-symmetric ancient ovals. Assuming $k$ denotes the number of long directions, which is always the case possibly after replacing $k$ by $n-k$, the tangent flow at $-\infty$ is given by \eqref{tangent_bubble_intro}. We now fix small enough constants $\eps>0$ and $\theta>0$, a large enough constant $L<\infty$, and a negative enough time $\tau_0\ll 0$.

As before, we denote by $u_1$ and $Y_1$ the renormalized profile function and inverse profile function of $M_1(t)$. By Proposition \ref{+0} (orthogonality), we can find parameters $\beta, \gamma$ with    $ |\beta|\leq \eps e^{-\tau_0}/|\tau_0|$ and $|\gamma|\leq \eps|\tau_0|$ such that letting $u_2^{\beta\gamma}$ be the renormalized profile function of $M^{\beta\gamma}_2(t)=e^{\gamma/2}M_2(e^{-\gamma}(t-\beta))$, the truncated difference $w_\C=\varphi_\C(u_1-u_2^{\beta\gamma})$ satisfies the orthogonality condition
\begin{equation}\label{eq_orth_concl}
    \langle 1, w_\C (\cdot,\tau_0) \rangle_{\mathcal{H}}=0=\langle \rho^2-2k, w_\C (\cdot,\tau_0) \rangle_{\mathcal{H}}.
\end{equation}
Our goal is to show that $w=u_1-u_2^{\beta\gamma}$  and $W=Y_1-Y_2^{\beta\gamma}$ vanish identically.

Recall that our weight function satisfies $\mu(u,\tau)=-\frac{1}{4}Y_1(u,\tau)^2$ for $u\geq \theta/2$. Hence, similarly as in \cite[Lemma 8.1]{ADS2}, in the transition region $D_{\theta}=\{\theta/2\leq u_{1}\leq \theta\}$, we have the equivalence of norms
\begin{equation}\label{eq_eqiv_n}
    C^{-1}\|W\chi_{[\theta/2, \theta]}\|_{2, \infty}\leq \|w\chi_{D_{\theta}}\|_{\mathcal{H}, \infty}\leq  C\|W\chi_{[\theta/2, \theta]}\|_{2, \infty},
\end{equation}
where $C=C(\theta)<\infty$. Moreover, as before, we let
\begin{equation}\label{eq_decomp_concl}
\hat{w}_\C(\cdot,\tau)=w_\C(\cdot,\tau)- \langle w_\C(\cdot,\tau), \psi_0\rangle_{\mathcal{H}} \psi_0,
\end{equation}
where $\psi_0(\rho)=c_0(\rho^2-2k)$ with $c_0=\| \rho^2-2k\|_{\mathcal{H}}^{-1}$.

Combining Proposition \ref{cylindricalestimates} (energy estimate in the cylindrical region), Proposition \ref{Tip energy estimates} (energy estimate in the tip region) and \eqref{eq_eqiv_n} we infer that 
\begin{align}\label{neutral_dominant_w}
    \|\hat{w}_{\C}\|_{\mathcal{D},\infty}&\leq \eps\nonumber
    \left(\|{w}_{\C}\|_{\mathcal{D},\infty}+\|w\chi_{D_{\theta}}\|_{\mathcal{H,\infty}}\right)\\
    &\leq\eps
    \left(\|{w}_{\C}\|_{\mathcal{D},\infty}+C\|W_{\mathcal{T}}\|_{2,\infty}\right) \nonumber\\
    &\leq\eps
     \left(\|{w}_{\C}\|_{\mathcal{D},\infty}+\frac{C}{{|\tau_{0}|}}\|w_{\C}\|_{\mathcal{H},\infty}\right) \nonumber\\
    &\leq 2\eps
    \|{w}_{\C}\|_{\mathcal{D},\infty}.
\end{align}
We now consider the expansion coefficient
\begin{equation}
a(\tau)=\langle w_\C(\cdot,\tau), \psi_0\rangle_{\mathcal{H}}.
\end{equation}
Using \eqref{wcevolution} and \eqref{cubed_8} we see that
\begin{equation}
    \frac{d}{d\tau}a(\tau)=\frac{2a(\tau)}{|\tau|}+F(\tau),
\end{equation}
where
\begin{equation}
    F(\tau)=\frac{\langle \bar{\E}[w, \varphi_{\C}], \psi_{0}\rangle}{\|\psi_{0}\|^2}+\frac{\langle \E [w_{\C}]-4^{-1}|\tau|^{-1}a(\tau)\psi^{2}_{0}, \psi_{0}\rangle}{\|\psi_{0}\|^2}.
\end{equation}
Since $a(\tau_{0})=0$ thanks to \eqref{eq_orth_concl}, we infer that 
\begin{equation}\label{aF}
    a(\tau)=-\frac{\int_{\tau}^{\tau_0}\sigma^2F(\sigma)\, d\sigma}{\tau^2}.
\end{equation}
Now, we consider the function
\begin{equation}
A(\tau)=\sup_{\sigma\leq \tau}\left(\int_{\sigma-1}^{\sigma}|a(\zeta)|^{2}d\zeta\right)^{1/2}.
\end{equation}
Using Theorem \ref{thm_shap_asympt} (sharp asymptotics) and our a priori estimates from Section \ref{sec_apriori}, and arguing similarly as in \cite[Proof of Claim 8.3]{ADS2}, we see that
\begin{equation}\label{Festimates}
    \int_{\tau-1}^{\tau}|F(\sigma)|d\sigma \leq \frac{\varepsilon}{|\tau|}A(\tau).
\end{equation}
Chopping $[\tau,\tau_0]$ into unit intervals and applying this repeatedly, we infer that
\begin{equation}
\left| \int_{\tau}^{\tau_0} \sigma^2F(\sigma)\, d\sigma\right| \leq \eps |\tau|^2 A(\tau).
\end{equation}
Hence,
\begin{equation}
|a(\tau)|\leq \eps A(\tau_0)
\end{equation}
for all $\tau\leq \tau_0$. This in turn implies $A(\tau_0)\leq \eps A(\tau_0)$, and thus proves that
\begin{equation}
a(\tau)=0\quad \textrm{ for all } \tau\leq \tau_0.
\end{equation}
Together with \eqref{eq_decomp_concl} and \eqref{neutral_dominant_w}, this yields
\begin{equation}
w_\C(\cdot,\tau)=0 \quad \textrm{ for all } \tau\leq \tau_0.
\end{equation}
Finally, by Proposition \ref{Tip energy estimates} (energy estimate in the tip region), this implies
\begin{equation}
W_{\mathcal{T}}(\cdot,\tau)=0 \quad \textrm{ for all } \tau\leq \tau_0.
\end{equation}
We conclude that $w$ and $W$ vanish identically. This completes the proof of our uniqueness theorem.
\end{proof}

\bigskip

\section{Nonuniqueness}\label{sec_nonuniqueness}

The goal of this final section is to prove Theorem \ref{ovalfamily} (existence with reduced symmetry and further properties).

\begin{proof}[Proof of Theorem \ref{ovalfamily}] We work with ellipsoidal parameters in the $(k-1)$-simplex
\begin{equation}
\Delta_{k-1} = \left\{ (a_1,\ldots,a_k)\, : \, a_1\geq 0,\ldots,a_k\geq 0, \sum_{j\leq k} a_j =1 \right\} \, .
\end{equation}
Given any $a\in\Delta_{k-1}$ and any $\ell < \infty$ we consider the (possibly degenerate) ellipsoid
\begin{equation}
E^{\ell,a}:=\left\{ x\in \mathbb{R}^{n+1} \, : \, \sum_{j\leq k} \frac{a_j^2}{\ell^2} x_j^2 + \sum_{j\geq k+1} x_j^2 = 2(n-k) \right\}\, .
\end{equation}
Note that for $a\in \textrm{Int}(\Delta_{k-1})$ this is a compact ellipsoid, while if $a_j=0$ for some index $j$ then $E^{\ell,a}$ can be expressed as the product of an $\mathbb{R}$-factor in $x_j$-direction and an ellipsoid in one dimension lower.

Now, using the explicit formula for ellipsoids it is not hard to see that there are uniform constants $\alpha,\beta>0$ such that the hypersurfaces $E^{\ell,a}$ are $\alpha$-noncollapsed and satisfy $\lambda_1+\ldots+\lambda_{k+1}\geq \beta H$. Also recall that $\alpha$-noncollapsing and $\beta$-uniform $(k+1)$-convexity are preserved under mean curvature flow, rescalings, and passing to limits \cite{Andrews_noncollapsing,HuiskenSinestrari_surgery,HaslhoferKleiner_meanconvex}. Likewise, the $\mathbb{Z}_2^k\times\mathrm{O}(n+1-k)$-symmetry of $E^{\ell,a}$ is also preserved.

Next, observe that
\begin{equation}\label{conv_to_cyl_ell}
\lim_{\ell\to\infty}E^{\ell,a}=\mathbb{R}^k\times S^{n-k}(\sqrt{2(n-k)}).
\end{equation}
By symmetry and \cite{Huisken_convex}  the mean curvature evolution $E^{\ell,a}_t$ becomes extinct in a round point at the origin at some $t_{\ell,a}<\infty$.  Using \eqref{conv_to_cyl_ell} and continuous dependence of the mean curvature flow on the initial data we see that
\begin{equation}\label{eq_t_lim}
\lim_{\ell\to\infty} t_{\ell,a}=1.
\end{equation}
Consider Huisken's monotone quantity \cite{Huisken_monotonicity},
\begin{equation}
\Theta^{\ell,a}(t)=\int_{E^{\ell,a}_{t}} \frac{1}{(4\pi(t_{\ell,a}-t))^{n/2}} e^{-\frac{|x|^2}{4(t_{\ell,a}-t)}}.
\end{equation}
Then, for any large enough $\ell$ we can find a unique $t_{\ell,a}'$ such that
\begin{equation}
\Theta^{\ell,a}(t_{\ell,a}')=\frac{\sigma_{n-k}+\sigma_{n-k+1}}{2},
\end{equation}
where $\sigma_j$ denotes the entropy of the $j$-sphere.
Using again \eqref{conv_to_cyl_ell} and continuous dependence of the mean curvature flow on the initial data we see that
\begin{equation}\label{eq_tp_lim}
\lim_{\ell\to\infty} t_{\ell,a}'=1.
\end{equation}
Now, set
\begin{equation}
\lambda_{\ell,a}:=(t_{\ell,a}-t_{\ell,a}')^{-1/2},
\end{equation}
and consider the parabolically rescaled flows
\begin{equation}
M^{\ell,a}_t := \lambda_{\ell,a} \cdot E^{\ell,a}_{\lambda^{-2}_{\ell,a} t+t_{\ell,a}}.
\end{equation}
By construction $M^{\ell,a}_t$ becomes extinct at the origin at time $0$ and satisfies
\begin{equation}
\int_{M^{\ell,a}_{-1}} \frac{1}{(4\pi)^{n/2}}e^{-\frac{|x|^2}{4}}=\frac{\sigma_{n-k}+\sigma_{n-k+1}}{2}.
\end{equation}
The flow $M^{\ell,a}_t$ is defined for $t\in (T_{\ell,a},0)$, where $T_{\ell,a}=-\lambda_{\ell,a}^2 t_{\ell,a}$, and thanks to \eqref{eq_t_lim} and \eqref{eq_tp_lim} we have
\begin{equation}
\lim_{\ell\to\infty} T_{\ell,a} = -\infty.
\end{equation}

  Now, for $j=1,\ldots,k$ we consider the width at time $-1$ along the $x_j$-axis,
\begin{equation}
    w_{j}^\ell(a):=\sup_{x\in {M}_{-1}^{\ell, a}}|x_{j}|\in (0,\infty].
\end{equation}
Using this we can now define the reciprocal width ratio map
\begin{equation}
F^{\ell}_k : \Delta_{k-1} \to \Delta_{k-1}, \quad a\mapsto \frac{w_{j}^\ell(a)^{-1}}{\sum_{j'=1}^{k}w_{j'}^\ell(a)^{-1}}\, .
\end{equation}

\begin{claim}[reciprocal width ratio map]
$F^{\ell}_{k}$ is continuous and surjective.
\end{claim}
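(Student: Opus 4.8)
The plan is to establish continuity and surjectivity separately: continuity by continuous dependence of mean curvature flow on its (smooth, convex) initial data, and surjectivity by a degeneration analysis of the ellipsoids combined with a soft degree argument.

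\emph{Continuity.} It is convenient to work with the reciprocal widths $g_j^\ell(a):=w_j^\ell(a)^{-1}$, with the convention $g_j^\ell(a)=0$ when $w_j^\ell(a)=+\infty$, so that $F^\ell_k=(g_1^\ell,\dots,g_k^\ell)\big/\sum_j g_j^\ell$. First, $F^\ell_k$ is well-defined: if $a_j>0$ then $E^{\ell,a}$ lies in the slab $\{\abs{x_j}\le \ell\sqrt{2(n-k)}/a_j\}$, and since hyperplanes are static under the flow the avoidance principle gives $w_j^\ell(a)\le \lambda_{\ell,a}\,\ell\sqrt{2(n-k)}/a_j<\infty$; as every $a\in\Delta_{k-1}$ has a positive coordinate, $\sum_j g_j^\ell(a)>0$ for all $a$. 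Next, fixing $\ell$ large enough that $T_{\ell,a}<-1$ for all $a\in\Delta_{k-1}$ — possible by compactness of $\Delta_{k-1}$ together with $\lim_{\ell\to\infty}T_{\ell,a}=-\infty$ — the slice $M^{\ell,a}_{-1}=\lambda_{\ell,a}\,E^{\ell,a}_{t_{\ell,a}'}$ is taken strictly before the extinction time $t_{\ell,a}$ of $E^{\ell,a}_t$, hence is a smooth convex hypersurface depending continuously on $a$ in $C^\infty_{\mathrm{loc}}$, by continuous dependence of smooth mean curvature flow on smooth initial data together with \eqref{eq_t_lim} and \eqref{eq_tp_lim} (the initial ellipsoids, and hence their flows up to time $<t_{\ell,a}$, vary smoothly in $a$ even up to $\partial\Delta_{k-1}$, where $E^{\ell,a}$ degenerates to a cylinder-like product). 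Then $g_j^\ell$ is clearly continuous on $\{a_j>0\}$, while at a point $\bar a$ with $\bar a_j=0$ the limit $M^{\ell,\bar a}_{-1}$ splits off an $\mathbb{R}$-factor in the $x_j$-direction and is thus unbounded in $x_j$, so $C^\infty_{\mathrm{loc}}$-convergence forces $w_j^\ell(a)\to+\infty$, i.e. $g_j^\ell(a)\to 0=g_j^\ell(\bar a)$. Hence all $g_j^\ell$, and therefore $F^\ell_k$, are continuous on $\Delta_{k-1}$.

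\emph{Boundary behaviour and surjectivity.} The structural input is the degeneration statement: if $a_j=0$ then $E^{\ell,a}=\mathbb{R}\times E'$, with the $\mathbb{R}$-factor in the $x_j$-direction and $E'$ a compact ellipsoid in the remaining $n$ coordinates, so the whole flow $M^{\ell,a}_t$ splits off this $\mathbb{R}$-factor; hence $w_j^\ell(a)=+\infty$ and $F^\ell_k(a)_j=0$, so $F^\ell_k$ maps $\partial\Delta_{k-1}$ into $\partial\Delta_{k-1}$. (Matching the extinction times, Huisken densities and rescaling factors through the product formula, and using $\sigma_{(n-1)-(k-1)}+\sigma_{(n-1)-(k-1)+1}=\sigma_{n-k}+\sigma_{n-k+1}$, one in fact identifies $M^{\ell,a}_{-1}=\mathbb{R}\times M^{\ell,\bar a}_{-1}$ with $\bar a\in\Delta_{k-2}$ obtained by deleting the $j$-th entry, so that $F^\ell_k|_{\{a_j=0\}}=F^\ell_{k-1}$; this is the induction on cell dimension, but only the weaker statement $F^\ell_k(a)_j=0$ is needed here.) To conclude, I would use the straight-line homotopy $H(a,s)=(1-s)F^\ell_k(a)+s\,a$, which lands in $\Delta_{k-1}$ by convexity, interpolates between $F^\ell_k$ and the identity, and — since $a_j=0$ forces $F^\ell_k(a)_j=0$ and hence $H(a,s)_j=0$ — maps $\partial\Delta_{k-1}\times[0,1]$ into $\partial\Delta_{k-1}$. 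Thus $F^\ell_k|_{\partial\Delta_{k-1}}$ is homotopic within $\partial\Delta_{k-1}\cong S^{k-2}$ to the identity, hence has degree one and is onto $\partial\Delta_{k-1}$. Since the image of $F^\ell_k$ is compact, it suffices to rule out an omitted interior point $p\in\mathrm{Int}(\Delta_{k-1})$; but then $F^\ell_k$ would factor through $\Delta_{k-1}\setminus\{p\}$, which deformation retracts onto $\partial\Delta_{k-1}$, and composing with the retraction would make $F^\ell_k|_{\partial\Delta_{k-1}}$ null-homotopic, contradicting degree one. (For $k=2$ this last step is just the intermediate value theorem for $a\mapsto F^\ell_2(a)_1$ on the segment $\Delta_1$, which equals $1$ at $e_1$ and $0$ at $e_2$.) Hence $F^\ell_k$ is surjective.

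\emph{Main obstacle.} The hard part is entirely analytic and concerns the degenerate boundary $\partial\Delta_{k-1}$: one must verify that $M^{\ell,a}_{-1}$ depends continuously on $a$ up to the boundary in a topology strong enough to control the widths, that $w_j^\ell(a)\to\infty$ as $a_j\to0$, and that the extinction and entropy normalizations are compatible with the product splitting of degenerate ellipsoids. Once these are in place, the topological conclusion — continuity together with ``boundary maps to boundary'' — is routine.
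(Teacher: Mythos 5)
Your proof is correct, and its overall skeleton (continuity, plus ``boundary maps to boundary'', plus algebraic topology) matches the paper's; the interesting difference is in how you execute the two halves. For continuity, the paper does not argue via continuous dependence on (possibly noncompact, degenerate) initial data: it takes a sequence $a_i\to a$, uses the global convergence theorem for noncollapsed flows to extract a limit of $M^{\ell,a_i}_t$, and identifies it with $M^{\ell,a}_t$ by uniqueness of the flow; this compactness-plus-uniqueness route is the cleaner way to handle the degenerate boundary, where your appeal to continuous dependence for noncompact initial hypersurfaces is less standard (and your citation of \eqref{eq_t_lim}--\eqref{eq_tp_lim} is slightly off target: those are $\ell\to\infty$ statements, whereas what you need, and the paper establishes along the way, is continuity in $a$ of $t_{\ell,a}$, $t'_{\ell,a}$, $\lambda_{\ell,a}$ at fixed $\ell$). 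For surjectivity, the paper uses the stronger facewise identity $F^\ell_k|_{\{a_j=0\}}=F^\ell_{k-1}$ and induction on the cell dimension together with the no-retraction fact; you instead use only the weaker statement $a_j=0\Rightarrow F^\ell_k(a)_j=0$, the straight-line homotopy of $F^\ell_k|_{\partial\Delta_{k-1}}$ to the identity inside $\partial\Delta_{k-1}$, a degree argument to exclude omitted interior points, and compactness of the image to pick up the boundary. Your version is more self-contained and avoids the induction (and hence the bookkeeping of the dimension drop in $n$), at the cost of a slightly longer topological argument; the paper's induction buys the sharper statement that each face is mapped onto itself, which your route recovers only a posteriori. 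Your handling of well-definedness (finite width via the slab barrier) and of the width blow-up as $a_j\to 0$ via convexity and local convergence are correct and fill in points the paper leaves implicit.
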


\begin{proof}
 We first check continuity. Let $a^{i}\in \Delta_{k-1}$ be a sequence that converges to $a\in \Delta_{k-1}$. Then, clearly $E^{\ell,a_i}\to E^{\ell,a}$, and arguing similarly as above we also see that $t_{\ell,a^{i}}\to t_{\ell,a}$ and $\lambda_{\ell,a_i}\to \lambda_{\ell,a}$. Now, by the global convergence theorem \cite{HaslhoferKleiner_meanconvex} there is a subsequence such that $M^{\ell, a_{i}}_{t}$ converges to some noncollasped limit. By the above and by uniqueness of mean curvature flow this limit must be equal to $M^{\ell, a}_{t}$. Hence, it follows that $F^{\ell}_k(a_i)\to F^{\ell}_k(a)$.
  
 Now, if  $a\in \partial \Delta_{k-1}$ is such that its $j$-th component $a_{j}$ vanishes, then then $E^{\ell,a}$ can be expressed as the product of an $\mathbb{R}$-factor in $x_j$-direction and an ellipsoid in one dimension lower. Hence, $w_{j}^\ell(a)^{-1}=0$ and consequently $F^{\ell}_k(\partial \Delta_{k-1})\subseteq \partial \Delta_{k-1}$. Moreover, it also follows that the map $F^{\ell}_k$ restricted to the face $\{a_j=0\}\subseteq\Delta_{k-1}$ agrees with the map $F^{\ell}_{k-1}$ (here the dimension $n$, which we suppressed in the notation,  also decreases by one), namely
 \begin{equation}
 F^{\ell}_k(a_1,\ldots, a_{j-1},0,a_{j+1},\ldots, a_k) =  F^{\ell}_{k-1}(a_1,\ldots, a_{j-1},a_{j+1},\ldots, a_k)\, .
 \end{equation}
 Consequently, using induction on $k$ and the topological fact that a disc cannot be retracted to its boundary, we conclude that $F^{\ell}_k$ is surjective.
\end{proof}

Continuing the proof of the theorem, given $\mu_i\to \mu\in \textrm{Int}(\Delta_{k-1})$ and $\ell_i\to \infty$, by the above we can find $a_i \in \textrm{Int}(\Delta_{k-1})$ so that $F^{\ell_i}_k(a_i)=\mu_i$. Now, by the global convergence theorem \cite{HaslhoferKleiner_meanconvex} theorem, after passing to a subsequence $M^{\ell_i,a_i}_t$ converges to an ancient noncollapsed uniformly $(k+1)$-convex mean curvature flow $\{M_t\}_{t<0}$ that satisfies
\begin{equation}
    \frac{(\max_{x\in M_{-1}}|x_{j}|)^{-1}}{\sum_{j'=1}^{k}(\max_{x\in M_{-1}}|x_{j'}|)^{-1}}=\mu_{j}\quad \text{for}\,\, 1\leq j\leq k.
\end{equation}
This proves the existence with prescribed reciprocal width ratio.

Finally, let us verify the additional properties. We have already addressed noncollapsing, uniform $(k+1)$-convexity and the symmetries. Furthermore, it is also clear by construction that any $M_t\in \mathcal{A}^\circ$ becomes extinct at the origin at time $0$ and satisfies
\begin{equation}\label{withcond}
\int_{M^{\ell,a}_{-1}} \frac{1}{(4\pi)^{n/2}}e^{-\frac{|x|^2}{4}}=\frac{\sigma_{n-k}+\sigma_{n-k+1}}{2}.
\end{equation}
By \cite{HaslhoferKleiner_meanconvex}, any tangent flow at $-\infty$ must be a generalized cylinder, and together with \eqref{withcond} and the symmetries it follows that
\begin{equation}
\lim_{\lambda\to 0}\lambda M_{\lambda^{-2}t} = \mathbb{R}^k\times S^{n-k}(\sqrt{2(n-k)|t|}).
\end{equation}
This concludes the proof of Theorem \ref{ovalfamily}.
\end{proof}

\bibliography{ovals_revised}

\newcommand{\noopsort}[1]{} \newcommand{\singleletter}[1]{#1}
\begin{thebibliography}{HIMW19}

\bibitem[ADS19]{ADS1}
S.~Angenent, P.~Daskalopoulos, and N.~Sesum.
\newblock Unique asymptotics of ancient convex mean curvature flow solutions.
\newblock {\em J. Differential Geom.}, 111(3):381--455, 2019.

\bibitem[ADS20]{ADS2}
S.~Angenent, P.~Daskalopoulos, and N.~Sesum.
\newblock Uniqueness of two-convex closed ancient solutions to the mean
  curvature flow.
\newblock {\em Ann. of Math. (2)}, 192(2):353--436, 2020.

\bibitem[And12]{Andrews_noncollapsing}
B.~Andrews.
\newblock Noncollapsing in mean-convex mean curvature flow.
\newblock {\em Geom. Topol.}, 16(3):1413--1418, 2012.

\bibitem[Ang13]{Angenent_formal}
S.~Angenent.
\newblock Formal asymptotic expansions for symmetric ancient ovals in mean
  curvature flow.
\newblock {\em Netw. Heterog. Media}, 8(1):1--8, 2013.

\bibitem[BC]{BC2}
S.~Brendle and K.~Choi.
\newblock Uniqueness of convex ancient solutions to mean curvature flow in
  higher dimensions.
\newblock {\em Geom. Top. (to appear)}.

\bibitem[BC19]{BC1}
S.~Brendle and K.~Choi.
\newblock Uniqueness of convex ancient solutions to mean curvature flow in
  {$\Bbb R^3$}.
\newblock {\em Invent. Math.}, 217(1):35--76, 2019.

\bibitem[BH16]{BH_surgery}
S.~Brendle and G.~Huisken.
\newblock Mean curvature flow with surgery of mean convex surfaces in {$\Bbb
  R^3$}.
\newblock {\em Invent. Math.}, 203(2):615--654, 2016.

\bibitem[BLT17]{BLT1}
T.~Bourni, M~Langford, and G.~Tinaglia.
\newblock A collapsing ancient solution of mean curvature flow in
  $\mathbb{R}^3$.
\newblock {\em arXiv:1705.06981}, 2017.

\bibitem[BLT20]{BLT2}
T.~Bourni, M~Langford, and G.~Tinaglia.
\newblock The atomic structure of ancient grain boundaries.
\newblock {\em arXiv:2006.16338}, 2020.

\bibitem[CHH]{CHH}
K.~Choi, R.~Haslhofer, and O.~Hershkovits.
\newblock Ancient low entropy flows, mean convex neighborhoods, and uniqueness.
\newblock {\em Acta Math. (to appear)}.

\bibitem[CHH21a]{CHH_ovals}
B.~Choi, R.~Haslhofer, and O.~Hershkovits.
\newblock A note on the selfsimilarity of limit flows.
\newblock {\em Proc. Amer. Math. Soc.}, 149(3):1239--1245, 2021.

\bibitem[CHH21b]{CHH_wing}
K.~Choi, R.~Haslhofer, and O.~Hershkovits.
\newblock A non-existence result for wing-like mean curvature flows in
  $\mathbb{R}^4$.
\newblock {\em arXiv:2105.13100}, 2021.

\bibitem[CHHW]{CHHW}
K.~Choi, R.~Haslhofer, O.~Hershkovits, and B.~White.
\newblock Ancient asymptotically cylindrical flows and applications.
\newblock {\em Invent. Math. (to appear)}.

\bibitem[CM16]{CM_arrival}
T.~Colding and W.~Minicozzi.
\newblock Differentiability of the arrival time.
\newblock {\em Comm. Pure Appl. Math.}, 69(12):2349--2363, 2016.

\bibitem[DH21]{DH}
W.~Du and R.~Haslhofer.
\newblock The blowdown of ancient noncollapsed mean curvature flows.
\newblock {\em arXiv:2106.04042}, 2021.

\bibitem[Eck00]{Ecker_logsob}
K.~Ecker.
\newblock Logarithmic {S}obolev inequalities on submanifolds of {E}uclidean
  space.
\newblock {\em J. Reine Angew. Math.}, 522:105--118, 2000.

\bibitem[Ham95]{Hamilton_Harnack}
R.~Hamilton.
\newblock Harnack estimate for the mean curvature flow.
\newblock {\em J. Differential Geom.}, 41(1):215--226, 1995.

\bibitem[HH16]{HaslhoferHershkovits_ancient}
R.~Haslhofer and O.~Hershkovits.
\newblock Ancient solutions of the mean curvature flow.
\newblock {\em Comm. Anal. Geom.}, 24(3):593--604, 2016.

\bibitem[HIMW19]{HIMW}
D.~Hoffman, T.~Ilmanen, F.~Mart\'{\i}n, and B.~White.
\newblock Graphical translators for mean curvature flow.
\newblock {\em Calc. Var. Partial Differential Equations}, 58(4):Art. 117, 29,
  2019.

\bibitem[HK17a]{HaslhoferKleiner_meanconvex}
R.~Haslhofer and B.~Kleiner.
\newblock Mean curvature flow of mean convex hypersurfaces.
\newblock {\em Comm. Pure Appl. Math.}, 70(3):511--546, 2017.

\bibitem[HK17b]{HaslhoferKleiner_surgery}
R.~Haslhofer and B.~Kleiner.
\newblock Mean curvature flow with surgery.
\newblock {\em Duke Math. J.}, 166(9):1591--1626, 2017.

\bibitem[HS09]{HuiskenSinestrari_surgery}
G.~Huisken and C.~Sinestrari.
\newblock Mean curvature flow with surgeries of two-convex hypersurfaces.
\newblock {\em Invent. Math.}, 175(1):137--221, 2009.

\bibitem[Hui84]{Huisken_convex}
G.~Huisken.
\newblock Flow by mean curvature of convex surfaces into spheres.
\newblock {\em J. Differential Geom.}, 20(1):237--266, 1984.

\bibitem[Hui90]{Huisken_monotonicity}
G.~Huisken.
\newblock Asymptotic behavior for singularities of the mean curvature flow.
\newblock {\em J. Differential Geom.}, 31(1):285--299, 1990.

\bibitem[SW09]{ShengWang}
W.~Sheng and X.~Wang.
\newblock Singularity profile in the mean curvature flow.
\newblock {\em Methods Appl. Anal.}, 16(2):139--155, 2009.

\bibitem[Whi00]{White_size}
B.~White.
\newblock The size of the singular set in mean curvature flow of mean convex
  sets.
\newblock {\em J. Amer. Math. Soc.}, 13(3):665--695, 2000.

\bibitem[Whi03]{White_nature}
B.~White.
\newblock The nature of singularities in mean curvature flow of mean-convex
  sets.
\newblock {\em J. Amer. Math. Soc.}, 16(1):123--138, 2003.

\bibitem[Zhu20]{Zhu}
J.~Zhu.
\newblock $\mathrm{SO}(2)$ symmetry of the translating solitons of the mean
  curvature flow in $\mathbb{R}^4$.
\newblock {\em arXiv:2012.09319}, 2020.

\end{thebibliography}

\bibliographystyle{alpha}

\vspace{10mm}

{\sc Wenkui du, Department of Mathematics, University of Toronto,  40 St George Street, Toronto, ON M5S 2E4, Canada}

{\sc Robert Haslhofer, Department of Mathematics, University of Toronto,  40 St George Street, Toronto, ON M5S 2E4, Canada}

\emph{E-mail:} wenkui.du@mail.utoronto.ca, roberth@math.toronto.edu

\end{document}